\newcommand{\beqq}{\begin{equation}}
\newcommand{\eeqq}{\end{equation}}
\newcommand{\beao}{\begin{eqnarray*}}
\newcommand{\eeao}{\end{eqnarray*}\noindent}
\newcommand{\beam}{\begin{eqnarray}}
\newcommand{\eeam}{\end{eqnarray}\noindent}
\newcommand{\bbr}{\mathbb{R}}
\newcommand{\bbn}{\mathbb{N}}
\newcommand{\eqd}{\stackrel{\mathscr{D}}{=}}
\newcommand{\limw}{\stackrel{weak}{\rightarrow}}
\newcommand{\limas}{\stackrel{\rm a.s.}{\rightarrow}}
\newcommand{\nto}{{n\to\infty}}
\newcommand{\tto}{{t\to\infty}}
\newcommand{\limn}{\stackrel{\nto}{\rightarrow}}
\newcommand{\cala}{{\mathcal{A}}}
\newcommand{\calb}{{\mathcal{B}}}
\newcommand{\calc}{{\mathcal{C}}}
\newcommand{\calf}{\mathcal{F}}
\newcommand{\calg}{\mathcal{G}}
\newcommand{\calm}{\mathcal{M}}
\newcommand{\caln}{{\mathcal{N}}}
\newcommand{\cals}{{\mathcal{S}}}
\newcommand{\calu}{{\mathcal{U}}}
\newcommand{\calw}{\mathcal{W}}
\newcommand{\calz}{{\mathcal{Z}}}
\newcommand{\calbm}{\mathcal{BM}}
\newcommand{\calwp}{\mathcal{WP}}
\newcommand{\caloup}{\mathcal{OUP}}
\newcommand{\al}{{\alpha}}
\newcommand{\cov}{{\mathrm{cov}}}
\newcommand{\rmt}{\mathrm{T}}
\newcommand{\etr}{\mathrm{etr}}
\newcommand{\tr}{\mathrm{tr}}
\newcommand{\diag}{\mathrm{diag}}
\newcommand{\rmbesq}{\mathrm{BESQ}}
\newcommand{\ov}{\overline}
\newcommand{\wh}{\widehat}
\newcommand{\wt}{\widetilde}
\newcommand{\fa}{\forall\,}
\newtheorem{Theorem}{Theorem}[chapter]
\newtheorem{Corollary}[Theorem]{Corollary}
\newtheorem{Lemma}[Theorem]{Lemma}
\newtheorem{Definition}[Theorem]{Definition}
\newtheorem{Example}{Example}
\newtheorem{Remark}[Theorem]{Remark}
\newtheorem{Assumption}[Theorem]{Assumption}
\begin{document}

\begin{titlepage}
\vspace*{2cm}
\pagestyle{empty}
\begin{center}
 \bf\huge Wishart Processes 
\end{center}
\vspace{3cm}
\begin{center}
 \LARGE OLIVER PFAFFEL
\end{center}
\vspace{3cm}
\begin{center}
\LARGE Student research project \\
supervised by Claudia Klüppelberg and Robert Stelzer
\end{center}
\vspace{1.5cm}
\begin{center}
 \LARGE September 8, 2008
\end{center}
\vspace{3cm}
\begin{center}
\includegraphics{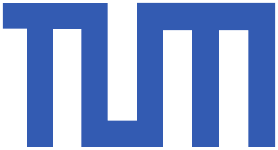}
\end{center}
\begin{center}
\LARGE{Technische Universit\"at M\"unchen\\
Fakult\"at f\"ur Mathematik}
\end{center}
\end{titlepage}

% \pagestyle{empty}       % keine Seitennummer
% 
% \begin{center}
% 
% {\Huge Technische Universität München} 
% \\
% \vspace*{1.5cm}
% {\huge \sc{Department of Mathematics}} 
% \\
% \vspace*{4cm}
% {\Huge {\bf Wishart Processes }}
% \\
% \vspace*{4cm}
% {\Large Project thesis}\linebreak \\ 
% {\Large by}\linebreak \\
% {\Large Oliver Pfaffel}\\
% \vspace*{4cm}
% {\Large 
% \begin{tabular}{ll}
% Themensteller/in: & Prof. Dr. Claudia Klüppelberg  \\
% Betreuer/in: & Dr. Robert Stelzer \\
% Abgabetermin: &  08. September 2008
% \end{tabular}
% }
% 
% \end{center}

\cleardoublepage

\pagenumbering{roman} 
\pagestyle{headings} 

\tableofcontents

\chapter{Introduction}  \pagenumbering{arabic}\setcounter{page}{1} 

In this thesis we consider a matrix variate extension of the Cox-Ingersoll-Ross process (see \citet{cox}), i.e. a solution of a one-dimensional stochastic differential equation of the form
\beqq
dS_t=\sigma\sqrt{S_t}\,dB_t+a(b-S_t)\,dt, \quad S_0=s_0\in\bbr
\label{intro_cir}\eeqq
with positive numbers $a,b,\sigma$ and a one-dimensional Brownian motion $B$. Our extension is defined by a solution of the $p\times p$-dimensional stochastic differential equation of the form
\beqq
dS_t=\sqrt{S_t}\,dB_t\,Q+Q^\rmt dB_t^\rmt \sqrt{S_t}+(S_t K+K^\rmt S_t+ \al Q^\rmt Q)\,dt, \quad S_0=s_0\in\calm_p(\bbr)
\label{intro_wp}\eeqq
where $Q$ and $K$ are real valued $p\times p$-matrices, $\al$ a non-negative number and $B$ a $p\times p$-dimensional Brownian motion (that is, a matrix of $p^2$ independent one-dimensional Brownian motions).\\
While it is well-known that solutions of (\ref{intro_cir}), called CIR processes, always exist, the situation for (\ref{intro_wp}) is more difficult. As we will see in this thesis, it is crucial to choose the parameter $\al$ in the right way, to guarantee the (strong) existence of unique solutions of (\ref{intro_wp}), that are then called Wishart processes. We will derive that it is sufficient to choose the parameter $\alpha$ larger or equal to $p+1$. That is similar to a result given by \citet{bru}.

The characteristic fact of (\ref{intro_cir}) is that this process remains positive for a certain choice of $b$. This makes it suitable for modeling for example an interest rate, which should always be positive because of economic reasons. Hence, this is an approach for pricing bonds (see the section 5.1). If we want to consider some (corporate) bonds jointly, e.g. because they are correlated, the need for a multidimensional extension comes up. See section 5.3 of this thesis or \citet[3.5.2.]{gou} for a discussion of this topic. For the Wishart processes, we have in the case $\al\geq p+1$ that the unique solution of (\ref{intro_wp}) remains positive definite for all times.\\
Another well-known fact is that the conditional(on $s_0$) distribution of the CIR process at a certain point in time is noncentral chi-square. We will see that the conditional distribution of the Wishart process $S_t$ at time $t$ is a matrix variate extension of the noncentral chi-square distribution, that is called noncentral Wishart distribution.
\vspace{0.5cm}\\
An application for matrix variate stochastic processes can be found in \citet{fonseca}, which model the dynamics of a $p$ risky assets $X$ by
\beqq dX_t=\diag(X_t)[(r\mathbf{1}+\lambda_t)\,dt+\sqrt{S_t}\,dW_t] \label{eq_fonseca}\eeqq
where $r$ is a positive number, $\mathbf{1}=(1,\ldots,1)\in\bbr^p$, $\lambda_t$ a $p$-dimensional stochastic process, interpreted as the risk premium, and $Z$ a $p$-dimensional Brownian motion. The volatility process $S$ is the Wishart process of (\ref{intro_wp}).\\

In chapter 2, we introduce some notations and give the necessary background for understanding the following chapters. In chapter 3, a review of fundamental terms and results of matrix variate stochastics and the theory of stochastic differential equations is given, and in section 3.5 some results are derived concerning a matrix variate extension of the Ornstein-Uhlenbeck processes. The main work on the theory of Wishart processes will be done in chapter 4, where we give a general theorem about the existence and uniqueness of Wishart processes in section 4.2. In section 4.3, we show that some soutions of (\ref{intro_wp}) can be expressed in terms of the matrix variate Ornstein-Uhlenbeck process from section 3.5 and in section 4.4 we give an algorithm to simulate  Wishart processes. Finally, we give an outlook on the applications of Wishart processes in mathematical finance in chapter 5.
\vspace{0.5cm}\\
For further readings about Wishart processes, one could have a look at the paper of \citet{bru} and for more financial applications, one could consider \citet{gou} or \citet{fonseca}, for example.

\chapter{Preliminaries}

In this section we summarize the technical prerequisites which are necessary for matrix variate stochastics.

%%%%%%%%%%%%%%%%%%%%%%%%%%%%%%%%%%%%%%%%
\section{Matrix Algebra}

\begin{Definition}~\begin{enumerate}
\item Denote by $\calm_{m,n}(\bbr)$ the set of all $m\times n$ matrices with entries in $\bbr$.\\
 If $m=n$, we write $\calm_{n}(\bbr)$ instead.
\item Write $GL(p)$ for the group of all invertible elements of $\calm_{p}(\bbr)$.
\item Let $\cals_p$ denote the linear subspace of all symmetric matrices of $\calm_p(\bbr)$.
\item Let $\cals_p^{+}$ ($\cals_p^{-}$) denote the set of all symmetric positive (negative) definite matrices of  $\calm_{p}(\bbr)$.
\item Denote by $\ov{\cals_p^+}$ the closure of $\cals_p^+$ in $\calm_{p}(\bbr)$, that is the set of all symmetric positive semidefinite matrices of $\calm_{p}(\bbr)$.
\end{enumerate}\end{Definition}

Let us review some characteristics of positive (semi)definite matrices.

\begin{Theorem}[Positive definite matrices]\ 
\begin{enumerate}
\item $A\in\cals_p^{+}$ if and only if $x^\rmt Ax > 0 \; \fa x\in\bbr^p: x\neq 0 $
\item $A\in\cals_p^{+}$ if and only if $x^\rmt Ax > 0 \; \fa x\in\bbr^p: ||x||=1$
\item $A\in\cals_p^{+}$ if and only if A is orthogonally diagonalizable with positive eigenvalues, i.e. there exists an orthogonal matrix $U\in\calm_p(\bbr)$, $U U^\rmt=I_p$, such that $A=UDU^\rmt$ with a diagonal matrix  $D=\diag(\lambda_1,\ldots,\lambda_p)$, where $\lambda_i>0$, $i=1,\ldots,p$, are the positive eigenvalues of $A$
\item If $A\in\cals_p^{+}$ then $A^{-1}\in\cals_p^{+}$
\item Let $A\in\cals_p^{+}$ and $B\in\calm_{q,p}(\bbr)$ with $q\leq p$ and rank $r$. Then $BAB^\rmt\in\ov{\cals_q^+}$ and, if B has full rank, i.e. $r=q$, then $BAB^\rmt$ is even positive definite, $BAB^\rmt\in\cals_q^{+}$.
\item $A^\rmt A\in\cals_p^+$ for all $A\in GL(p)$
\item $A\in\ov{\cals_p^+}$ if and only if $x^\rmt Ax \geq 0 \; \fa x\in\bbr^p: x\neq 0 $
\item $A\in\ov{\cals_p^+}$ if and only if A is orthogonally diagonalizable with non-negative eigenvalues
\item $M^\rmt M \in\ov{\cals_p^+}$ for all $M\in\calm_p(\bbr)$
\end{enumerate}
\label{spd}\end{Theorem}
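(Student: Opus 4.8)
The plan is to prove each of the nine items by reducing to the most basic characterization (item (i)) and to the spectral theorem for real symmetric matrices, which I will take as known. I would organize the argument so that the early items do the heavy lifting and the later ones follow almost immediately.

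First I would dispatch items (i) and (ii) together: (i) is essentially the definition of positive definiteness that I will adopt, and (ii) follows from (i) by homogeneity, since for $x\neq 0$ one has $x^\rmt Ax = \|x\|^2\,(x/\|x\|)^\rmt A(x/\|x\|)$, so positivity on the unit sphere is equivalent to positivity on all nonzero vectors. Next I would establish (iii), the spectral characterization: a real symmetric $A$ is orthogonally diagonalizable, $A=UDU^\rmt$ with $D=\diag(\lambda_1,\ldots,\lambda_p)$, by the spectral theorem; then for $x\neq 0$, writing $y=U^\rmt x$ (so $y\neq 0$), we get $x^\rmt Ax=y^\rmt D y=\sum_i \lambda_i y_i^2$, which is $>0$ for all such $y$ if and only if every $\lambda_i>0$. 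This also shows item (viii) by the same computation with strict inequalities replaced by $\geq$, and item (vii) for $\ov{\cals_p^+}$ by the homogeneity argument used for (i)/(ii).

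With (iii) in hand, item (iv) is immediate: if $A=UDU^\rmt\in\cals_p^+$ then $A^{-1}=UD^{-1}U^\rmt$ with $D^{-1}=\diag(\lambda_1^{-1},\ldots,\lambda_p^{-1})$ and all $\lambda_i^{-1}>0$, so $A^{-1}\in\cals_p^+$ by (iii) again. For item (v), given $A\in\cals_p^+$ and $B\in\calm_{q,p}(\bbr)$, the matrix $BAB^\rmt$ is symmetric, and for $x\in\bbr^q$ we have $x^\rmt(BAB^\rmt)x=(B^\rmt x)^\rmt A(B^\rmt x)\geq 0$ with equality only if $B^\rmt x=0$ (using (i) applied to $A$); hence $BAB^\rmt\in\ov{\cals_q^+}$ always, and if $B$ has full rank $q$ then $B^\rmt$ has trivial kernel, so $B^\rmt x=0$ forces $x=0$, giving strict positivity, i.e. $BAB^\rmt\in\cals_q^+$. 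Items (vi) and (ix) are the special cases $A=I_p$ of a cognate statement: $M^\rmt M$ is symmetric and $x^\rmt M^\rmt M x=\|Mx\|^2\geq 0$, which is $>0$ for $x\neq 0$ exactly when $M$ is injective, i.e. when $M\in GL(p)$; this yields (vi) and (ix) directly.

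I do not expect a genuine obstacle here, since everything is standard linear algebra; the only point requiring care is the clean logical structure, namely proving (iii) first (via the spectral theorem) and then harvesting (iv), (vi), (viii), (ix) as corollaries, and proving (v) directly from (i) with attention to the rank/kernel bookkeeping that distinguishes the semidefinite from the definite conclusion. One could alternatively derive (vi) and (ix) as instances of (v) with $A=I_p$ and $B=M^\rmt$, but the direct $\|Mx\|^2$ argument is shorter and I would present that.
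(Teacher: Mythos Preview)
Your proof sketch is correct and complete; all the arguments are standard and sound. The paper itself does not give a proof of this theorem at all: it simply refers the reader to standard linear algebra references (Muirhead, Appendix A8, and Fischer), so your write-up is in fact more detailed than what the paper provides.
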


\begin{proof} See \citet[Appendix A8]{head} or \citet{fischer}. \end{proof}

On $\ov{\cals_p^+}$ we are able to define a matrix valued square root function by

\begin{Definition}[Square root of positive semidefinite matrices]\ \\
Let $A\in\ov{\cals_p^+}$. According to Theorem \ref{spd} there exists an orthogonal matrix $U\in\calm_p(\bbr)$, $U U^\rmt=I_p$, such that $A=UDU^\rmt$ with $D=\diag(\lambda_1,\ldots,\lambda_p)$, where $\lambda_i\geq 0$, $i=1,\ldots,p$.\\
Then we define the square root of $A$ by $\sqrt{A}=U\mathrm{diag}(\sqrt{\lambda_1},\ldots,\sqrt{\lambda_p})U^\rmt$, what is a positive semidefinite matrix, too.
\end{Definition}

The square root $\sqrt{A}$ is well-defined and independent of $U$, as can be seen in \citet{fischer}, for example.

\begin{Remark} If $A\in\cals_p^{+}$ then $\sqrt{A}\in\cals_p^{+}$. \end{Remark}

Now, we talk about the differentiation of matrix valued functions.

\begin{Definition}
Let $S\in\calm_p(\bbr)$. We define the differential operator
$D=(\frac{\partial}{\partial S_{ij}})_{i,j}$ for all real valued, differentiable functions $f:\calm_p(\bbr)\rightarrow\bbr$ as the matrix of all partial derivations $\frac{\partial}{\partial S_{ij}}f(S)$ of $f(S)$.
\end{Definition}

The following calculation rules are going to be helpful in the next chapters, so we state them here. In order not to lengthen this chapter, we only give outlines of the proofs.

\begin{Lemma}[Calculation rules for determinants]\ \\
For all $A,B\in \calm_p(\bbr)$, $S\in GL(p)$ and $H_t:\bbr\rightarrow GL(p)$ differentiable it holds that
\begin{enumerate}
\item $\det(AB)=\det(A)\det(B)$ and $\det(\al A)=\al^p \det(A) \quad\fa\al\in\bbr$.
\item If $A\in GL(p)$ or $B\in GL(p)$ then $\det(I_p+AB)=\det(I_p+BA)$
\item $\frac{d}{dt}\det(H_t)=\det(H_t) \tr(H_t^{-1}\frac{d}{dt}H_t)$
\item $D(\det(S))= \det(S) (S^{-1})^\rmt$
\item $\det(A)$ is the product of the eigenvalues of $A$.
\end{enumerate}
If $S$ is furthermore symmetric then
\begin{enumerate}
\setcounter{enumi}{5}
\item $D(\det(S))= \det(S) S^{-1}$
\item $\frac{\partial^2}{\partial S_{ij}\,\partial S_{kl}}(\det(S))=\det(S)[ (S^{-1})_{kl}(S^{-1})_{ij}-(S^{-1})_{ik}(S^{-1})_{lj}]$ \\
where $(S^{-1})_{ij}$ denotes the $i,j$-th entry of $S^{-1}$
\end{enumerate}
\label{lemma_calcrulesdet}\end{Lemma}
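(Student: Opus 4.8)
The plan is to establish the determinant identities one at a time, relying on the standard linear-algebra facts collected in Theorem \ref{spd}. For (i), the multiplicativity $\det(AB)=\det(A)\det(B)$ is the classical Cauchy–Binet identity (a consequence of the fact that $\det$ is the unique alternating multilinear form on the rows, normalized by $\det(I_p)=1$), and $\det(\al A)=\al^p\det(A)$ follows by pulling the scalar $\al$ out of each of the $p$ rows. For (v), one uses the Jordan (or Schur) form: over $\bbc$ every $A\in\calm_p(\bbr)$ is similar to an upper-triangular matrix whose diagonal entries are the eigenvalues of $A$, and since $\det$ is similarity-invariant by (i), $\det(A)$ equals the product of those diagonal entries. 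I would do (v) early because it is used implicitly elsewhere.

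For (ii), the cleanest argument is to write, for invertible $A$,
\beao
\det(I_p+AB)=\det(A)\det(A^{-1}+B)=\det(A^{-1}+B)\det(A)=\det(I_p+BA),
\eeao
using (i) twice and commutativity of scalar multiplication; the case $B\in GL(p)$ is symmetric. For (iii), I would differentiate using Jacobi's formula: writing $H_t^{-1}\frac{d}{dt}H_t$ and expanding $\det(H_{t+h})=\det(H_t)\det(I_p+H_t^{-1}(H_{t+h}-H_t))$, then applying the first-order expansion $\det(I_p+\varepsilon M)=1+\varepsilon\tr(M)+o(\varepsilon)$ (which itself comes from (v), since the eigenvalues of $I_p+\varepsilon M$ are $1+\varepsilon\mu_i+o(\varepsilon)$ and their product is $1+\varepsilon\sum_i\mu_i+o(\varepsilon)$). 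This yields $\frac{d}{dt}\det(H_t)=\det(H_t)\tr(H_t^{-1}\frac{d}{dt}H_t)$.

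For (iv), I would expand $\det(S)$ along a fixed row via the Laplace (cofactor) expansion $\det(S)=\sum_k S_{ik}C_{ik}$, where the cofactor $C_{ik}$ does not depend on any entry $S_{ij}$ of row $i$; hence $\frac{\partial}{\partial S_{ij}}\det(S)=C_{ij}$, and the cofactor matrix is $\det(S)(S^{-1})^\rmt$ by Cramer's rule. Statement (vi) is then immediate: if $S$ is symmetric then so is $S^{-1}$, so $(S^{-1})^\rmt=S^{-1}$. The one genuinely fussy point — and the step I expect to be the main obstacle — is (vii): here one must be careful that in the symmetric case the entries $S_{ij}$ and $S_{ji}$ are \emph{not} independent variables, so differentiating $D(\det(S))=\det(S)S^{-1}$ a second time requires the product rule together with the correct derivative of the inverse, namely $\frac{\partial}{\partial S_{kl}}(S^{-1})_{ij}=-(S^{-1})_{ik}(S^{-1})_{lj}$ in the symmetric convention (as opposed to the unsymmetrized $-(S^{-1})_{ik}(S^{-1})_{jl}$), and combining this with $\frac{\partial}{\partial S_{kl}}\det(S)=\det(S)(S^{-1})_{kl}$ from (vi). Carrying out this product rule,
\beao
\frac{\partial^2}{\partial S_{ij}\,\partial S_{kl}}\det(S)
=\det(S)(S^{-1})_{kl}(S^{-1})_{ij}+\det(S)\bigl(-(S^{-1})_{ik}(S^{-1})_{lj}\bigr),
\eeao
gives exactly the stated formula. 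As indicated in the preamble to the lemma, I would present only these outlines rather than the full bookkeeping.
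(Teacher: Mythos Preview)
Your outline is correct and matches the paper's own proof almost item by item: (i) and (v) are cited as standard, (ii) is reduced to (i), (iv) uses the cofactor/adjugate identity, (vi) specializes (iv), and (vii) is obtained from (vi) by the product rule together with $\frac{\partial}{\partial S_{kl}}S^{-1}=-S^{-1}(\frac{\partial}{\partial S_{kl}}S)S^{-1}$. The only deviation is (iii), where the paper differentiates the Laplace expansion and invokes $\mathrm{adj}(H_t)=\det(H_t)H_t^{-1}$ directly, while you instead factor $\det(H_{t+h})=\det(H_t)\det(I_p+H_t^{-1}(H_{t+h}-H_t))$ and use the first-order expansion $\det(I_p+\varepsilon M)=1+\varepsilon\,\tr(M)+o(\varepsilon)$; both are standard routes to Jacobi's formula and neither buys anything the other doesn't.
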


\begin{proof}
(i) and (v) can be found in every Linear Algebra book as e.g. in \citet{fischer},\\ (ii) follows from (i).\\
(iii) can be proven using the Laplace expansion and the property $\mathrm{adj}(A)=\det(A)A^{-1}$ for the adjugate matrix $\mathrm{adj}(A)$.\\
(iv) follows by using the Leibniz formula for determinants and again the adjugate property, (vi) is just a special case of (iv).\\
Finally, (vii) follows from (vi) using $\frac{\partial}{\partial S_{kl}}D(\det(S))=\det(S)[(S^{-1})_{kl}S^{-1}+\frac{\partial}{\partial S_{kl}}S^{-1}]$ and
$\frac{\partial}{\partial S_{kl}}S^{-1}=-S^{-1}(\frac{\partial}{\partial S_{kl}}S)S^{-1}$.
\end{proof}

\begin{Lemma}[Calculation rules for trace]
For all $A,B,S\in \calm_p(\bbr)$ it holds that
\begin{enumerate}
\item $\tr(AB)=\tr(BA)$ and $\tr(\al A + B)=\al \tr(A)+\tr(B)\quad\fa \al\in\bbr$
\item $\tr(A)$ is the sum of the eigenvalues of $A$.
%\item $\frac{d}{dt}\tr(H_t)=\tr(\frac{d}{dt}H_t)$
\end{enumerate}
\label{lemma_calcrulestr}\end{Lemma}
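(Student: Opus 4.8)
The plan is to prove both claims directly from the definition of the trace as the sum of the diagonal entries, $\tr(A)=\sum_{i=1}^p A_{ii}$.

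For part (i), I would first write out $\tr(AB)=\sum_{i=1}^p (AB)_{ii}=\sum_{i=1}^p\sum_{k=1}^p A_{ik}B_{ki}$ and observe that interchanging the two finite sums and relabelling indices gives $\sum_{k=1}^p\sum_{i=1}^p B_{ki}A_{ik}=\sum_{k=1}^p (BA)_{kk}=\tr(BA)$, which is the cyclic property. The linearity statement $\tr(\al A+B)=\al\tr(A)+\tr(B)$ then follows immediately, since $(\al A+B)_{ii}=\al A_{ii}+B_{ii}$ and finite sums are linear.

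For part (ii), I would pass to a triangular form. Over $\bbc$ every $A\in\calm_p(\bbr)\subseteq\calm_p(\bbc)$ is similar to an upper triangular matrix $T=U^{-1}AU$ (Schur decomposition, or, if one prefers, the Jordan normal form); in either case the diagonal entries of $T$ are exactly the eigenvalues $\lambda_1,\ldots,\lambda_p$ of $A$ listed with algebraic multiplicity. Using the cyclic property from part (i), $\tr(A)=\tr(UTU^{-1})=\tr(TU^{-1}U)=\tr(T)=\sum_{i=1}^p\lambda_i$, which is the assertion. Alternatively one can read off $\tr(A)$ as, up to sign, the coefficient of $\lambda^{p-1}$ in the characteristic polynomial $\det(\lambda I_p-A)=\prod_{i=1}^p(\lambda-\lambda_i)$ and compare coefficients.

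There is no genuine obstacle here; the only point that deserves a word of care is that in part (ii) the eigenvalues must be understood as the complex eigenvalues counted with multiplicity, since a real matrix need not have $p$ real eigenvalues — yet the trace, being a sum of real numbers, still equals this (necessarily real) sum. As with the preceding lemma I would keep the write-up to a brief sketch and refer to a standard linear algebra text such as \citet{fischer} for the triangularization results.
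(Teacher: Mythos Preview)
Your argument is correct, and ultimately both you and the paper defer to \citet{fischer}: the paper's entire proof is the one-line reference ``See \citet{fischer}'', whereas you spell out the elementary double-sum computation for (i) and the similarity-to-triangular argument for (ii) before pointing to the same source. There is nothing to correct; your version simply includes the standard details that the paper chose to omit.
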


\begin{proof}
See \citet{fischer}.
\end{proof}

\begin{Lemma}[Calculation rules for adjugate matrices]
For all $A\in GL(p)$ it holds that
\begin{enumerate}
\item $\mathrm{adj}(A)=\det(A)A^{-1}$
\item $\tr(\mathrm{adj}(A))=\det(A)\tr(A^{-1})$
\end{enumerate}
\end{Lemma}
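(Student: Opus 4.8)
The plan is to obtain (i) directly from the classical cofactor (Laplace-expansion) identity and then to get (ii) by taking the trace of (i).

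First I would recall that the adjugate $\mathrm{adj}(A)$ is by definition the transpose of the cofactor matrix of $A$, and that for \emph{every} $A\in\calm_p(\bbr)$ (invertible or not) one has the identity
\beqq
A\,\mathrm{adj}(A)=\mathrm{adj}(A)\,A=\det(A)\,I_p .
\label{eq_cofactor}\eeqq
This is merely a compact reformulation of the Laplace expansion of the determinant along a row (or column): the $(i,i)$-entry of $A\,\mathrm{adj}(A)$ is exactly the expansion of $\det(A)$ along row $i$, while for $i\neq k$ the corresponding entry equals the determinant of the matrix obtained from $A$ by overwriting row $k$ with row $i$, which has two equal rows and hence vanishes. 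I would simply cite \eqref{eq_cofactor} from a standard linear algebra reference such as \citet{fischer} rather than reproving it here.

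For (i), since $A\in GL(p)$ the matrix $A$ is invertible, equivalently $\det(A)\neq 0$. Dividing \eqref{eq_cofactor} by the nonzero scalar $\det(A)$ and multiplying on the left by $A^{-1}$ then gives $\mathrm{adj}(A)=\det(A)\,A^{-1}$. For (ii), I would apply the trace to both sides of (i) and use its linearity (Lemma~\ref{lemma_calcrulestr}(i)) to conclude $\tr(\mathrm{adj}(A))=\tr(\det(A)\,A^{-1})=\det(A)\,\tr(A^{-1})$.

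There is essentially no obstacle in this argument: the only non-trivial ingredient is the cofactor identity \eqref{eq_cofactor} itself, which is entirely standard and which I would invoke rather than establish; the remaining steps amount to a single scalar division and the linearity of the trace.
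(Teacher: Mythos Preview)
Your proof is correct and takes essentially the same approach as the paper: the paper simply refers to \citet{fischer} for (i) and notes that (ii) is a trivial consequence of (i), which is exactly what you do, with a little more detail on how (i) follows from the cofactor identity.
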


\begin{proof}
For (i) see \citet{fischer}, (ii) is a trivial consequence of (i).
\end{proof}

The next Definition gives us a one-to-one relationship between vectors and matrices. The idea is the following: Suppose we want to transfer a theorem that holds for multivariate stochastic processes to one that holds for matrix variate stochastic processes $S$. Then we can apply the theorem to $\mathrm{vec}(S)$ and, if necessary, apply $\mathrm{vec}^{-1}$ to the resulting `multivariate processes' to get the theorem in a matrix variate version.

\begin{Definition}
Let $A\in\calm_{m,n}(\bbr)$ with columns $a_i\in\bbr^m,\,i=1,\ldots,n$. Define the function $\mathrm{vec}:\calm_{m,n}(\bbr)\rightarrow\bbr^{mn}$ via
\[ \mathrm{vec}(A)= \left( \begin{array}{c} a_1 \\ \vdots \\ a_n \end{array} \right) \]
\label{vec}\end{Definition}

Sometimes, we will also consider $\mathrm{vec}(A)$ as an element of $\calm_{mn,1}(\bbr)$.

\begin{Remark}
\[vec(A^\rmt)= \left( \begin{array}{c} \tilde{a}_1^\rmt \\ \vdots \\ \tilde{a}_m^\rmt \end{array} \right) \]
where $\tilde{a}_j\in\bbr^n,\,j=1,\ldots,m$ denote the rows of $A$.
\end{Remark}

\begin{Lemma}(Cf. \citet{grupta}[Theorem 1.2.22])
\begin{enumerate}
\item For $A,B\in\calm_{m,n}(\bbr)$ it holds that $\tr(A^\rmt B)=\mathrm{vec}(A)^\rmt \mathrm{vec}(B)$
\item Let $A\in\calm_{p,m}(\bbr)$, $B\in\calm_{m,n}(\bbr)$ and $C\in\calm_{n,q}(\bbr)$. Then we have
\[ \mathrm{vec}(AXB)=(B^\rmt \otimes A)\mathrm{vec}(X) \]
\end{enumerate}
\label{lemma_trvec}\end{Lemma}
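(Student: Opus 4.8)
The plan is to prove both identities by a direct computation with matrix entries, reducing part (ii) to the rank-one case, which then follows immediately from the stacking convention of Definition \ref{vec} together with the definition of the Kronecker product.

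For (i) I would expand both sides in coordinates. Writing $A=(a_{ij})$ and $B=(b_{ij})$ with $i=1,\dots,m$ and $j=1,\dots,n$, we have $\tr(A^\rmt B)=\sum_{j=1}^{n}(A^\rmt B)_{jj}=\sum_{j=1}^{n}\sum_{i=1}^{m}a_{ij}b_{ij}$. On the other hand, since $\mathrm{vec}$ stacks the columns of a matrix, $\mathrm{vec}(A)^\rmt\mathrm{vec}(B)$ is exactly the sum of the products $a_{ij}b_{ij}$ over all index pairs $(i,j)$. The two expressions coincide, which proves (i); this is just the Frobenius (i.e.\ $\ell^2$) inner product on $\calm_{m,n}(\bbr)$ written two ways.

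For (ii), with $X$ taken of the size that makes $AXB$ well defined (so that $\mathrm{vec}(AXB)$ and $(B^\rmt\otimes A)\,\mathrm{vec}(X)$ live in the same space), I would argue block by block. Let $e_j$ denote the $j$-th standard basis vector of the appropriate dimension, $b_j=Be_j$ the $j$-th column of $B$, and $x_k=Xe_k$ the $k$-th column of $X$. The $j$-th column of $AXB$ equals $AXBe_j=AXb_j=\sum_{k}(b_j)_k\,Ax_k=\sum_{k}b_{kj}\,Ax_k$, so stacking over $j$ gives $\mathrm{vec}(AXB)$. By the definition of the Kronecker product, $B^\rmt\otimes A$ is the block matrix whose $(j,k)$ block is $(B^\rmt)_{jk}A=b_{kj}A$, hence the $j$-th block of $(B^\rmt\otimes A)\,\mathrm{vec}(X)$ is also $\sum_{k}b_{kj}\,Ax_k$. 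Comparing blocks yields the claim. Alternatively, and perhaps more cleanly, one writes $X=\sum_{k,l}X_{kl}\,e_ke_l^\rmt$, uses linearity to reduce to $X=e_ke_l^\rmt$, and observes that $\mathrm{vec}(uv^\rmt)=v\otimes u$ for any vectors $u,v$; then $AXB=(Ae_k)(B^\rmt e_l)^\rmt$ gives $\mathrm{vec}(AXB)=(B^\rmt e_l)\otimes(Ae_k)$, which equals $(B^\rmt\otimes A)(e_l\otimes e_k)$ by the mixed-product rule for $\otimes$.

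No genuine obstacle arises in either route; the whole content is elementary index bookkeeping. The one place to be careful is keeping the column-stacking convention of Definition \ref{vec} and the row/column conventions of the Kronecker product mutually consistent — in particular the transpose on $B$ — since a silent transposition there is the easiest way to end up with the two tensor factors in the wrong order.
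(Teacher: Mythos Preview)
Your proof is correct; both parts are handled by straightforward coordinate bookkeeping, and the care you take with the column-stacking convention and the placement of the transpose on $B$ is exactly the point where such computations can go wrong. The paper itself does not give a proof of this lemma --- it simply cites \citet[Theorem 1.2.22]{grupta} --- so there is nothing to compare against beyond noting that your direct argument supplies what the paper defers to the reference.
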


Before we continue, we introduce a new notation: For every linear operator $\cala$ on a finite dimensional space we denote by $\sigma(\cala)$ the \emph{spectrum} of $\cala$, that is the set of all eigenvalues of $\cala$.

\begin{Lemma}
Let $A\in\calm_p(\bbr)$ be a matrix such that $0\notin\sigma(A)+\sigma(A)$. Define the linear operator 
\[ \cala:\cals_p\rightarrow\cals_p,\,X\mapsto AX+XA^\rmt \]
Then the inverse of $\cala$ is given by 
\[ \cala^{-1}=\mathrm{vec}^{-1}\circ(I_p\otimes A + A\otimes I_p)^{-1}\circ\mathrm{vec} \]
\label{lemma_linop}\end{Lemma}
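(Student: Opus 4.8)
The plan is to transport everything to $\bbr^{p^2}$ via the operator $\mathrm{vec}$, recognise the vectorised map as the Kronecker sum $I_p\otimes A+A\otimes I_p$, and read off its invertibility from the spectral hypothesis. First I would extend $\cala$ to the linear map $\cala':\calm_p(\bbr)\to\calm_p(\bbr)$, $X\mapsto AX+XA^\rmt$. Applying Lemma~\ref{lemma_trvec}(ii) with $B=I_p$ gives $\mathrm{vec}(AX)=(I_p\otimes A)\mathrm{vec}(X)$ and $\mathrm{vec}(XA^\rmt)=(A\otimes I_p)\mathrm{vec}(X)$, hence
\beqq
\mathrm{vec}\circ\cala'=(I_p\otimes A+A\otimes I_p)\circ\mathrm{vec}.
\eeqq
Since $\mathrm{vec}$ is a linear isomorphism of $\calm_p(\bbr)$ onto $\bbr^{p^2}$, $\cala'$ is invertible if and only if $I_p\otimes A+A\otimes I_p$ is, and in that case $\cala'^{-1}=\mathrm{vec}^{-1}\circ(I_p\otimes A+A\otimes I_p)^{-1}\circ\mathrm{vec}$.

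Next I would determine $\sigma(I_p\otimes A+A\otimes I_p)$. Passing to $\bbc$ and using Schur's triangularisation, write $A=UTU^*$ with $U$ unitary and $T$ upper triangular carrying the eigenvalues $\lambda_1,\dots,\lambda_p$ of $A$ on its diagonal. Then $I_p\otimes A+A\otimes I_p=(U\otimes U)(I_p\otimes T+T\otimes I_p)(U\otimes U)^*$, and $I_p\otimes T+T\otimes I_p$ is upper triangular with diagonal entries $\lambda_i+\lambda_j$, $i,j=1,\dots,p$. Hence $\sigma(I_p\otimes A+A\otimes I_p)=\sigma(A)+\sigma(A)$, so the assumption $0\notin\sigma(A)+\sigma(A)$ says precisely that this operator, and therefore $\cala'$, is invertible, with the inverse displayed above.

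It remains to check that restricting from $\calm_p(\bbr)$ to $\cals_p$ changes nothing. For symmetric $X$ one has $(AX+XA^\rmt)^\rmt=AX+XA^\rmt$, so $\cala'(\cals_p)\subseteq\cals_p$ and $\cala=\cala'|_{\cals_p}$ really does map $\cals_p$ into $\cals_p$. Since $\cala'$ is injective, so is $\cala$; as $\cals_p$ is finite dimensional, $\cala$ is a bijection of $\cals_p$, whose inverse is the restriction of $\cala'^{-1}$ to $\cals_p$ — equivalently, $\cala'^{-1}$ itself preserves $\cals_p$, being the inverse of a bijection preserving it. This restriction is exactly $\mathrm{vec}^{-1}\circ(I_p\otimes A+A\otimes I_p)^{-1}\circ\mathrm{vec}$ acting on $\cals_p$, which is the claimed identity.

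The vectorisation in the first step is routine bookkeeping, and the symmetry argument in the last step is immediate. The only point that needs genuine care is the spectral identity $\sigma(I_p\otimes A+A\otimes I_p)=\sigma(A)+\sigma(A)$, which is where the hypothesis enters; establishing it cleanly (e.g.\ via the Schur triangularisation above, or by exhibiting eigenvectors $u\otimes v$ for commuting-enough cases and then a limiting/density argument) is the heart of the proof.
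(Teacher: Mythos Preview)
Your proof is correct and follows the same route the paper indicates --- vectorise via Lemma~\ref{lemma_trvec}(ii) and identify $\cala$ with the Kronecker sum $I_p\otimes A+A\otimes I_p$ --- except that the paper gives only a one-line pointer to \citet[p.~66]{stelzer} whereas you supply the details in full. Your Schur triangularisation argument for $\sigma(I_p\otimes A+A\otimes I_p)=\sigma(A)+\sigma(A)$ and the check that $\cals_p$ is preserved are exactly the missing pieces one would expect behind that reference.
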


\begin{proof} Can be shown with Lemma \ref{lemma_trvec} (ii), for details see \citet[p. 66]{stelzer} \end{proof}

For symmetric matrices there also exists another operator which transfers a matrix into a vector.

\begin{Definition}(Cf. \citet[Definition 1.2.7.]{grupta})
Let $S\in\cals_p$. Define the function $\mathrm{vech}:\cals_p\rightarrow\bbr^{\frac{p(p+1)}{2}}$ via
\[ \mathrm{vech}(S)= \left( \begin{array}{c} S_{11} \\ S_{12} \\ S_{22} \\ \vdots \\ S_{1p} \\ \vdots \\ S_{pp} \end{array} \right) \]
such that $\mathrm{vech}(S)$ is a vector consisting of the elements of $S$ from above and including the diagonal, taken columnwise.
\end{Definition}

Compared to $\mathrm{vec}$, the operator $\mathrm{vech}$ only takes the $\frac{p(p+1)}{2}$ distinct elements of a symmetric $p\times p$-matrix.

%%%%%%%%%%%%%%%%%%%%%%%%%%%%%%%%%%%%%%%%
\section{Some Functions of Matrices}

In this section we give a brief introduction to matrix variate functions that arise in the probability density function of the noncentral Wishart distribution.

\begin{Definition}(Borel-$\sigma$-algebra, cf. \citet[p. 48]{pe})
Let $(X,\mathcal{T})$ be a topological space. The Borel-$\sigma$-algebra on X is then given by the smallest $\sigma$-algebra that contains $\mathcal{T}$ and will be denoted by $\calb(X)$.
\end{Definition}

If we work with the spaces $\bbr$, $\bbr^n$ or $\calm_{m,n}(\bbr)$ we assume that they have the natural topology, which is the set of all possible unions of open balls, given the euclidean metric. See \citet[p.22]{bvq} for details on topological spaces. We write $\calb$ instead of $\calb(\bbr)$, $\calb^n$ instead of $\calb(\bbr^n)$ and $\calb^{m,n}$ instead of $\calb(\calm_{m,n}(\bbr))$. 

\begin{Definition}[Integration]
Let $f:\calm_{m,n}(\bbr)\rightarrow\bbr$ be a $\calb^{m,n}$-$\calb$-measurable function and $M\in\calb^{m,n}$ a measurable subset of $\calm_{m,n}(\bbr)$ and let $\lambda$ denote the Lebesgue-measure on $(\bbr^{mn},\calb^{mn})$. The integral of $f$ over M is then defined by
\[ \int_M{f(X)\,dX}:=\int_M f(X)\,d(\lambda\circ\mathrm{vec})(X)=\int_{\mathrm{vec}(M)} f\circ\mathrm{vec}^{-1}(x)\,d\lambda(x) \]
We call $\lambda\circ\mathrm{vec}$ the Lebesgue-measure on $(\calm_{m,n}(\bbr),\calb^{m,n})$.
\end{Definition}

Because $\cals_p$ is isomorphic to $\bbr^{\frac{p(p+1)}{2}}$ we know that for $p\geq 2$ that $\cals_p$ is a real subspace of $\calm_p(\bbr)$ and hence a $\lambda\circ\mathrm{vec}$-Null set. Thus, we define another Lebesgue measure on the subspace of all symmetric matrices $\cals_p$. This integral is always meant if we integrate on a subset of $\cals_p$, like e.g. $\cals_p^+$, because an integral w.r.t. $\lambda\circ\mathrm{vec}$ would always be zero.

\begin{Definition}[Integration II]
Let $f:\cals_p\rightarrow\bbr$ be a $\calb(\cals_p)$-$\calb$-measurable function and $M\in\calb(\cals_p)$ a Borel measurable subset of $\cals_p$ and let $\lambda$ denote the Lebesgue-measure on $(\bbr^{\frac{p(p+1)}{2}},\calb^{\frac{p(p+1)}{2}})$. The integral of $f$ over M is then defined by 
\[ \int_M{f(X)\,dX}:=\int_M f(X)\,d(\lambda\circ\mathrm{vech})(X)=\int_{\mathrm{vech}(M)} f\circ\mathrm{vech}^{-1}(x)\,d\lambda(x) \]
We call $\lambda\circ\mathrm{vech}$ the Lebesgue-measure on $(\cals_p,\calb(\cals_p))$.
\end{Definition}

The following definition is just for convenience and can also be found in \citet{grupta}.

\begin{Definition} For $A\in\calm_p(\bbr)$ define $\etr(A) := e^{\tr(A)}$. \end{Definition}

\begin{Definition}[Matrix Variate Gamma Function]
\[ \Gamma_p(a):=\int_{\cals_p^{+}}{\etr(-A)\det(A)^{a-\frac{1}{2}(p+1)}\,dA} \quad\fa\, a>\frac{p-1}{2} \]
\end{Definition}

\citet[Theorem 1.4.1.]{grupta} show that, for $a>\frac{p-1}{2}$, the matrix variate gamma function can be expressed as a finite product of ordinary gamma functions. Thus, we do not need to worry about the existence of the matrix variate gamma function.

The definition of the Hypergeometric Function is a little bit cumbersome and needs further explanations.
Like in \citet{head} by a symmetric \emph{homogeneous polynomial} of degree $k$ in $y_1,\ldots,y_m$ we mean a polynomial which is unchanged by a permutation of the subscripts and such that every term in the polynomial has degree $k$. Denote by $V_k$ the space of all symmetric homogeneous polynomials of degree $k$ in the $\frac{p(p+1)}{2}$ distinct elements of $S\in\cals_p^{+}$. Then, $\tr(S)^k=(S_{11}+\ldots+S_{pp})^k$ is an element of $V_k$. According to \citet{grupta} $V_k$ can be decomposed into a direct sum of irreducible invariant subspaces $V_{\kappa}$ where $\kappa$ is a partition of $k$.
With a \emph{partition} $\kappa$ of $k$ we mean a $p$-tuple $\kappa=(k_1,\ldots,k_p)$ such that $k_1\geq\ldots\geq k_p\geq 0$ and $k_1+\ldots+k_p=k$.

\begin{Definition}[Zonal Polynomials]
The zonal polynomial $C_{\kappa}(S)$ is the component of $\tr(S)^k$ in the subspace $V_{\kappa}$.
\end{Definition}

The Definition implies that $\tr(S)^k=\sum_{\kappa}{C_{\kappa}(S)}$ (according to \citet{grupta}). Finally, we are able to state

\begin{Definition}[Hypergeometric Function of matrix argument]\ \\
The Hypergeometric Function of matrix argument is defined by
\beam {}_mF_n(a_1,\ldots,a_m;b_1,\ldots,b_n;S)=\sum_{k=0}^{\infty}{\sum_{\kappa}{\frac{(a_1)_{\kappa}\cdots(a_m)_{\kappa}C_{\kappa}(S)}{(b_1)_{\kappa}\cdots(b_n)_{\kappa}k!}}} \label{eq_hgf}\eeam
where $a_i,\,b_j\in\bbr$, $S$ is a symmetric $p\times p$-matrix and $\sum_{\kappa}$ the summation over all partitions $\kappa$ of $k$ and $(a)_{\kappa}=\prod_{j=1}^p \left( a-\frac{1}{2}(j-1) \right)_{k_j}$ denotes the generalized hypergeometric coefficient, with $(x)_{k_j}=x(x+1)\cdots(x+k_j-1)$. See \citet[p.30]{grupta}.
\end{Definition}

\citet[p.34]{grupta} discuss conditions for the convergence and thus well-definedness of (\ref{eq_hgf}). A sufficient condition is $m<n+1$.

\begin{Remark}
${}_nF_n(a_1,\ldots,a_n;a_1,\ldots,a_n;S)=\sum_{k=0}^{\infty}{\frac{(\tr(S))^k}{k!}}=\etr(S)$
\label{Remark_simplifyF}
\end{Remark}

The following Lemma eases later on the calculation of expectations of functions of noncentral Wishart distributed random matrices.

\begin{Lemma}
Let $Z,T\in\cals_p^{+}$. Then
\beao
\int_{\cals_p^{+}}{\etr(-ZS)\det(S)^{a-\frac{p+1}{2}}{}_mF_n(a_1,\ldots,a_m;b_1,\ldots,b_n;ST)\,dS}\\
=\Gamma_p(a)\det(Z)^{-a}{}_{m+1}F_n(a_1,\ldots,a_m,a;b_1,\ldots,b_n;Z^{-1}T)
\eeao
$\fa a>\frac{p-1}{2}$.
\label{Lemma_calcFintegral}\end{Lemma}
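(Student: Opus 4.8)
The plan is to expand the hypergeometric function into its defining series of zonal polynomials, integrate term by term, and thereby reduce the statement to a single integral identity for a zonal polynomial. The building block I would establish first is
\[
\int_{\cals_p^{+}}\etr(-ZS)\det(S)^{a-\frac{p+1}{2}}C_{\kappa}(ST)\,dS
=(a)_{\kappa}\,\Gamma_p(a)\,\det(Z)^{-a}\,C_{\kappa}(Z^{-1}T),
\]
valid for every $k\ge 0$, every partition $\kappa$ of $k$, and every $a>\frac{p-1}{2}$ (here $C_{\kappa}(Z^{-1}T)$ is understood via the eigenvalues of $Z^{-1}T$, equivalently $C_{\kappa}(Z^{-1/2}TZ^{-1/2})$). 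Granting this, the Lemma is immediate: insert the series ${}_mF_n(a_1,\ldots,a_m;b_1,\ldots,b_n;ST)=\sum_{k\ge 0}\sum_{\kappa}\frac{(a_1)_{\kappa}\cdots(a_m)_{\kappa}}{(b_1)_{\kappa}\cdots(b_n)_{\kappa}\,k!}\,C_{\kappa}(ST)$ into the integral, interchange summation and integration, apply the building block to each summand, and note that the extra factor $(a)_{\kappa}$ just appends $a$ to the list of numerator parameters; the common prefactor $\Gamma_p(a)\det(Z)^{-a}$ comes out of the sum and what remains is exactly the zonal-polynomial series of ${}_{m+1}F_n(a_1,\ldots,a_m,a;b_1,\ldots,b_n;Z^{-1}T)$.

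To prove the building block I would start from the special case $Z=I_p$, namely $\int_{\cals_p^{+}}\etr(-S)\det(S)^{a-\frac{p+1}{2}}C_{\kappa}(SY)\,dS=(a)_{\kappa}\Gamma_p(a)C_{\kappa}(Y)$, which is Constantine's integral; it can be quoted from \citet{grupta} (or \citet{head}) and is precisely the place where the definitions of $\Gamma_p$, of the generalized coefficient $(a)_{\kappa}$, and of the zonal polynomials through their orthogonal invariance are tied together. The general case then follows by the substitution $S=Z^{-1/2}UZ^{-1/2}$ --- legitimate since $Z\in\cals_p^{+}$ has a positive definite square root and inverse --- whose Jacobian on $\cals_p$ is $\det(Z)^{-\frac{p+1}{2}}$. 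One has $\tr(ZS)=\tr(U)$ and $\det(S)=\det(Z)^{-1}\det(U)$, and because $C_{\kappa}$ is a symmetric function of the eigenvalues and $Z^{-1/2}UZ^{-1/2}T$ has the same eigenvalues as $U\,Z^{-1/2}TZ^{-1/2}$, also $C_{\kappa}(ST)=C_{\kappa}\big(U\,Z^{-1/2}TZ^{-1/2}\big)$; applying the $Z=I_p$ identity with $Y=Z^{-1/2}TZ^{-1/2}$ and using $C_{\kappa}(Z^{-1/2}TZ^{-1/2})=C_{\kappa}(Z^{-1}T)$ yields the claim.

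The step I expect to require the most care is the interchange of the infinite double sum with the integral. Here it helps that $C_{\kappa}\ge 0$ on $\cals_p^{+}$ and that $(a)_{\kappa}>0$ for $a>\frac{p-1}{2}$, since each factor $\big(a-\frac{j-1}{2}\big)_{k_j}$ with $j\le p$ is a product of strictly positive numbers; consequently the termwise-integrated series has nonnegative terms whenever $a_1,\ldots,a_m,b_1,\ldots,b_n>0$ --- which covers the applications of this Lemma in the sequel --- and Tonelli's theorem applies verbatim. For general real parameters one dominates using $|(a_i)_{\kappa}|\le(|a_i|+p)_{\kappa}$ together with the lower bounds on the $|(b_j)_{\kappa}|$ that underlie the convergence criterion $m\le n$ quoted after the definition of ${}_mF_n$, obtaining absolute convergence of the integrated series, after which dominated convergence legitimises the interchange. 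Reproving Constantine's identity from scratch would be the genuinely hard part, but since it is available in the cited references the remaining work is only the elementary change of variables above and this convergence bookkeeping.
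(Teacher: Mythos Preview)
Your argument is correct and is precisely the standard derivation: expand ${}_mF_n$ in zonal polynomials, apply Constantine's integral $\int_{\cals_p^{+}}\etr(-S)\det(S)^{a-\frac{p+1}{2}}C_{\kappa}(SY)\,dS=(a)_{\kappa}\Gamma_p(a)C_{\kappa}(Y)$ term by term, and reduce the general $Z$ to $Z=I_p$ via the substitution $S\mapsto Z^{-1/2}UZ^{-1/2}$ with Jacobian $\det(Z)^{-(p+1)/2}$. The interchange justification you sketch is also adequate for the uses made of the Lemma later.

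The only difference from the paper is that the paper does not prove the Lemma at all: it simply records it as a special case of \citet[Theorem 1.6.2]{grupta}. Your write-up is therefore not an alternative route but rather a reconstruction of the proof that sits behind that citation (indeed, the ingredients you invoke --- Constantine's identity and the linear change of variables on $\cals_p^{+}$ --- are exactly how Gupta--Nagar and Muirhead establish the result). What you gain is self-containment; what the paper gains is brevity.
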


\begin{proof}This is a special case of \citet[Theorem 1.6.2]{grupta}\end{proof}

\chapter{Matrix Variate Stochastics}

\begin{Definition}
A quadruple $(\Omega,\calg,(\calg_t)_{t\in\bbr_+},Q)$ is called a \emph{filtered probability space} if $\Omega$ is a set, $\calg$ is a $\sigma$-field on $\Omega$, $(\calg_t)_{t\in\bbr_+}$ is an increasing family of sub-$\sigma$-fields of $\calg$ (a \emph{filtration}) and $Q$ is a probability measure on $(\Omega,\calg)$. The filtered probability space is said to satisfy the \emph{usual conditions} if
\begin{enumerate}
\item the filtration is right continuous, i.e. $\bigcap_{s>t}\calg_s=\calg_t$ for every $t\geq 0$, and
\item if $(\calg_t)_{t\in\bbr_+}$ is complete, i.e. $\calg_0$ contains all sets from $\calg$ having $Q$-probability zero.
\end{enumerate}
\end{Definition}

Throughout this thesis we assume $(\Omega,\calf,(\calf_t)_{t\in\bbr_+},P)$ to be a filtered probability space satisfying the usual conditions.

%%%%%%%%%%%%%%%%%%%%%%%%%%%%%%%%%%%%%%%%%%%%%%%%%%
\section{Distributions}

Now we summarize a few results and definitions from \citet{grupta}.

\begin{Definition}[Random Matrix]
A $m\times n$-\emph{random matrix} $X$ is a measurable function $X:(\Omega,\calf)\rightarrow(\calm_{m,n}(\bbr),\mathcal{B}^{m,n})$.
\end{Definition}

\begin{Definition}[Probability Density Function]
A nonnegative measureable function $f_X$ such that
\[P(X\in M)=\int_{M}{f_X(A)\,dA}\quad\fa M\in\calb^{m\times n} \]
defines the\emph{ probability density function(p.d.f.)} of a $m\times n$-random matrix X.
\end{Definition}

Recall that the \emph{Radon-Nikodym} theorem (see \citet[Theorem 28.3]{pe}) says that the existence of a p.d.f. of $X$ is equivalent to saying that the distribution $P^X$, of $X$ under $P$, is absolutely continuous w.r.t the Lebesgue-measure on $(\calm_{m,n}(\bbr),\mathcal{B}^{m,n})$.

\begin{Definition}[Expectation]
Let $X$ be a $m\times n$-random matrix.
For every function $h=(h_{ij})_{i,j}:\calm_{m,n}(\bbr)\rightarrow\calm_{r,s}(\bbr)$ with  $h_{ij}:\calm_{m,n}(\bbr)\rightarrow\bbr,\,1\leq i\leq r,1\leq j\leq s$, the expected value $E(h(X))$ of $h(X)$ is an element of $\calm_{r,s}(\bbr)$ with elements 
\[ E(h(X))_{ij}=E(h_{ij}(X))=\int_{\calm_{m,n}(\bbr)}h_{ij}(A)\,P^X(dA) \]
\end{Definition}

As a matter of fact, $E(h(X))_{ij}=\int_{\calm_{m,n}(\bbr)}h_{ij}(A)f_X(A)\,dA$ if $X$ has p.d.f. $f_X$.

\begin{Definition}[Characteristic Function]\ \\
The \emph{characteristic function} of a $m\times n$-random matrix $X$ with p.d.f. $f_X$ is defined by
\beqq E[\etr(iXZ^\rmt)]=\int_{\calm_{m,n}(\bbr)}{\etr(iAZ^\rmt)f_X(A)\,dA} \label{cf}\eeqq
for every $Z\in\calm_p(\bbr)$.
\end{Definition}

\begin{Remark}\
\begin{itemize}
\item Because of $|exp(ix)|=1 \quad\fa x\in\bbr$ the above integral always exists. 
\item If the distribution of $X$ under $P$ is denoted by $P^X$, then (\ref{cf}) is the \emph{Fourier transform} of the measure $P^X$ at point $Z$ and will be denoted by $\wh{P^X}(Z)$.
\end{itemize}
\end{Remark}

From now on, with the term "$X$ in $\cals_p^+$ is a random matrix" we mean that $X$ is a random matrix with $X(\omega)\in\cals_p^+$ for almost all $\omega\in\Omega$.

\begin{Definition}[Laplace transform]\ \\
The \emph{Laplace transform} of a $p\times p$-random matrix $X$ in $\cals_p^+$ with p.d.f. $f_X$ is defined by
\beqq E[\etr(-UX)]=\int_{\cals_p^{+}}{\etr(-UA)f_X(A)\,dA}  \label{laplace}\eeqq
for every $U\in\cals_p^{+}$.
\end{Definition}

\begin{Remark}\
\begin{itemize}
\item From Lemma \ref{lemma_trvec} we see that $\tr(UA)=\mathrm{vec}(U)^\rmt \mathrm{vec}(A)$ represents an inner product on $\calm_p(\bbr)$.
\item For $A,U\in\cals_p^+$ we have that $\tr(-UA)=-\tr(\sqrt{U}A\sqrt{U})<0$, because $\sqrt{U}A\sqrt{U}$ is positive definite. Thus, the integral in (\ref{laplace}) is well-defined.
\end{itemize}
\end{Remark}

Basically, Levy's Continuity Theorem says that weak convergence of probability measures is equivalent to the  pointwise convergence of their respective Fourier transforms.

\begin{Theorem}[Levy's Continuity Theorem]
Let $(\mu_n)_{n\geq 1}$ be a sequence of probability measures on $\calm_p(\bbr)$, and let $(\wh{\mu_n})_{n\geq 1}$ denote their Fourier transforms.
\begin{enumerate}
\item If $\mu_n$ converges weakly to a probability measure $\mu$, $\mu_n\limw\mu$, then $\wh{\mu_n}(Z)\limn\wh{\mu}(Z)$ pointwise for all $Z\in\calm_p(\bbr)$.
\item If $f:\calm_p(\bbr)\rightarrow\bbr$ is continuous at $0$ and $\wh{\mu_n}(Z)\limn f(Z)$ for all $Z\in\calm_p(\bbr)$, then there exists a probability measure $\mu$ on $\calm_p(\bbr)$ such that $f(Z)=\wh{\mu}(Z)$, and $\mu_n\limw\mu$.
\end{enumerate} 
\end{Theorem}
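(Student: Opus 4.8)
The plan is to transfer everything to $\bbr^{p^2}$ via the linear homeomorphism $\mathrm{vec}:\calm_p(\bbr)\to\bbr^{p^2}$ and then invoke the classical theory of characteristic functions on Euclidean space. Set $\nu_n:=\mu_n\circ\mathrm{vec}^{-1}$, a probability measure on $\bbr^{p^2}$. By Lemma \ref{lemma_trvec}(i) together with $\tr(XZ^\rmt)=\tr(Z^\rmt X)$ we get $\tr(XZ^\rmt)=\mathrm{vec}(Z)^\rmt\mathrm{vec}(X)$, so
\[ \wh{\mu_n}(Z)=\int\etr(iXZ^\rmt)\,\mu_n(dX)=\int e^{i\,\mathrm{vec}(Z)^\rmt y}\,\nu_n(dy)=\wh{\nu_n}\bigl(\mathrm{vec}(Z)\bigr), \]
i.e.\ the Fourier transform on $\calm_p(\bbr)$ is exactly the usual characteristic function on $\bbr^{p^2}$ read through $\mathrm{vec}$. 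Since $\mathrm{vec}$ and $\mathrm{vec}^{-1}$ are continuous, $\mu_n\limw\mu$ iff $\nu_n\limw\nu:=\mu\circ\mathrm{vec}^{-1}$, and it is enough to prove both assertions for the $\nu_n$ on $\bbr^{p^2}$.

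For part (i) the functions $X\mapsto\cos(\tr(XZ^\rmt))$ and $X\mapsto\sin(\tr(XZ^\rmt))$ are bounded and continuous on $\calm_p(\bbr)$, so if $\mu_n\limw\mu$ the very definition of weak convergence yields $\int\etr(iXZ^\rmt)\,\mu_n(dX)\to\int\etr(iXZ^\rmt)\,\mu(dX)$, which is the claim; no Euclidean reduction is even needed here.

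For part (ii) first note $\wh{\nu_n}(0)=1$ for all $n$, hence the pointwise limit $g:=f\circ\mathrm{vec}^{-1}$ satisfies $g(0)=1$ and is continuous at $0$. The heart of the proof is to show the family $(\nu_n)$ is \emph{tight}. I would use the standard truncation estimate: for a probability measure $\rho$ on $\bbr$ with characteristic function $\varphi$ and any $u>0$,
\[ \rho\bigl(\{|x|\ge 2/u\}\bigr)\le\frac1u\int_{-u}^{u}\bigl(1-\mathrm{Re}\,\varphi(t)\bigr)\,dt, \]
applied to each of the $p^2$ one-dimensional marginals of $\nu_n$ (whose characteristic functions are $t\mapsto\wh{\nu_n}(te_j)$) and summed, using $\{|y|\ge R\}\subseteq\bigcup_j\{|y_j|\ge R/\sqrt{p^2}\}$. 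Since $1-\mathrm{Re}\,\wh{\nu_n}$ is bounded by $2$ and converges pointwise to $1-\mathrm{Re}\,g$, dominated convergence makes the resulting bound uniformly small in $n$ once $u$ is chosen small enough (using $g(0)=1$ and continuity of $g$ at $0$), after treating the finitely many small indices individually since each single probability measure is tight. Granted tightness, Prokhorov's theorem shows every subsequence of $(\nu_n)$ has a weakly convergent sub-subsequence; by part (i) the limit of such a sub-subsequence has characteristic function equal to $\lim\wh{\nu_{n_k}}=g$, and by injectivity of the Fourier transform on finite measures on $\bbr^{p^2}$ all these subsequential limits coincide. Hence $\nu_n\limw\nu$ for that common limit $\nu$ with $\wh{\nu}=g$, and translating back, $\mu_n\limw\mu:=\nu\circ\mathrm{vec}$ with $\wh{\mu}=f$.

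The main obstacle is precisely the tightness step: proving (or citing) the truncation inequality above and then extracting from it tail control of $\nu_n$ that is uniform in $n$; everything afterwards — Prokhorov plus uniqueness of characteristic functions — is mechanical. Alternatively one may simply quote the classical Lévy continuity theorem on $\bbr^{p^2}$ and let the isomorphism $\mathrm{vec}$ do the rest.
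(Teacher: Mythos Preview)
Your proposal is correct and follows exactly the route the paper takes: the paper simply cites the multivariate L\'evy continuity theorem on $\bbr^{p^2}$ (\citet[Theorem 19.1]{pe}) and remarks that it extends to the matrix variate case, which is precisely your reduction via $\mathrm{vec}$. You have merely unpacked the tightness argument behind that citation in more detail than the paper does.
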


\begin{proof}
See \citet[Theorem 19.1]{pe} for a proof of a multivariate version that can be extended to the matrix variate case easily.
\end{proof}

With the term analytic function we mean a function that is locally given by a convergent power series.

\begin{Lemma}(Cf. \citet[Lemma 3.8.4.]{mason})\ \\
Let $f$ be a real-valued analytic funtion on $\calm_p(\bbr)$ which is not identically equal to zero. Then the set $\{x\in\calm_p(\bbr):\, f(x)=0\}$ has $p^2$-dimensional Lebesgue measure zero.
\label{lemma_nullmenge}\end{Lemma}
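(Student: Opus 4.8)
The plan is to reduce the statement to the classical one-variable fact that the zero set of a non-trivial real-analytic function on $\bbr^n$ has Lebesgue measure zero, transported through the identification $\calm_p(\bbr)\cong\bbr^{p^2}$. First I would fix the linear isomorphism $\mathrm{vec}:\calm_p(\bbr)\to\bbr^{p^2}$ and observe that it is a homeomorphism which sends the Lebesgue measure $\lambda\circ\mathrm{vec}$ on $\calm_p(\bbr)$ to the Lebesgue measure $\lambda$ on $\bbr^{p^2}$ (this is literally how the Lebesgue measure on $\calm_p(\bbr)$ was defined in the Integration definition). Hence it suffices to show that $g:=f\circ\mathrm{vec}^{-1}:\bbr^{p^2}\to\bbr$ is real-analytic, not identically zero, and has zero set of $p^2$-dimensional Lebesgue measure zero; then $\{x\in\calm_p(\bbr):f(x)=0\}=\mathrm{vec}^{-1}(\{g=0\})$ is a $(\lambda\circ\mathrm{vec})$-null set by definition.

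Next I would check that $g$ is real-analytic on $\bbr^{p^2}$: since $\mathrm{vec}^{-1}$ is linear, composition of a locally convergent power series with a linear map is again a locally convergent power series, so analyticity is preserved; and $g\not\equiv 0$ because $\mathrm{vec}^{-1}$ is surjective and $f\not\equiv 0$. The heart of the argument is then the claim that the zero set of a non-trivial real-analytic function $g$ on $\bbr^N$ (here $N=p^2$) is Lebesgue-null. I would prove this by induction on $N$. For $N=1$, a non-zero analytic function on $\bbr$ has only isolated zeros (by the identity theorem for power series: if a zero were not isolated, all Taylor coefficients there would vanish, forcing $g\equiv0$ by connectedness), hence a countable, measure-zero set. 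For the inductive step, write $x=(x',x_N)$ with $x'\in\bbr^{N-1}$, and use Fubini: for the slice function $x_N\mapsto g(x',x_N)$ to be identically zero the entire sequence of its derivatives in $x_N$ at $x_N=0$ must vanish, i.e. $\partial_{x_N}^k g(x',0)=0$ for all $k$; each of these is itself real-analytic in $x'$, so the set $B$ of "bad" $x'$ (where the whole slice is identically zero) is a countable intersection of zero sets of analytic functions on $\bbr^{N-1}$. If all those analytic functions were identically zero we would get $g\equiv0$ near the $x'$-hyperplane and hence globally, a contradiction; so at least one is non-trivial and by the induction hypothesis $B$ has $(N-1)$-dimensional measure zero. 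For $x'\notin B$ the slice $\{x_N:g(x',x_N)=0\}$ is measure zero in $\bbr$ by the one-variable case, and Fubini then gives that $\{g=0\}$ has $N$-dimensional measure zero.

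The main obstacle is the bookkeeping in the inductive step: one has to be careful that "the restriction of $g$ to a line is not identically zero for a.e. base point" is genuinely equivalent to a countable family of analytic conditions in the lower-dimensional variables, and that non-triviality of $g$ really does force non-triviality of at least one of those conditions — the clean way is to expand $g$ in a globally convergent-on-compacta power series about the origin (or about any point where $g\neq0$) and track which coefficient is non-zero. Since the paper cites \citet[Lemma 3.8.4.]{mason} for exactly this statement, I would in fact just invoke that reference for the $\bbr^N$ version and keep only the short reduction via $\mathrm{vec}$ in the body of the proof, relegating the inductive argument above to a remark or omitting it entirely.
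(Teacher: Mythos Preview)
Your proof is correct: the reduction via $\mathrm{vec}$ to $\bbr^{p^2}$ is exactly what the definition of the Lebesgue measure on $\calm_p(\bbr)$ forces, and the induction-plus-Fubini argument for the zero set of a non-trivial real-analytic function on $\bbr^N$ is the standard one, with the only delicate point (that some $\partial_{x_N}^k g(\cdot,0)$ is non-trivial) handled correctly via the identity theorem.

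Compared with the paper: the paper gives \emph{no} proof at all and simply cites \citet[Lemma 3.8.4.]{mason}. Your proposal therefore strictly subsumes the paper's treatment; your final remark --- that in the body one would just invoke the reference and at most keep the $\mathrm{vec}$ reduction --- matches exactly what the paper does. The inductive argument you sketch is a genuine addition beyond the paper and is fine to keep as a self-contained justification.
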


\begin{Definition}[Covariance Matrix]
Let $X$ be a $m\times n$-random matrix and Y be a $p\times q$-random matrix. The $mn\times pq$ covariance matrix is defined as
\beao \cov(X,Y) & = & \cov(\mathrm{vec}(X^\rmt),\mathrm{vec}(Y^\rmt)) \\
& = & E[\mathrm{vec}(X^\rmt)\mathrm{vec}(Y^\rmt)^\rmt]-E[\mathrm{vec}(X^\rmt)]E[\mathrm{vec}(Y^\rmt)]^\rmt\eeao
i.e. $\cov(X,Y)$ is a $m\times p$ block matrix with blocks $\cov(\tilde{x_i}^\rmt,\tilde{y_j}^\rmt)\in\calm_{n,q}(\bbr)$ where $\tilde{x_i}$ (or resp. $\tilde{y_i}$) denote the rows of $X$ (or resp. $Y$).
\end{Definition}

\begin{Definition}[Matrix Variate Normal Distribution]
A $p\times n$-random matrix is said to have a \emph{matrix variate Normal distribution} with mean $M\in\calm_{p,n}(\bbr)$ and covariance $\Sigma\otimes\Psi$ where $\Sigma\in\cals_p^{+},\,\Psi\in\cals_n^{+}$, if $\mathrm{vec}(X^\rmt)\thicksim\caln_{pn}(\mathrm{vec}(M^\rmt),\Sigma\otimes\Psi)$ where $\caln_{pn}$ denotes the multivariate Normal distribution on $\bbr^{pn}$ with mean $\mathrm{vec}(M^\rmt)$ and covariance $\Sigma\otimes\Psi$. We will use the notation $X\thicksim\caln_{p,n}(M,\Sigma\otimes\Psi)$.
\end{Definition}

\begin{Lemma}(Cf. \citet[Theorem 2.3.1.]{grupta})
If $X\thicksim\caln_{p,n}(M,\Sigma\otimes\Psi)$, then $X^\rmt \thicksim\caln_{n,p}(M^\rmt,\Psi\otimes\Sigma)$
\label{lemma_normaltranspose}\end{Lemma}

\begin{Theorem}[Characteristic Function of the Matrix Variate Normal Distribution]\ \\
Let $X\thicksim\caln_{p,n}(M,\Sigma\otimes\Psi)$. Then the characteristic function of $X$ is given by
\beqq E[\etr(iXZ^\rmt)]=\etr\left(iZ^\rmt M - \frac{1}{2}Z^\rmt \Sigma Z\Psi\right) \label{cfnormal}\eeqq
\label{theorem_cfnd}\end{Theorem}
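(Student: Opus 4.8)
The plan is to reduce the matrix-variate statement to the already-known multivariate case via the vectorization operator. By the definition of the matrix variate normal distribution, $X\sim\caln_{p,n}(M,\Sigma\otimes\Psi)$ means precisely that $\mathrm{vec}(X^\rmt)\sim\caln_{pn}(\mathrm{vec}(M^\rmt),\Sigma\otimes\Psi)$. The characteristic function of a multivariate normal vector $y\sim\caln_{pn}(\mu,\Gamma)$ is the classical formula $E[\exp(it^\rmt y)]=\exp\bigl(it^\rmt\mu-\tfrac12 t^\rmt\Gamma t\bigr)$, which I will quote as standard. So the whole argument is a translation exercise: rewrite $\etr(iXZ^\rmt)$ as $\exp(it^\rmt y)$ for a suitable $t$ and $y=\mathrm{vec}(X^\rmt)$, plug into the classical formula, and translate the answer back into trace notation.

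First I would identify the linear functional. Using Lemma \ref{lemma_trvec}(i), $\tr(XZ^\rmt)=\tr((Z^\rmt)^\rmt{}^\rmt X^\rmt)$... more carefully: $\tr(iXZ^\rmt)=\tr(iZ^\rmt X)=\mathrm{tr}((iZ)^\rmt X)$, and since for any $A,B$ of matching size $\tr(A^\rmt B)=\mathrm{vec}(A)^\rmt\mathrm{vec}(B)$, I want to bring in $\mathrm{vec}(X^\rmt)$ rather than $\mathrm{vec}(X)$. Writing $\tr(iXZ^\rmt)=\tr((X^\rmt)^\rmt(iZ^\rmt)^\rmt{}^\rmt)$ and massaging, one gets $\tr(iXZ^\rmt)=\mathrm{vec}(iZ)^\rmt\,\mathrm{vec}(X^\rmt)$, i.e. the real vector $t=\mathrm{vec}(iZ)$ (formally; the imaginary unit just rides along). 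Hence $E[\etr(iXZ^\rmt)]=E[\exp(i\,\mathrm{vec}(Z)^\rmt\mathrm{vec}(X^\rmt))]$, which is the characteristic function of $\mathrm{vec}(X^\rmt)$ evaluated at $\mathrm{vec}(Z)$.

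Next I would apply the classical multivariate formula with $\mu=\mathrm{vec}(M^\rmt)$ and $\Gamma=\Sigma\otimes\Psi$. The mean term gives $i\,\mathrm{vec}(Z)^\rmt\mathrm{vec}(M^\rmt)=i\,\tr(Z^\rmt M)$ (reversing the identity above). The quadratic term gives $-\tfrac12\mathrm{vec}(Z)^\rmt(\Sigma\otimes\Psi)\mathrm{vec}(Z)$, and here is where I would use Lemma \ref{lemma_trvec}(ii): $(\Sigma\otimes\Psi)\mathrm{vec}(Z)=\mathrm{vec}(\Psi Z\Sigma^\rmt)=\mathrm{vec}(\Psi Z\Sigma)$ using symmetry of $\Sigma$, so the quadratic form becomes $\mathrm{vec}(Z)^\rmt\mathrm{vec}(\Psi Z\Sigma)=\tr(Z^\rmt\Psi Z\Sigma)=\tr(Z^\rmt\Sigma Z\Psi)$ by the cyclic property of trace (Lemma \ref{lemma_calcrulestr}) and symmetry. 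Assembling and exponentiating yields $\etr\bigl(iZ^\rmt M-\tfrac12 Z^\rmt\Sigma Z\Psi\bigr)$, as claimed.

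The only genuinely fiddly point — and the main obstacle — is the bookkeeping of transposes and the order $\Sigma\otimes\Psi$ versus $\Psi\otimes\Sigma$: one must be consistent about whether $\mathrm{vec}$ or $\mathrm{vec}(\cdot^\rmt)$ is being used, since the Kronecker factors swap accordingly (Lemma \ref{lemma_normaltranspose} is the shadow of this). I would double-check the identification by tracking a small case or by verifying that the final expression is symmetric under the simultaneous swap $(X,M,\Sigma,\Psi,Z)\mapsto(X^\rmt,M^\rmt,\Psi,\Sigma,Z^\rmt)$, which it is: $\tr(Z^\rmt M)\mapsto\tr(Z M^\rmt)=\tr(Z^\rmt M)$... rather $\tr((Z^\rmt)^\rmt M^\rmt)=\tr(ZM^\rmt)=\tr(M^\rmt{}^\rmt Z^\rmt)^\rmt$; the scalar invariance under transposition handles it. Everything else is a direct substitution into the classical Gaussian characteristic function plus the two identities in Lemma \ref{lemma_trvec}.
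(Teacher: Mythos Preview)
Your approach is correct and is exactly the standard argument --- reduce to the multivariate Gaussian characteristic function via $\mathrm{vec}$ and then translate back with Lemma~\ref{lemma_trvec}. The paper itself gives no proof at all: it simply cites \citet[Theorem 2.3.2]{grupta}, whose proof is precisely the vectorization argument you outline.

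One small correction to your bookkeeping (the very point you flagged as the obstacle): the correct identification is $\tr(XZ^\rmt)=\mathrm{vec}(Z^\rmt)^\rmt\mathrm{vec}(X^\rmt)$, not $\mathrm{vec}(Z)^\rmt\mathrm{vec}(X^\rmt)$. With $t=\mathrm{vec}(Z)$ your intermediate expression $\Psi Z\Sigma$ is dimensionally ill-formed ($\Psi$ is $n\times n$, $Z$ is $p\times n$); with $t=\mathrm{vec}(Z^\rmt)$ one gets $(\Sigma\otimes\Psi)\mathrm{vec}(Z^\rmt)=\mathrm{vec}(\Psi Z^\rmt\Sigma)$ and then $\mathrm{vec}(Z^\rmt)^\rmt\mathrm{vec}(\Psi Z^\rmt\Sigma)=\tr(Z\Psi Z^\rmt\Sigma)=\tr(Z^\rmt\Sigma Z\Psi)$, and everything lines up.
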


\begin{proof} Cf. \citet[Theorem 2.3.2]{grupta} \end{proof}

If $\Sigma$ and $\Psi$ are not positive definite, but still positive semidefinite we will use (\ref{cfnormal}) as a (generalized) definition for the matrix variate Normal distribution.

\begin{Theorem}
Let $X\thicksim\caln_{p,n}(M,\Sigma\otimes\Psi)$, $A\in\calm_{m,q}(\bbr),\,B\in\calm_{m,p}(\bbr)$ and \\ $C\in\calm_{n,q}(\bbr)$. Then $A+BXC\thicksim\caln_{m,q}(A+BMC,(B\Sigma B^\rmt)\otimes(C^\rmt \Psi C))$.
\label{theorem_scalingnd}\end{Theorem}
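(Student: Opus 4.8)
The plan is to reduce everything to the known multivariate case via the $\mathrm{vec}$ operator, using Lemma~\ref{lemma_trvec}~(ii) together with the definition of the matrix variate normal distribution and Lemma~\ref{lemma_normaltranspose}. First I would transpose: since $A+BXC\in\calm_{m,q}(\bbr)$, by definition $A+BXC\thicksim\caln_{m,q}(\cdots)$ means exactly that $\mathrm{vec}\big((A+BXC)^\rmt\big)$ is multivariate normal with the appropriate mean and covariance. So the whole statement is equivalent to an assertion about the vector $\mathrm{vec}\big((A+BXC)^\rmt\big)=\mathrm{vec}(C^\rmt X^\rmt B^\rmt)+\mathrm{vec}(A^\rmt)$.

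Next I would apply Lemma~\ref{lemma_trvec}~(ii) to rewrite $\mathrm{vec}(C^\rmt X^\rmt B^\rmt)=(B\otimes C^\rmt)\,\mathrm{vec}(X^\rmt)$. By hypothesis $X\thicksim\caln_{p,n}(M,\Sigma\otimes\Psi)$ means $\mathrm{vec}(X^\rmt)\thicksim\caln_{pn}(\mathrm{vec}(M^\rmt),\Sigma\otimes\Psi)$, so the vector $\mathrm{vec}\big((A+BXC)^\rmt\big)$ is an affine image $v\mapsto (B\otimes C^\rmt)v+\mathrm{vec}(A^\rmt)$ of a multivariate normal vector. Invoking the standard multivariate fact that affine transformations of Gaussians are Gaussian (this is the multivariate analogue that underlies Theorem~\ref{theorem_scalingnd}, and can be taken as known), the result is $\caln_{mq}$ with mean $(B\otimes C^\rmt)\mathrm{vec}(M^\rmt)+\mathrm{vec}(A^\rmt)$ and covariance $(B\otimes C^\rmt)(\Sigma\otimes\Psi)(B\otimes C^\rmt)^\rmt$.

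Then it remains to identify these two quantities in the claimed form. For the mean, run the computation backwards with Lemma~\ref{lemma_trvec}~(ii): $(B\otimes C^\rmt)\mathrm{vec}(M^\rmt)=\mathrm{vec}(C^\rmt M^\rmt B^\rmt)=\mathrm{vec}\big((BMC)^\rmt\big)$, so together with $\mathrm{vec}(A^\rmt)$ this is $\mathrm{vec}\big((A+BMC)^\rmt\big)$, matching the mean of $\caln_{m,q}(A+BMC,\cdots)$. For the covariance, use the mixed-product property of the Kronecker product, $(B\otimes C^\rmt)(\Sigma\otimes\Psi)(B^\rmt\otimes C)=(B\Sigma B^\rmt)\otimes(C^\rmt\Psi C)$, which is exactly $\caln_{m,q}\big(\cdots,(B\Sigma B^\rmt)\otimes(C^\rmt\Psi C)\big)$ after reading off the definition. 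One should also note $B\Sigma B^\rmt\in\ov{\cals_m^+}$ and $C^\rmt\Psi C\in\ov{\cals_q^+}$ (Theorem~\ref{spd}~(v)), so the generalized definition via the characteristic function~(\ref{cfnormal}) applies even when these degenerate — indeed an alternative, cleaner route is to bypass $\mathrm{vec}$ entirely and just plug $A+BXC$ into the characteristic function and simplify using $\tr$-cyclicity, which handles the semidefinite case uniformly.

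The main obstacle is essentially bookkeeping: keeping the transposes straight (the matrix normal is defined through $\mathrm{vec}$ of the \emph{transpose}, so $B$ and $C^\rmt$ appear in the Kronecker factor rather than $C^\rmt$ and $B$), and being careful that the Kronecker mixed-product identity is applied in the right order. There is no genuine analytic difficulty; the content is entirely in matching the two normalizations. I would therefore present it compactly, citing \citet[Theorem 2.3.2]{grupta} or \citet[Theorem 2.3.10]{grupta} for the multivariate input and doing the Kronecker algebra explicitly in one or two lines.
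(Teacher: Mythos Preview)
Your proposal is correct, but the paper takes precisely the ``alternative, cleaner route'' you mention in passing at the end: it works directly with the characteristic function of Theorem~\ref{theorem_cfnd}. The paper computes
\[
E[\etr(i(A+BXC)Z^\rmt)] = \etr(iAZ^\rmt)\,E[\etr(iX(CZ^\rmt B))],
\]
applies (\ref{cfnormal}) with $CZ^\rmt B$ in place of $Z^\rmt$, and then uses cyclicity of the trace to reorganise the exponent into the form $\etr\!\big(iZ^\rmt(A+BMC)-\tfrac{1}{2}Z^\rmt(B\Sigma B^\rmt)Z(C^\rmt\Psi C)\big)$, which one recognises as the characteristic function of $\caln_{m,q}(A+BMC,(B\Sigma B^\rmt)\otimes(C^\rmt\Psi C))$.

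Your $\mathrm{vec}$ approach is a perfectly valid alternative and has the virtue of making explicit that the content is just the multivariate affine-transformation rule dressed up in Kronecker notation; the mixed-product identity for $\otimes$ does the work that trace cyclicity does in the paper's argument. The characteristic-function route is shorter and, as you already note, handles the possibly semidefinite $B\Sigma B^\rmt$ and $C^\rmt\Psi C$ uniformly without needing to invoke Theorem~\ref{spd}(v) separately, since (\ref{cfnormal}) is taken as the definition in that case. Either way the proof is a two-line computation; the paper simply chose the one you relegated to an aside.
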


\begin{proof} Follows from Theorem \ref{theorem_cfnd}:
\beao
E[\etr(i(A+BXC)Z^\rmt)] & = & \etr(iAZ^\rmt)E[\etr(iX(CZ^\rmt B))] \\
& = & \etr(iAZ^\rmt)\etr\left(iCZ^\rmt B M - \frac{1}{2}CZ^\rmt B \Sigma B^\rmt Z C^\rmt \Psi\right) \\
& = & \etr\left(iZ^\rmt(A+BMC)-\frac{1}{2}Z^\rmt (B \Sigma B^\rmt) Z ( C^\rmt \Psi C)\right)
\eeao
\end{proof}

\begin{Definition}[Noncentral Wishart Distribution]
A $p\times p$-random matrix $X$ in $\cals_p^{+}$ is said to have a \emph{noncentral Wishart distribution} with parameters $p\in\bbn$, $n\geq p$, $\Sigma\in\cals_p^{+}$ and $\Theta\in\calm_p(\bbr)$ if its p.d.f is given by
\beqq
f_X(S)=\left(2^{\frac{1}{2}np}\,\Gamma_p(\frac{n}{2})\det(\Sigma)^{\frac{n}{2}}\right)^{-1}\etr\left(-\frac{1}{2}(\Theta+\Sigma^{-1}S)\right)det(S)^{\frac{1}{2}(n-p-1)} {}_0F_1\left(\frac{n}{2};\frac{1}{4}\Theta\Sigma^{-1}S\right)
\eeqq
where $S\in\cals_p^{+}$ and ${}_0F_1$ is the hypergeometric function. We write $X\thicksim\calw_p(n,\Sigma,\Theta)$.
\end{Definition}

\begin{Remark}\
\begin{itemize}
\item The requirement $n\geq p$ assures that the matrix variate gamma function is well-defined. For the case $p-1\leq n\leq p$ or resp. $n\in\{1,\ldots,p-1\}$ one could use the Laplace transform (\ref{e_ltwd}) or resp. Lemma \ref{lemma_wd} to define the Wishart distribution for this case. However, \citet[Appendix]{olkin} have shown for non-integer $n<p-1$ that (\ref{e_ltwd}) is not the Laplace transform of a probability distribution anymore.
\item If $\Sigma\in\ov{\cals_p^{+}}\backslash\cals_p^{+}$ the p.d.f. does not exist, but we can define the noncentral Wishart distribution using the characteristic function from Theorem \ref{cfwishart}
\item If $\Theta=0$, $X$ is said to have (central) Wishart distribution with parameters $p,n$ and $\Sigma\in\cals_p^{+}$ and its p.d.f. is given by
\[ \left(2^{\frac{1}{2}np}\,\Gamma_p(\frac{n}{2})\det(\Sigma)^{\frac{n}{2}}\right)^{-1}\etr\left(-\frac{1}{2}\Sigma^{-1}S\right)det(S)^{\frac{1}{2}(n-p-1)} \]
where $S\in\cals_p^{+}$ and $n\geq p$ (see \citet[Definition 3.2.1]{grupta}).
\end{itemize}
\label{centralwishart}\end{Remark}

\begin{Theorem}[Laplace Transform of the Noncentral Wishart Distribution]\ \\
Let $S \thicksim \calw_p(n,\Sigma,\Theta)$. Then the Laplace transform of S is given by
\beqq E(\etr(-US))=\det(I_p+2\Sigma U)^{-\frac{n}{2}} \etr[-\Theta(I_p+2\Sigma U)^{-1}\Sigma U]
\label{e_ltwd}\eeqq
with $U\in\cals_p^{+}$
\end{Theorem}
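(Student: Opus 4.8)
The plan is to compute the Laplace transform directly from the defining probability density function, exactly as one would derive the Laplace transform of the ordinary noncentral chi-square distribution, but using the matrix-variate machinery assembled in the Preliminaries. First I would write
\[
E(\etr(-US))=\int_{\cals_p^{+}}\etr(-US)\,f_S(S)\,dS,
\]
substitute the density from the Definition of the noncentral Wishart distribution, and collect the two exponential factors: $\etr(-US)$ from the Laplace kernel and $\etr(-\tfrac12\Sigma^{-1}S)$ from the density combine into $\etr\big(-(U+\tfrac12\Sigma^{-1})S\big)=\etr(-ZS)$ with $Z:=U+\tfrac12\Sigma^{-1}$, which lies in $\cals_p^{+}$ by Theorem \ref{spd}. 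The remaining factor $\etr(-\tfrac12\Theta)$ is constant in $S$ and pulls out of the integral.

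The core of the argument is then an application of Lemma \ref{Lemma_calcFintegral} with $m=0$ (no numerator parameters), $a=\tfrac{n}{2}$, and $T=\tfrac14\Sigma^{-1}\Theta$ — or rather $\tfrac14\Theta\Sigma^{-1}$; here I would need to be a little careful, since Lemma \ref{Lemma_calcFintegral} is stated for the argument $ST$ with $T\in\cals_p^{+}$, whereas the density contains ${}_0F_1(\tfrac{n}{2};\tfrac14\Theta\Sigma^{-1}S)$. Using that the zonal polynomials, hence ${}_mF_n$, depend only on the eigenvalues of their matrix argument and that $\Theta\Sigma^{-1}S$, $S\Theta\Sigma^{-1}$ and $\Sigma^{-1}S\Theta$ are all similar (by Lemma \ref{lemma_calcrulestr}(i)-type cyclic reasoning at the level of characteristic polynomials), I can rewrite the hypergeometric argument in the form $S\tilde T$ required by the Lemma. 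Applying the Lemma yields
\[
\int_{\cals_p^{+}}\etr(-ZS)\det(S)^{\frac{n}{2}-\frac{p+1}{2}}\,{}_0F_1\!\big(\tfrac{n}{2};S\tilde T\big)\,dS
=\Gamma_p\!\big(\tfrac{n}{2}\big)\det(Z)^{-\frac{n}{2}}\,{}_0F_1\!\big(\tfrac{n}{2};Z^{-1}\tilde T\big),
\]
the key point being that the extra numerator parameter produced by the Lemma is exactly $a=\tfrac n2$, which cancels the $b_1=\tfrac n2$ in the denominator, so by Remark \ref{Remark_simplifyF} the resulting ${}_1F_1(\tfrac n2;\tfrac n2;\cdot)$ collapses to $\etr(\cdot)$.

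It then remains to simplify the normalizing constants and the matrix arguments into the claimed closed form. The prefactor becomes $\big(2^{\frac12 np}\Gamma_p(\tfrac n2)\det(\Sigma)^{\frac n2}\big)^{-1}\cdot\Gamma_p(\tfrac n2)\det(Z)^{-\frac n2}$; writing $Z=\tfrac12\Sigma^{-1}(I_p+2\Sigma U)$ and using Lemma \ref{lemma_calcrulesdet}(i) gives $\det(Z)^{-\frac n2}=2^{\frac12 np}\det(\Sigma)^{\frac n2}\det(I_p+2\Sigma U)^{-\frac n2}$, so all the $2$'s and $\det(\Sigma)$'s cancel and the determinant factor $\det(I_p+2\Sigma U)^{-\frac n2}$ emerges. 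For the exponential, combining $\etr(-\tfrac12\Theta)$ with $\etr$ of the collapsed hypergeometric — whose argument is $\tfrac14 Z^{-1}\Theta\Sigma^{-1}$ (up to the similarity rearrangement above), and $Z^{-1}=2\Sigma(I_p+2\Sigma U)^{-1}$ — one gets the total exponent $-\tfrac12\tr\Theta+\tfrac12\tr\big[\Theta(I_p+2\Sigma U)^{-1}\big]=-\tfrac12\tr\big[\Theta(I_p-(I_p+2\Sigma U)^{-1})\big]=-\tr\big[\Theta(I_p+2\Sigma U)^{-1}\Sigma U\big]$, using $I_p-(I_p+2\Sigma U)^{-1}=(I_p+2\Sigma U)^{-1}(2\Sigma U)$. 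This is precisely $\etr[-\Theta(I_p+2\Sigma U)^{-1}\Sigma U]$, completing the proof.

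The main obstacle I anticipate is bookkeeping rather than anything deep: keeping the noncommutative matrix arguments of the hypergeometric function straight (the $\Theta\Sigma^{-1}S$ versus $S\tilde T$ issue), justifying the similarity-invariance of ${}_0F_1$ carefully enough to invoke Lemma \ref{Lemma_calcFintegral}, and then not dropping a factor of $2$ or a $\det(\Sigma)$ while massaging the constants. A secondary point worth a sentence is the domain of validity — the computation as written uses the p.d.f., hence needs $n\geq p$ and $\Sigma\in\cals_p^{+}$ and $U\in\cals_p^{+}$ (so that $Z$ is invertible); the general case is then obtained, if desired, by analytic continuation in $U$, but for the statement as given this is not needed.
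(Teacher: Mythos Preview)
Your proposal is correct and follows essentially the same route as the paper's proof: integrate the density against $\etr(-US)$, apply Lemma~\ref{Lemma_calcFintegral} with $Z=U+\tfrac12\Sigma^{-1}$ to turn ${}_0F_1$ into ${}_1F_1(\tfrac n2;\tfrac n2;\cdot)=\etr(\cdot)$ via Remark~\ref{Remark_simplifyF}, and then simplify the determinant and exponential exactly as you describe. Your extra care about the ordering $\Theta\Sigma^{-1}S$ versus $ST$ in the hypergeometric argument (handled by similarity invariance of the zonal polynomials) is well placed --- the paper applies the Lemma without commenting on this --- and note the small slip in your displayed formula, where the right-hand side should read ${}_1F_1$ rather than ${}_0F_1$, consistent with your surrounding text.
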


\begin{proof}
\beao
E(\etr{-US}) & = & \int_{\cals_p^{+}}{\etr(-US)f_S(S)\,dS} = \left(2^{\frac{1}{2}np}\,\Gamma_p(\frac{n}{2})\det(\Sigma)^{\frac{n}{2}}\right)^{-1}\etr\left(-\frac{1}{2}\Theta\right) \\
&   & \times\int_{\cals_p^{+}}\etr\left(-US-\frac{1}{2}\Sigma^{-1}S\right)det(S)^{\frac{1}{2}(n-p-1)}
{}_0F_1\left(\frac{n}{2};\frac{1}{4}\Theta\Sigma^{-1}S\right)\,dS
\eeao
With Lemma \ref{Lemma_calcFintegral} and Remark \ref{Remark_simplifyF} we get
\beao
&\int_{\cals_p^{+}}& \det(S)^{\frac{1}{2}(n-p-1)}\etr(-US-\frac{1}{2}\Sigma^{-1}S)
{}_0F_1(\frac{n}{2};\frac{1}{4}\Theta\Sigma^{-1}S)\,dS \\
& = & \Gamma_p(\frac{n}{2})\det(U+\frac{1}{2}\Sigma^{-1})^{-\frac{n}{2}}{}_1F_1(\frac{n}{2},\frac{n}{2};\frac{1}{4}(U+\frac{1}{2}\Sigma^{-1})^{-1}\Theta\Sigma^{-1}) \\
& = & \Gamma_p(\frac{n}{2})\det(\frac{1}{2}\Sigma^{-1}(I_p+2\Sigma U))^{-\frac{n}{2}}{}_1F_1(\frac{n}{2},\frac{n}{2};\frac{1}{2}(I_p+2\Sigma U)^{-1}\Theta) \\
& = & 2^{\frac{1}{2}np}\,\Gamma_p(\frac{n}{2})\det(\Sigma)^\frac{n}{2}\det(I_p+2\Sigma U)^{-\frac{n}{2}}\etr(\frac{1}{2}(I_p+2\Sigma U)^{-1}\Theta) \\
\eeao
Finally,
\beao
&&\etr(-\frac{1}{2}\Theta+\frac{1}{2}(I_p+2\Sigma U)^{-1}\Theta) \\
& = & \etr(-\frac{1}{2}\Theta(I_p-(I_p+2\Sigma U)^{-1})) \\
& = & \etr(-\frac{1}{2}\Theta(I_p+2\Sigma U)^{-1} (I_p+2\Sigma U-I_p) \\
& = & \etr(-\Theta(I_p+2\Sigma U)^{-1}\Sigma U)
\eeao
and altogether
\beao
E(\etr{-US}) & = & \int_{\cals_p^{+}}{\etr(-US)f_S(S)\,dS} = (2^{\frac{1}{2}np}\,\Gamma_p(\frac{n}{2})\det(\Sigma)^{\frac{n}{2}})^{-1}\etr(-\frac{1}{2}\Theta) \\
&   & \times\int_{\cals_p^{+}}\etr(-US-\frac{1}{2}\Sigma^{-1}S)\det(S)^{\frac{1}{2}(n-p-1)}
{}_0F_1(\frac{n}{2};\frac{1}{4}\Theta\Sigma^{-1}S)\,dS \\
& = & \det(I_p+2\Sigma U)^{-\frac{n}{2}} \etr[-\Theta(I_p+2\Sigma U)^{-1}\Sigma U]
\eeao
\end{proof}

\begin{Theorem}(Characteristic Function of the Noncentral Wishart Distribution, cf. \citet[Theorem 3.5.3.]{grupta})\ \\
Let $S \thicksim \calw_p(n,\Sigma,\Theta)$. Then the characteristic function of S is given by
\beqq E(\etr{(iZS)})=\det(I_p-2i\Sigma Z)^{-\frac{n}{2}} etr[i\Theta(I_p-2i\Sigma Z)^{-1}\Sigma Z]
\eeqq
with $Z\in\calm_p(\bbr)$
\label{cfwishart}\end{Theorem}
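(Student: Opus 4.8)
The plan is to derive the characteristic function by analytically continuing the Laplace transform formula~(\ref{e_ltwd}) in its matrix argument and evaluating it at $U=-iZ$. Since the noncentral Wishart law is carried by $\cals_p^{+}$, we have $\tr(ZS)=\tr(\tfrac12(Z+Z^\rmt)S)$ for almost every realisation, so $E[\etr(iZS)]$ depends on $Z$ only through its symmetric part; hence it suffices to treat $Z\in\cals_p$, which I do from now on.

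First I would regard both sides of~(\ref{e_ltwd}) as holomorphic functions on the domain
\[ \calu:=\{\,U\in\cals_p+i\cals_p:\;\tfrac12\Sigma^{-1}+\mathrm{Re}(U)\in\cals_p^{+}\,\}, \]
with $\mathrm{Re}(U)$ taken entrywise. This $\calu$ is open and convex in the $\tfrac{p(p+1)}{2}$-dimensional complex space $\cals_p+i\cals_p$, and it contains both the real cone $\cals_p^{+}$ (there $\tfrac12\Sigma^{-1}+U\in\cals_p^{+}$) and the point $-iZ$ (there $\mathrm{Re}(U)=0$). On $\calu$ put $L(U):=\int_{\cals_p^{+}}\etr(-US)f_S(S)\,dS$. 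Because $S\in\ov{\cals_p^{+}}$ one has $|\etr(-US)|=\etr(-\tr((\mathrm{Re}\,U)S))$, and on any compact subset of $\calu$ there is $\eta>0$ with $\tfrac12\Sigma^{-1}+\mathrm{Re}(U)-\eta I_p\in\ov{\cals_p^{+}}$ uniformly, so the integrand is majorised there by a fixed multiple of $\etr(-\eta S)\det(S)^{\frac{1}{2}(n-p-1)}\,{}_0F_1(\tfrac n2;\tfrac14\Theta\Sigma^{-1}S)$, which is integrable over $\cals_p^{+}$ by the very computation that proved~(\ref{e_ltwd}). Differentiation under the integral then makes $L$ holomorphic on $\calu$.

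Next I would check that the right-hand side $g(U):=\det(I_p+2\Sigma U)^{-n/2}\,\etr[-\Theta(I_p+2\Sigma U)^{-1}\Sigma U]$ is holomorphic on all of $\calu$. Conjugating by $\Sigma^{1/2}$ gives $\Sigma^{-1/2}(I_p+2\Sigma U)\Sigma^{1/2}=I_p+2\Sigma^{1/2}U\Sigma^{1/2}$, whose real part $I_p+2\Sigma^{1/2}\mathrm{Re}(U)\Sigma^{1/2}=2\big(\tfrac12 I_p+\Sigma^{1/2}\mathrm{Re}(U)\Sigma^{1/2}\big)$ is positive definite on $\calu$ by Theorem~\ref{spd}; a matrix with positive-definite real part is invertible, so $I_p+2\Sigma U$ never degenerates on $\calu$, and since $\calu$ is (simply) connected the branch of $\det(\,\cdot\,)^{-n/2}$ is pinned down globally by its value $1$ at $U=0$. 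By~(\ref{e_ltwd}), $L=g$ on $\cals_p^{+}$, which is a nonempty open subset of the maximal totally real subspace $\cals_p\subset\cals_p+i\cals_p$. The identity theorem for holomorphic functions (the same phenomenon behind Lemma~\ref{lemma_nullmenge}) therefore forces $L\equiv g$ on the connected set $\calu$. Evaluating at $U=-iZ$ and using $-\Theta(I_p-2i\Sigma Z)^{-1}\Sigma(-iZ)=i\Theta(I_p-2i\Sigma Z)^{-1}\Sigma Z$ gives exactly the asserted formula.

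The main obstacle is the holomorphic-extension step, not the endgame: one must exhibit the uniform integrable majorant on a genuine complex neighbourhood and justify moving the derivative inside, and one must verify that $I_p+2\Sigma U$ stays invertible and that a single-valued branch of $\det(I_p+2\Sigma U)^{-n/2}$ exists on a connected region that reaches $-iZ$. Once $L$ and $g$ are known to be holomorphic on such a region and to coincide on the real cone $\cals_p^{+}$, the identity theorem finishes it automatically, and no new integral has to be evaluated — the final statement is nothing but~(\ref{e_ltwd}) with the real positive-definite argument $U$ replaced by the purely imaginary argument $-iZ$.
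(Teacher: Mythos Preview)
The paper does not actually prove this theorem; it simply cites \citet[Theorem 3.5.3.]{grupta} and moves on. So there is no in-house argument to compare against. Your analytic-continuation route is a legitimate, self-contained proof that fits naturally with the paper's structure, since the Laplace transform (\ref{e_ltwd}) has just been established in full.

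The argument is sound. A couple of minor remarks: your reduction to symmetric $Z$ is the right move, and in fact it quietly repairs a small imprecision in the theorem as stated --- for non-symmetric $Z$ the left-hand side depends only on $\tfrac12(Z+Z^\rmt)$ while the right-hand side does not, so the identity is really a statement about $Z\in\cals_p$. Your holomorphy claims on $\calu$ are correct: the integrable majorant comes exactly from the finiteness already used in the proof of~(\ref{e_ltwd}), the conjugation by $\Sigma^{1/2}$ cleanly shows $I_p+2\Sigma U$ stays invertible, convexity of $\calu$ gives simple connectedness and hence a global branch of $\det(\cdot)^{-n/2}$, and coincidence on the open real cone $\cals_p^{+}$ inside the maximal totally real slice $\cals_p$ is enough for the several-variable identity theorem.

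For context, the textbook proof in \citet{grupta} goes via the Gaussian representation of Lemma~\ref{lemma_wd}: write $S=XX^\rmt$ with $X\sim\caln_{p,n}(M,\Sigma\otimes I_n)$ and compute $E[\etr(iZXX^\rmt)]$ column by column as a Gaussian integral. That approach is more elementary (no complex analysis) but requires integer $n\geq p$; your analytic-continuation argument works uniformly for all $n$ for which the density exists and leverages the Laplace transform already on hand, which is arguably cleaner given the paper's development.
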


The next Lemma shows that the noncentral Wishart distribution is the square of a matrix variate normal distributed random matrix. Hence, it is the matrix variate extension of the noncentral chi-square distribution.

\begin{Lemma}(Cf. \citet[Theorem 3.5.1.]{grupta})\ \\
Let $X \thicksim \caln_{p,n}(M,\Sigma \otimes I_n),\,n\in\{p,p+1,\ldots\}$. Then $XX^\rmt \thicksim \calw_p(n,\Sigma,\Sigma^{-1}MM^\rmt)$.
\label{lemma_wd}\end{Lemma}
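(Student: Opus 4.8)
The plan is to compute the characteristic function of $XX^\rmt$ directly and match it against the characteristic function of the noncentral Wishart distribution given in Theorem \ref{cfwishart}. Since $X \thicksim \caln_{p,n}(M,\Sigma\otimes I_n)$ has a density (as $\Sigma\in\cals_p^+$ and we may handle the density case first, the semidefinite case following by a limiting argument via Levy's Continuity Theorem), it suffices by the uniqueness of the Fourier transform to show that
\[ E\bigl[\etr(iZ XX^\rmt)\bigr] = \det(I_p-2i\Sigma Z)^{-\frac n2}\,\etr\bigl[i\Sigma^{-1}MM^\rmt(I_p-2i\Sigma Z)^{-1}\Sigma Z\bigr] \]
for all $Z$ in a neighbourhood of $0$ in $\calm_p(\bbr)$ (it is enough to check this for $Z\in\cals_p$, and then extend by the analyticity argument of Lemma \ref{lemma_nullmenge}, since both sides are analytic in $Z$ where defined).

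First I would reduce to $Z\in\cals_p^+$ small, write $\etr(iZXX^\rmt)=\etr(iX^\rmt Z X)$ using $\tr(ZXX^\rmt)=\tr(X^\rmt ZX)$, and then write out the expectation as an integral against the $\caln_{p,n}(M,\Sigma\otimes I_n)$ density, i.e. against the density of $\mathrm{vec}(X^\rmt)\thicksim\caln_{pn}(\mathrm{vec}(M^\rmt),\Sigma\otimes I_n)$. The key computational step is a Gaussian integral: completing the square in the exponent $-\tfrac12(\mathrm{vec}(X^\rmt)-\mathrm{vec}(M^\rmt))^\rmt(\Sigma\otimes I_n)^{-1}(\cdots) + i\,\mathrm{vec}(X^\rmt)^\rmt(I_n\otimes Z)\mathrm{vec}(X^\rmt)$, where I have used Lemma \ref{lemma_trvec}(i) and (ii) to rewrite $\tr(X^\rmt ZX)=\mathrm{vec}(X^\rmt)^\rmt(I_n\otimes Z)\mathrm{vec}(X^\rmt)$ — here one must be careful with the precise Kronecker factor depending on whether one vectorizes $X$ or $X^\rmt$, and Lemma \ref{lemma_normaltranspose} is the tool that keeps this bookkeeping consistent. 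The quadratic form in the exponent has matrix $(\Sigma\otimes I_n)^{-1} - 2i(I_n\otimes Z) = \Sigma^{-1}\otimes I_n - 2i(I_n\otimes Z)$; evaluating the Gaussian integral produces a factor $\det(\Sigma^{-1}\otimes I_n - 2i\,I_n\otimes Z)^{-1/2}$ together with the normalization $\det(\Sigma\otimes I_n)^{1/2}$, and a linear-in-$M$ remainder term from the completion of the square. Using $\det(A\otimes I_n)=\det(A)^n$ and Lemma \ref{lemma_calcrulesdet}(i)–(ii), the determinant factor collapses to $\det(I_p-2i\Sigma Z)^{-n/2}$, and the exponential remainder simplifies, after applying $\tr$-cyclicity and the identity $I_p-(I_p-2i\Sigma Z)^{-1}=-2i\Sigma Z(I_p-2i\Sigma Z)^{-1}$ (as in the proof of Theorem \ref{e_ltwd}), to $\etr[i\Sigma^{-1}MM^\rmt(I_p-2i\Sigma Z)^{-1}\Sigma Z]$.

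The main obstacle is the Kronecker-product bookkeeping and the justification that the formal Gaussian integral manipulations are legitimate when the "covariance" $\Sigma^{-1}\otimes I_n - 2i(I_n\otimes Z)$ is complex: one handles this the standard way, first for $Z\in\cals_p^+$ small where the real part of the quadratic form is positive definite so everything converges absolutely, then extends to general $Z\in\calm_p(\bbr)$ by analytic continuation, invoking Lemma \ref{lemma_nullmenge} to conclude equality of the two analytic functions everywhere they are defined. A cleaner alternative, which I would mention, is to bypass the Gaussian integral entirely: condition on nothing, but instead use Theorem \ref{theorem_scalingnd} to write $X = M + \Sigma^{1/2} Y$ with $Y\thicksim\caln_{p,n}(0,I_p\otimes I_n)$ (so $Y$ has i.i.d.\ standard normal entries), reducing to the identity-covariance case $YY^\rmt\thicksim\calw_p(n,I_p,MM^\rmt)$ after absorbing $\Sigma^{1/2}$ — but even this reduction still requires the core Gaussian computation for the identity case, so the determinant-and-trace manipulation above is unavoidable.
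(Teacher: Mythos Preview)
The paper gives no proof of this lemma at all --- it merely cites \citet[Theorem 3.5.1.]{grupta}. Your characteristic-function approach is exactly the standard route (and is essentially what Gupta--Nagar do), so there is nothing substantive to compare.

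One genuine slip to fix, though you flag the danger yourself: with the paper's convention $\mathrm{vec}(X^\rmt)\thicksim\caln_{pn}(\mathrm{vec}(M^\rmt),\Sigma\otimes I_n)$, the quadratic form is
\[
\tr(X^\rmt Z X)=\tr\bigl((X^\rmt)Z(X^\rmt)^\rmt\bigr)=\mathrm{vec}(X^\rmt)^\rmt\,(Z\otimes I_n)\,\mathrm{vec}(X^\rmt),
\]
not $(I_n\otimes Z)$ as you wrote. This matters, because with the correct factor the exponent's quadratic matrix becomes
\[
(\Sigma^{-1}\otimes I_n)-2i(Z\otimes I_n)=(\Sigma^{-1}-2iZ)\otimes I_n,
\]
a clean Kronecker product whose determinant is $\det(\Sigma^{-1}-2iZ)^{n}$ --- precisely what you need to collapse to $\det(I_p-2i\Sigma Z)^{-n/2}$. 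With your mixed ordering $\Sigma^{-1}\otimes I_n - 2i(I_n\otimes Z)$ the determinant does not factor and the argument would stall. Once this is corrected, the completion of the square and the $\etr$-remainder go through exactly as you sketch (restricting to symmetric $Z$ is harmless since $XX^\rmt\in\cals_p$, so only the symmetric part of $Z$ enters $\tr(ZXX^\rmt)$). Your alternative reduction via $X=M+\Sigma^{1/2}Y$ is also fine and is arguably cleaner.
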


Consider $S\thicksim\calw_p(n,\Sigma,\Theta)$. With the foregoing Lemma we can interpret the parameter $\Sigma$ as a scale and the parameter $\Theta$ as a location parameter for $S$. Especially, a central Wishart distributed matrix may be thought of a matrix square of normally distributed matrices with zero mean.

%%%%%%%%%%%%%%%%%%%%%%%%%%%%%%%%%%%%%%%%
\section{Processes and Basic Stochastic Analysis}

First, we make a general definition.

\begin{Definition}[Matrix Variate Stochastic Process]\ \\
A measurable function $X:\bbr_+\times\Omega\rightarrow\calm_{m,n}(\bbr),(t,\omega)\mapsto X(t,\omega)=X_t(\omega)$ is called a (matrix variate) \emph{stochastic process} if $X(t,\omega)$ is a random matrix for all $t\in\bbr_+$.\\
Moreover, $X$ is called a stochastic process in $\ov{\cals_p^+}$ if $X:\bbr_{+}\times\Omega\rightarrow\ov{\cals_p^+}$.
\end{Definition}

With $\bbr_+$ we always mean the interval of all non-negative real numbers, i.e $[0,\infty)$.\\
We can transfer most concepts from real stochastics to matrix variate stochastics, if we say a matrix variate stochastic process $X$ has property $\mathrm{P}$ if each component of $X$ has property $\mathrm{P}$. We explicitly write this down for only a few cases:

\begin{Definition}[Local Martingale]
A matrix variate stochastic process $X$ is called a \emph{local martingale}, if each component of $X$ is a local martingale, i.e. if there exists a sequence of strictly monotonic increasing stopping times $(T_n)_{n\in\bbn}$,$T_n\limas\infty$, such that $X_{\min\{n,T_n\},ij}$ is a martingale for all $i,j$.
\end{Definition}

\begin{Definition}[Matrix Variate Brownian Motion]\ \\
A matrix variate \emph{Brownian motion} $B$ in $\calm_{n,p}(\bbr)$ is a matrix consisting of independent, one-dimensional Brownian motions, i.e. $B=(B_{ij})_{i,j}$ where $B_{ij}$ are independent one-dimensional Brownian motions, $1\leq i\leq n,\,1\leq j\leq p$.
We write $B\thicksim\calbm_{n,p}$ (and $B\thicksim\calbm_{n}$ if $p=n$).
\end{Definition}

\begin{Remark}
$B_t\thicksim\caln_{n,p}(0,t I_{np})$
\end{Remark}

\begin{Theorem}
Let $W\thicksim\calbm_{n,p}$, $A\in\calm_{m,q}(\bbr),\,B\in\calm_{m,n}(\bbr)$ and $C\in\calm_{p,q}(\bbr)$. Then
$A + B W_t C \thicksim\caln_{m,q}(A,t(BB^\rmt)\otimes(C^\rmt C))$.
\end{Theorem}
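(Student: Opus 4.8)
The plan is to deduce this directly from Theorem~\ref{theorem_scalingnd}, together with the fact noted in the Remark just above that $W_t\thicksim\caln_{n,p}(0,tI_{np})$.

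First I would rewrite the covariance of $W_t$ in Kronecker form: since $tI_{np}=(tI_n)\otimes I_p$, we may regard $W_t$ as matrix variate normal with mean $0\in\calm_{n,p}(\bbr)$ and covariance $(tI_n)\otimes I_p$, where $tI_n\in\cals_n^+$ and $I_p\in\cals_p^+$ for $t>0$ (the case $t=0$ being the trivial degenerate one, $W_0\equiv 0$).

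Next I would apply Theorem~\ref{theorem_scalingnd}, with the roles of the dimensions called $p,n$ there played by $n,p$ here: take $X:=W_t\thicksim\caln_{n,p}(0,(tI_n)\otimes I_p)$ and the matrices $A\in\calm_{m,q}(\bbr)$, $B\in\calm_{m,n}(\bbr)$, $C\in\calm_{p,q}(\bbr)$. The theorem then gives $A+BW_tC\thicksim\caln_{m,q}\bigl(A+B\cdot 0\cdot C,\,(B(tI_n)B^\rmt)\otimes(C^\rmt I_p C)\bigr)$, and simplifying the two Kronecker factors via $B(tI_n)B^\rmt=t\,BB^\rmt$ and $C^\rmt I_p C=C^\rmt C$ yields exactly $A+BW_tC\thicksim\caln_{m,q}(A,\,t(BB^\rmt)\otimes(C^\rmt C))$.

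There is essentially no obstacle here; the only point deserving a word of care is that $BB^\rmt$ and $C^\rmt C$ are in general only positive \emph{semi}definite (Theorem~\ref{spd}(ix)), so the statement is to be read via the generalized definition of the matrix variate normal distribution through its characteristic function, as agreed after Theorem~\ref{theorem_cfnd}. This causes no difficulty, since the proof of Theorem~\ref{theorem_scalingnd} is carried out entirely at the level of characteristic functions and never invokes invertibility of the covariance factors; if one prefers a self-contained argument, one may simply repeat that short computation directly, starting from $E[\etr(iW_tZ^\rmt)]=\etr(-\tfrac{t}{2}Z^\rmt Z)$ and substituting $Z\mapsto CZ^\rmt B$.
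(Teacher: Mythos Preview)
Your proof is correct and takes essentially the same approach as the paper: the paper's proof consists of the single line ``Follows from Theorem~\ref{theorem_scalingnd} and the fact that $I_{np}=I_n\otimes I_p$,'' which is exactly what you have spelled out in detail. Your additional remark about semidefiniteness and the characteristic-function reading is a nice clarification that the paper leaves implicit.
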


\begin{proof} Follows from Theorem \ref{theorem_scalingnd} and the fact that $I_{np}=I_n\otimes I_p$ \end{proof}

\begin{Definition}[Semimartingale]
A matrix variate stochastic process $X$ is called a \emph{semimartingale} if $X$ can be decomposed into $X=X_0+M+A$ where $M$ is a local martingale and $A$ an adapted process of finite variation.
\end{Definition}

We will only consider continuous semimartingales in this thesis.\\

For a $n\times p$-dimensional Brownian motion $W\thicksim\calbm_{n,p}$, stochastic processes $X$ resp. $Y$ in  $\calm_{m,n}(\bbr)$ or resp. $\calm_{p,q}(\bbr)$ and a stopping time $T$ the matrix variate \emph{stochastic integral} on $[0,T]$ is meant to be a matrix with entries
\[ \left(\int_0^T X_t\,dW_t\,Y_t\right)_{i,j}=\sum_{k=1}^n\sum_{l=1}^p \int_0^T X_{t,ik}\,Y_{t,lj}dW_{t,kl} \quad\fa 1\leq i \leq m,\,1\leq j\leq q \]

\begin{Theorem}[Matrix Variate Itô Formula on Open Subsets]
Let $U\subseteq\calm_{m,n}(\bbr)$ be open, $X$ be a continuous semimartingale with values in $U$ and let $f:U\rightarrow \bbr$ be a twice continuously differentiable function. Then $f(X)$ is a continuous semimartingale and
\beqq f(X_t)=f(X_0) + \tr\left(\int_0^t Df(X_s)^\rmt \,dX_s\right) + \frac{1}{2}\int_0^t \sum_{j,l=1}^n \sum_{i,k=1}^m \frac{\partial^2}{\partial X_{ij}\,\partial X_{kl}} f(X_s) \, d[X_{ij},X_{kl}]_s \label{eq_ito}\eeqq
with $D=(\frac{\partial}{\partial X_{ij}})_{i,j}$
\label{theorem_ito}\end{Theorem}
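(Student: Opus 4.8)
The plan is to reduce the matrix-variate statement to the classical multidimensional It\^o formula by means of the linear isomorphism $\mathrm{vec}:\calm_{m,n}(\bbr)\to\bbr^{mn}$. Put $d:=mn$, let $Y:=\mathrm{vec}(X)$, which is a continuous $\bbr^{d}$-valued semimartingale taking values in the open set $\mathrm{vec}(U)\subseteq\bbr^{d}$, and set $g:=f\circ\mathrm{vec}^{-1}:\mathrm{vec}(U)\to\bbr$, which is twice continuously differentiable because $\mathrm{vec}$ is linear. Identify a coordinate index $\alpha\in\{1,\dots,d\}$ with a pair $(i,j)$, $1\le i\le m$, $1\le j\le n$, in the obvious way, so that $Y_{\alpha}=X_{ij}$, $\partial_{\alpha}g(Y_s)=\frac{\partial}{\partial X_{ij}}f(X_s)=(Df(X_s))_{ij}$, and, if $\beta\leftrightarrow(k,l)$, then $\partial_{\alpha}\partial_{\beta}g(Y_s)=\frac{\partial^{2}}{\partial X_{ij}\,\partial X_{kl}}f(X_s)$.

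The first step is to apply the classical It\^o formula for a $C^{2}$ function and a continuous $\bbr^{d}$-valued semimartingale (any standard reference on stochastic analysis). This gives that $g(Y)=f(X)$ is a continuous semimartingale --- the $\partial_\alpha g(Y)$ are continuous adapted, hence locally bounded and predictable, so each scalar It\^o integral against the continuous semimartingale $Y_\alpha$ is a continuous semimartingale, while the quadratic-covariation term is continuous of finite variation --- and that
\[
g(Y_t)=g(Y_0)+\sum_{\alpha=1}^{d}\int_0^t\partial_\alpha g(Y_s)\,dY_{s,\alpha}+\frac12\sum_{\alpha,\beta=1}^{d}\int_0^t\partial_\alpha\partial_\beta g(Y_s)\,d[Y_\alpha,Y_\beta]_s .
\]

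The second step is to rewrite each term in matrix notation. Re-indexing $\alpha\leftrightarrow(i,j)$ and $\beta\leftrightarrow(k,l)$, the double sum becomes exactly $\frac12\int_0^t\sum_{j,l=1}^{n}\sum_{i,k=1}^{m}\frac{\partial^{2}}{\partial X_{ij}\,\partial X_{kl}}f(X_s)\,d[X_{ij},X_{kl}]_s$. For the first-order term, using the entrywise definition of the matrix-valued stochastic integral recalled just before the theorem (extended in the obvious way to integration against a continuous semimartingale), one computes $\bigl(\int_0^t Df(X_s)^\rmt\,dX_s\bigr)_{ii}=\sum_{k=1}^{m}\int_0^t(Df(X_s))_{ki}\,dX_{s,ki}$, hence
\[
\tr\left(\int_0^t Df(X_s)^\rmt\,dX_s\right)=\sum_{i=1}^{m}\sum_{j=1}^{n}\int_0^t\frac{\partial}{\partial X_{ij}}f(X_s)\,dX_{s,ij}=\sum_{\alpha=1}^{d}\int_0^t\partial_\alpha g(Y_s)\,dY_{s,\alpha},
\]
which is precisely the first-order term above (here one also uses $\tr(A^\rmt B)=\sum_{i,j}A_{ij}B_{ij}$, i.e.\ Lemma~\ref{lemma_trvec}(i)). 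Substituting the three rewritten terms into the displayed formula yields (\ref{eq_ito}).

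There is no deep obstacle; the one point requiring care is the phrase ``on open subsets''. The textbook It\^o formula is often stated for functions defined on all of $\bbr^{d}$, so to apply it to $g$, which is only defined on $\mathrm{vec}(U)$, one localizes: since $X$ is continuous and $U$ is open, choose an increasing sequence of stopping times $(T_r)_{r\in\bbn}$ with $T_r\limas\infty$ such that the stopped process $X^{T_r}$ stays within a compact subset $K_r\subset U$; extend $f$ from a neighbourhood of $K_r$ to a globally defined $C^{2}$ function $f_r$ on $\calm_{m,n}(\bbr)$ by a smooth cutoff, apply the global It\^o formula to $f_r$ and $X^{T_r}$, observe that on $[0,T_r]$ both sides coincide with the $f$-version of the identity, and let $r\to\infty$. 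This exhaustion argument is the only genuinely technical step; the rest is the index bookkeeping between $\bbr^{mn}$ and matrix index pairs together with the trace identity above.
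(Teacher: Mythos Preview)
Your proof is correct and follows essentially the same route as the paper: the paper's proof consists of a direct citation to the multidimensional It\^o formula on open sets in Revuz--Yor (Chapter~IV, Theorem~3.3 and Remark~2) together with Lemma~\ref{lemma_trvec} to recast the first-order term as a trace, which is precisely your $\mathrm{vec}$-reduction plus the identity $\tr(A^\rmt B)=\mathrm{vec}(A)^\rmt\mathrm{vec}(B)$. The only difference is that you spell out the localization-by-compacts argument for the open-subset case, whereas the paper absorbs it into the cited Remark~2.
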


\begin{proof} Follows from \citet[Theorem 3.3 and Remark 2, Chapter IV]{revuz} and Lemma \ref{lemma_trvec} \end{proof}

\begin{Corollary}
Let $X$ be a continuous semimartingale on a stochastic interval $[0,T)$ with $T=\inf\{t:X_t\notin U\}$ for an open set $U\subseteq\calm_{m,n}(\bbr)$, $X_0\in U$, and let $f:U\rightarrow \bbr$ be a twice continuously differentiable function.\\
Then $T>0$, $(f(X))_{t\in[0,T)}$ is a continuous semimartingale and (\ref{eq_ito}) holds for $t\in[0,T)$.
\end{Corollary}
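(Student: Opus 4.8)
The plan is to deduce this from the already-established It\^o formula on open subsets (Theorem~\ref{theorem_ito}) by a standard localisation argument: exhaust $U$ from inside by relatively compact open sets, stop $X$ before it can leave each of them, apply Theorem~\ref{theorem_ito} to the stopped processes, and let the stopping times increase to $T$.

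That $T>0$ is immediate. Since $X$ is continuous (being a continuous semimartingale), $X_0\in U$ and $U$ is open in $\calm_{m,n}(\bbr)\cong\bbr^{mn}$, the complement $U^c$ is closed and does not meet $X_0$; continuity at $t=0$ then forces $X_t\in U$ for $t$ in a random neighbourhood of $0$, whence $T=\inf\{t:X_t\notin U\}>0$ a.s.

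For the main argument I would fix an exhaustion $U_1\subseteq U_2\subseteq\cdots$ of $U$ with each $\overline{U_n}$ compact, $\overline{U_n}\subseteq U_{n+1}$ and $\bigcup_n U_n=U$ (e.g.\ $U_n=\{x\in U:\ \|x\|<n,\ \mathrm{dist}(x,U^c)>1/n\}$), and set $T_n=\inf\{t\ge 0:X_t\notin U_n\}$. Each $T_n$ is a stopping time — the first hitting time of the closed set $U_n^c$ by a continuous adapted process on a space satisfying the usual conditions — the sequence is nondecreasing with $T_n\le T$, and, since $X$ exits every compact subset of $U$ as $t\uparrow T$ on $\{T<\infty\}$, the stopped process $X^{T_n}_t:=X_{t\wedge T_n}$ is a well-defined continuous semimartingale on all of $\bbr_+$ whose values lie in $\overline{U_n}\subseteq U$. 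Applying Theorem~\ref{theorem_ito} to $X^{T_n}$ and $f$, each $f(X^{T_n})$ is a continuous semimartingale satisfying (\ref{eq_ito}) for every $t\ge 0$. Because stopping at $T_n$ commutes with the stochastic integral, the Lebesgue integral and the quadratic-covariation terms occurring in (\ref{eq_ito}), this identity, read on $[0,T_n)$, is exactly the restriction of (\ref{eq_ito}) for $X$; hence (\ref{eq_ito}) holds for $X$ on $[0,T_n)$ and $f(X)$ is a continuous semimartingale there. Finally, on $\{t<T\}$ the set $\{X_s:0\le s\le t\}$ is the continuous image of the compact interval $[0,t]$, hence a compact subset of $U$, hence contained in some $U_n$; thus $t<T_n$ for that $n$, which shows $T_n\uparrow T$ a.s. Letting $n\to\infty$ gives that $(f(X))_{t\in[0,T)}$ is a continuous semimartingale and that (\ref{eq_ito}) holds for all $t\in[0,T)$.

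I expect the only real friction to be the bookkeeping around the stopped processes: making precise that $X^{T_n}$ is a genuine continuous semimartingale with values in $U$ for \emph{all} times (which rests on the implicit convention that a maximal $X$ leaves every compact subset of $U$ as $t\uparrow T$, so that $T_n<T$ whenever $T<\infty$ and $X_{T_n}$ is defined), and carefully checking that each of the three operations in (\ref{eq_ito}) is insensitive to stopping at $T_n$, so that the identities on the individual intervals $[0,T_n)$ patch together into one identity on $[0,T)$. The construction of the $U_n$, the stopping-time property of $T_n$ and the convergence $T_n\uparrow T$ are routine.
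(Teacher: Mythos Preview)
Your proposal is correct and follows essentially the same localisation strategy as the paper: stop $X$ at an increasing sequence of stopping times before it reaches $\partial U$, apply Theorem~\ref{theorem_ito} to each stopped process, and pass to the limit $T_n\uparrow T$. The only cosmetic difference is the choice of exhausting sets---the paper uses $T_n=\inf\{t:d(X_t,\partial U)\le 1/n\}$ directly, whereas you take a relatively compact exhaustion $U_n=\{x\in U:\|x\|<n,\ \mathrm{dist}(x,U^c)>1/n\}$---but the arguments are otherwise the same.
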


\begin{proof}
For any subset $A\subseteq\calm_p(\bbr)$ , define the distance between $A$ and any point $x\in\calm_p(\bbr)$ by $d(x,A):=\inf_{z\in A} d(x,z)$. Clearly, it exists an $N=N(\omega)\in\bbn$ such that $d(X_0(\omega),\partial U)\geq \frac{1}{N}$. The continuity of $X$ and $X_0\in U$ a.s imply $N<\infty$ a.s., and as $X$ is adapted, $(T_n)_{n\geq N}$ with
\[ T_n:=\inf\{t\geq 0:\, d(X_t,\partial U)\leq \frac{1}{n}\} \]
defines a sequence of positive stopping times. For every $n\geq N$, the stopped process $X^{T_n}:=X_{t\land T_n}$ is a continuous semimartingale with values in $U$, thus Theorem \ref{theorem_ito} can be applied to see that $f(X^{T_n})$ is a continuous semimartingale and (\ref{eq_ito}) holds for $X^{T_n}$. Because $T_n$ converges to $T=\inf\{t\geq 0:\, X_t\in\partial U\}$, we can conclude $T>0$ and that $(f(X))_{t\in[0,T)}$ is a continuous semimartingale and (\ref{eq_ito}) holds for $(X)_{t\in[0,T)}$.
\end{proof}

With the Definition of a `Matrix Quadratic Covariation' we are able to state the matrix variate partial integration formula in a handy way.

\begin{Definition}[Matrix Quadratic Covariation]
For two semimartingales $A\in\calm_{d,m}(\bbr),\,B\in\calm_{m,n}(\bbr)$ the matrix variate \emph{quadratic covariation} is defined by
\[ [A,B]_{t,ij}^M=\sum_{k=1}^m [A_{ik},B_{kj}]_t\in\calm_{d,n}(\bbr) \]
\end{Definition}

\begin{Theorem}[Matrix Variate Partial Integration](Cf. \citet[Lemma 5.11.]{barndorff})
Let $A\in\calm_{d,m}(\bbr),\,B\in\calm_{m,n}(\bbr)$ be two semimartingales. Then the matrix product $A_t B_t\in\calm_{d,n}(\bbr)$ is a semimartingale and
\[ A_t B_t = \int_0^t A_{t-}\,dB_t + \int_0^t dA_t\,B_{t-} + [A,B]_{t}^M \]
where $A_{t-}$ denotes the limit from the left of $A_t$.
\label{partialintegration}\end{Theorem}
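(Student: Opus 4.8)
The plan is to reduce the statement to the classical one-dimensional partial integration formula for scalar semimartingales, applied componentwise, and then to repackage the resulting sums using the definitions of the matrix stochastic integral and the matrix quadratic covariation given above.

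First I would fix indices $1\le i\le d$ and $1\le j\le n$ and write the $(i,j)$-entry of the product as $(A_tB_t)_{ij}=\sum_{k=1}^m A_{t,ik}B_{t,kj}$. Since $A$ and $B$ are matrix semimartingales, each scalar process $A_{ik}$ and $B_{kj}$ is a real-valued semimartingale; because the class of semimartingales is a vector space that is stable under products (itself a consequence of scalar integration by parts), each $(A_tB_t)_{ij}$ is again a semimartingale, hence $A_tB_t$ is a matrix semimartingale in the sense of the definition above. Next I would apply the scalar partial integration formula (the one-dimensional case of \citet{barndorff}) to each summand,
\[ A_{t,ik}B_{t,kj}=\int_0^t A_{s-,ik}\,dB_{s,kj}+\int_0^t B_{s-,kj}\,dA_{s,ik}+[A_{ik},B_{kj}]_t , \]
and sum over $k=1,\dots,m$ to obtain
\[ (A_tB_t)_{ij}=\sum_{k=1}^m\int_0^t A_{s-,ik}\,dB_{s,kj}+\sum_{k=1}^m\int_0^t B_{s-,kj}\,dA_{s,ik}+\sum_{k=1}^m[A_{ik},B_{kj}]_t . \]

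It then remains only to identify the three terms. By the definition of the matrix stochastic integral with $X=A_{-}$, $W=B$, $Y=I_n$, the first sum is the $(i,j)$-entry of $\int_0^t A_{s-}\,dB_s$; with $X=I_d$, $W=A$, $Y=B_{-}$, the second sum is the $(i,j)$-entry of $\int_0^t dA_s\,B_{s-}$; and by the definition of the matrix quadratic covariation the third sum is $[A,B]_{t,ij}^M$. Since $i,j$ were arbitrary, the claimed identity follows.

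There is no real analytic obstacle here — the content is entirely in the scalar integration-by-parts formula — so the step requiring the most care is purely notational: one must check that the index conventions in the definitions of $\int_0^t X_s\,dW_s\,Y_s$ (extended from Brownian motion to a general semimartingale integrator in the obvious way) and of $[A,B]^M$ line up exactly with the componentwise scalar formula, in particular which index of $dB$ is contracted against which index of $A_{-}$. One should also use the version of scalar partial integration matching the statement, i.e. the one with no separate $A_0B_0$ term, in which $[A_{ik},B_{kj}]_0=A_{0,ik}B_{0,kj}$. In the continuous setting considered throughout this thesis one has $A_{t-}=A_t$ and $B_{t-}=B_t$, but retaining the left limits keeps the statement valid verbatim in the general right-continuous case treated in the reference.
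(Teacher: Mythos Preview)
Your argument is correct. The paper itself does not supply a proof of this theorem; it simply cites \citet[Lemma 5.11.]{barndorff}, so there is nothing to compare against in the text proper. Your componentwise reduction to the scalar integration-by-parts formula is exactly the intended argument in the cited reference, and your bookkeeping with the indices matches the paper's definitions of the matrix stochastic integral and of $[A,B]^M$; your remark on the convention $[A_{ik},B_{kj}]_0=A_{0,ik}B_{0,kj}$ needed to absorb the initial term is also well placed.
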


%%%%%%%%%%%%%%%%%%%%%%%%%%%%%%%%%%%%%%%%
\section{Stochastic Differential Equations}

When we talk about (matrix variate) stochastic differential equations, we can classify between two different kind of solutions: Weak and strong solutions. Intuitively, a strong solution is constructed from a given Brownian motion and hence a `function' of that Brownian motion.

\begin{Definition}(Weak and Strong Solutions, cf. \citet[Chapter IX, Definition 1.5]{revuz})
Let $(\Omega,\calg,(\calg_t)_{t\in\bbr_+},Q)$ be a filtered probability space satisfying the usual conditions and consider the stochastic differential equation
\beqq dX_t=b(t,X_t)\,dt + \sigma(t,X_t)\,dW_t,\quad X_0=x_0 \label{eq_sde}\eeqq
where $b:\bbr_+\times\calm_{m,n}(\bbr)\rightarrow\calm_{m,n}(\bbr)$ and $\sigma:\bbr_+\times\calm_{m,n}(\bbr)\rightarrow\calm_{m,p}(\bbr)$ are measurable functions, $x_0\in\calm_{m,n}(\bbr)$ and $W$ is a $p\times n$-dimensional Brownian motion.
\begin{itemize}
\item A pair $(X,W)$ of $\calf_t$-adapted continuous processes defined on $(\Omega,\calg,(\calg_t)_{t\in\bbr_+},Q)$ is called\\ a \emph{solution} of (\ref{eq_sde}) on $[0,T)$, $T>0$, if $W$ is a $(\calg_t)_{t\in\bbr_+}$-Brownian motion and 
\[ X_t=x_0+\int_0^t b(s,X_s)\,ds+\int_0^t \sigma(s,X_s)\,dW_s \quad\fa t\in[0,T) \]
\item Moreover, the pair $(X,W)$ is said to be a \emph{strong solution} of (\ref{eq_sde}), if $X$ is adapted to the filtration $(\calg_t^W)_{t\in\bbr_+}$, where $\calg_t^W=\sigma_c(W_s,s\leq t)$ is the $\sigma$-algebra generated from $W_s$,$s\leq t$, that is completed with all $Q$-null sets from $\calg$.
\item A solution $(X,W)$ which is not strong will be termed a \emph{weak solution} of (\ref{eq_sde}).
\end{itemize}
\end{Definition}

To define the \emph{probability law $P^X$ of a stochastic process}  $X:\bbr_+\times\Omega\rightarrow\calm_{m,n}(\bbr)$, we consider, according to \citet{oek}, $X$ as a random matrix on the functional space $(\calm_{m,n}(\bbr)^{\bbr_+},\wh{\calf})$ with $\sigma$-algebra $\wh{\calf}$ that is generated by the cylinder sets
\[ \{\omega\in\Omega:\,X_{t_1}(\omega)\in F_1,\ldots,X_{t_k}(\omega)\in F_k\} \]
where $F_i\subset\calm_{m,n}(\bbr)$ are Borel sets and $k\in\bbn$.\\ $X:(\Omega,\calf,P)\rightarrow(\calm_{m,n}(\bbr)^{\bbr_+},\wh{\calf})$ is then a measurable function with law 
$P^X$.\\ 

\begin{Definition}(Uniqueness, \citet[Chapter IX, Definition 1.3]{revuz})
Consider again the stochastic differential equation (\ref{eq_sde}).
\begin{itemize}
\item It is said that \emph{pathwise uniqueness} holds for (\ref{eq_sde}), if for every two solutions $(X,W)$ and $(X',W')$ defined on the same filtered probability space, $X_0=X_0'$ and $W=W'$ a.s. implies that $X$ and $X'$ are \emph{indistinguishable}, i.e. for $P$-almost all $\omega$ it holds that $X_t(\omega)=X_t'(\omega)$ for every $t$, or equivalently
\[ P\left(\sup_{t\in[0,\infty)} |X_t-X_t'|>0\right)=0 \]
\item There is \emph{uniqueness in law} for (\ref{eq_sde}), if whenever $(X,W)$ and $(X',W')$ are two solutions with possibly different Brownian motions $W$ and $W'$ (in particular if $(X,W)$ and $(X',W')$ are defined on two different filtered probability spaces) and $X_0\eqd X_0'$, then the laws $P^X$ and $P^{X'}$ are equal. In other words, $X$ and $X'$ are two versions of the same process, i.e. they have the same finite dimensional distributions (see \citet[Chapter I, Definition 1.6]{revuz}).
\end{itemize}
\end{Definition}

\begin{Remark}\
\begin{enumerate}
\item As \citet[Proposition 1]{yamada1} have shown, pathwise uniqueness implies uniqueness in law, which is not true conversely.
\item If pathwise uniqueness holds for (\ref{eq_sde}), then every solution of (\ref{eq_sde}) is strong (see \citet[Theorem 1.7]{revuz}).
\item The definition of pathwise uniqueness implies that there exists at most one strong solution for (\ref{eq_sde}) up to indistinguishability.
\item According to \citet[p. 59f.]{sko}, the stochastic differential equation (\ref{eq_sde}) always has a weak (but not necessarily unique) solution if $b$ and $\sigma$ are continuous functions. If in this situation pathwise uniqueness holds for (\ref{eq_sde}), then there exists a unique strong solution up to indistinguishability.
\end{enumerate}\label{remark_pu}\end{Remark}

Similar to the theory of ordinary differential equations, the function on the right hand side of a stochastic differential equation being locally Lipschitz is sufficient in order to guarantee (strong) existence on a nonempty (stochastic) interval and (pathwise) uniqueness.

\begin{Theorem}(Existence of Solutions of SDEs driven by a continuous Semimartingale, cf. \citet[Theorem 6.7.3.]{stelzer})
Let $U$ be an open subset of $\calm_{d,n}(\bbr)$ and $(U_n)_{n\in\bbn}$ a sequence of convex closed sets such that $U_n\subset U,\,U_n\subseteq U_{n+1} \fa n\in\bbn$ and $\bigcup_{n\in\bbn} U_n = U$. Assume that $f:U\rightarrow\calm_{d,m}(\bbr)$ is a locally Lipschitz function and $Z$ in $\calm_{m,n}(\bbr)$ is a continuous semimartingale. Then for each $U$-valued $\calf_0$-measurable initial value $X_0$ there exist a stopping time $T$ and a \emph{unique $U$-valued strong solution} $X$ to the stochastic differential equation
\beqq dX_t=f(X_t)dZ_t \label{eq_exsde}\eeqq
up to the time $T>0$ a.s., i.e. on the stochastic interval $[0,T)$.
On $T<\infty$ we have that either $X$ hits the boundary $\partial U$ of $U$ at $T$, i.e. $X_T\in\partial U$ or explodes, i.e. $\limsup_{t\rightarrow T,t<T} ||X_t||=\infty$. If $f$ satifies the \emph{linear growth condition}
\[ ||f(X)||^2 \leq K(1+||X||^2) \]
with some constant $K\in\bbr_+$, then no explosion can occur.
\label{theorem_exsde}\end{Theorem}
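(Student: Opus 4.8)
The plan is to reduce the statement to a classical existence-and-uniqueness result for SDEs driven by continuous semimartingales, obtained by a localization-and-pasting argument, and then to handle the $U$-valued constraint and the explosion dichotomy separately. First I would address \emph{local existence and pathwise uniqueness}: since $f$ is locally Lipschitz on $U$, for each $n$ I can extend $f|_{U_n}$ to a \emph{globally} Lipschitz function $f_n:\calm_{d,n}(\bbr)\to\calm_{d,m}(\bbr)$ (using that $U_n$ is closed and convex, so a Kirszbraun-type or simple truncation extension works), and invoke the standard global existence/uniqueness theorem for $dX^{(n)}_t=f_n(X^{(n)}_t)\,dZ_t$ with initial value $X_0$ (valid because $X_0$ is $\calf_0$-measurable and $Z$ is a continuous semimartingale; see e.g.\ the Picard iteration / fixed-point argument in \citet{revuz} or \citet{protter}). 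Define the stopping time $T_n:=\inf\{t\ge 0: X^{(n)}_t\notin U_n^\circ\}$ (interior of $U_n$). On $[0,T_n)$ the process $X^{(n)}$ stays in $U_n\subseteq U$, where $f_n=f$, so $X^{(n)}$ solves \eqref{eq_exsde} there; by pathwise uniqueness of the global equations, the $X^{(n)}$ are consistent, i.e.\ $X^{(n)}=X^{(m)}$ on $[0,T_n)$ for $m\ge n$, and the $T_n$ are nondecreasing. Set $T:=\sup_n T_n$ and let $X$ be the common value on $[0,T)$; then $X$ is a $U$-valued strong solution on $[0,T)$, and it is $\calf_0^Z$-adapted because each $X^{(n)}$ is (pathwise uniqueness forces every solution to be strong, cf.\ Remark \ref{remark_pu}). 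Uniqueness of $X$ on $[0,T)$ follows again from pathwise uniqueness: any other $U$-valued solution $\tilde X$ on $[0,\tilde T)$ must, by localizing at exit times from $U_n$, coincide with $X^{(n)}$, hence with $X$.

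Next I would establish $T>0$ a.s. and the \emph{boundary-or-explosion dichotomy} at a finite $T$. That $T>0$ a.s.\ is immediate: since $X_0\in U$ and $\bigcup_n U_n=U$, a.s.\ $X_0\in U_N^\circ$ for some random $N$, and by continuity of $Z$ (hence of $X^{(N)}$) we have $T_N>0$, so $T\ge T_N>0$. For the dichotomy, suppose $T<\infty$ on some event and that $X$ does \emph{not} explode there, i.e.\ $\limsup_{t\uparrow T}\|X_t\|<\infty$. Then along $[0,T)$ the path of $X$ stays in a bounded set; I must show $X_t$ approaches $\partial U$ as $t\uparrow T$. If not, then by boundedness there is a compact $L\subset U$ with $X_t\in L$ for a sequence $t_k\uparrow T$, and since $L$ is covered by the increasing $U_n$, compactness gives $L\subseteq U_m^\circ$ for some $m$; but then $T_m>T$ would contradict $T=\sup_n T_n$ once one checks the solution can be continued past $t_k$ staying in $U_m$ — this is where I must be a little careful, using the \emph{continuity} of the sample path of the driving semimartingale on the closed interval $[0,T]$ together with the fact that the solution of the global equation $dX^{(m)}=f_m(X^{(m)})dZ$ exists for all time, so $X^{(m)}_T$ is well-defined and, by continuity, $X^{(m)}_{t}\to X^{(m)}_T$; if $X^{(m)}_T\in U_m^\circ$ one gets $T_m>T$, the contradiction. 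The case $X^{(m)}_T\in\partial U_m$ but $X^{(m)}_T\in U^\circ$ is excluded by choosing the sets $U_n$ to exhaust $U$ and noting $X_t$ would then lie in some $U_{m'}^\circ$. Hence either $X_T\in\partial U$ or $\limsup_{t\uparrow T}\|X_t\|=\infty$.

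Finally, the \emph{no-explosion under linear growth} claim: assuming $\|f(X)\|^2\le K(1+\|X\|^2)$ on $U$, I would run the standard moment estimate. Extend $f$ to all of $\calm_{d,n}(\bbr)$ preserving the linear growth bound; on $[0,T\wedge S_R)$ where $S_R=\inf\{t:\|X_t\|\ge R\}$, apply the It\^o isometry / Burkholder–Davis–Gundy inequality to $\|X_t\|^2$ to obtain, after taking expectations and using Gronwall's lemma, a bound $E[\sup_{s\le t\wedge S_R}\|X_s\|^2]\le C(t)(1+E\|X_0\|^2)$ with $C(t)$ independent of $R$; letting $R\to\infty$ by monotone convergence shows $\sup_{s\le t}\|X_s\|<\infty$ a.s.\ for every finite $t$, so no explosion occurs and, by the dichotomy, on $T<\infty$ the solution must hit $\partial U$. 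The main obstacle is the careful gluing in the second paragraph — making the pasted local solutions into a single well-defined maximal solution and proving the precise form of the dichotomy at $T$ without circularity — since the underlying one-shot existence/uniqueness for semimartingale-driven SDEs is classical and can be cited from \citet{revuz}, \citet{protter}, or \citet{stelzer}. I would handle the gluing cleanly by phrasing it as: the family $\{(X^{(n)},T_n)\}$ is consistent, $T:=\lim T_n$, and any claimed extension past $T$ staying in $U$ would stay in some $U_m$ by a compactness/continuity argument, contradicting maximality.
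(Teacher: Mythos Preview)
The paper does not give its own proof of this theorem: it is stated with an explicit citation (``cf.\ \citet[Theorem~6.7.3.]{stelzer}'') and no proof environment follows. So there is nothing in the paper to compare your argument against; you are essentially reconstructing the proof that the paper defers to the reference.

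Your localization-and-pasting strategy is the standard one and is correct in outline. One technical point deserves attention: you write that $f|_{U_n}$ can be extended to a globally Lipschitz map ``using that $U_n$ is closed and convex'', but local Lipschitz continuity of $f$ on $U$ does \emph{not} imply that $f$ is Lipschitz on $U_n$ when $U_n$ is unbounded (and in the paper's main application, Theorem~\ref{theorem_exwp1}, the sets $U_n=\{M\in\cals_p^+:\lambda_{\min}(M)\ge 1/n\}$ are indeed unbounded). The clean fix is to localize with the compact sets $U_n\cap\{X:\|X\|\le n\}$ instead; on compact subsets of $U$ local Lipschitz gives Lipschitz, the Kirszbraun/truncation extension then works, and the exit-time argument goes through unchanged while simultaneously tracking both approach to $\partial U$ and growth of $\|X\|$, which also streamlines your dichotomy argument. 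With that adjustment your proof plan is sound.
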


\begin{Remark}
With the term unique it is meant that there holds pathwise uniqueness for (\ref{eq_exsde}). In other words, every two solutions $(X,Z)$ and $(X',Z)$ of (\ref{eq_exsde}) defined on the same probability space and with the same continuous semimartingale $Z$ and the same initial value are indistinguishable.
\end{Remark}

\begin{Definition}(Local Lipschitz, cf. \citet[Definition 6.7.1.]{stelzer})
Let $(U,||.||_U)$, $(V,||.||_V)$ be two normed spaces and $W\subseteq U$ be open. Then a function $f:W\rightarrow V$ is called locally Lipschitz, if for every $x\in W$ there exists an open neighbourhood $\mathcal{U}(x)\subset W$ and a constant $C(x)\in\bbr_+$ such that
\beqq ||f(z)-f(y)||_V\leq C(x)||z-y||_U \quad\fa z,y\in \mathcal{U}(x) \eeqq
$C(x)$ is said to be a local Lipschitz coefficient.
If there is a $K\in\bbr_+$ such that $C(x)=K$ can be chosen for all $x\in W$, $f$ is called globally Lipschitz.
\end{Definition}

If the process $Z$ from Theorem \ref{theorem_exsde} is a continous Lévy process (Brownian motion with drift), it can be shown that the solution $X$ of (\ref{eq_exsde}) is a Markov process. Before we state this in a theorem, we give a general definition of the term 'Markov process' and a necessary technical condition on our probability space.

\begin{Definition}(Markov Process, cf. \citet[Definition 6.7.7.]{stelzer})
Let $U\subseteq\calm_p(\bbr)$ be open and $Z$ be a process with values in $U$ which is adapted to a filtration $(\calf_t)_{t\in\bbr_+}$.
\begin{enumerate}
\item $Z$ is called a \emph{Markov process} with respect to $(\calf_t)_{t\in\bbr_+}$, if
\[ E(g(Z_u)|\calf_t)=E(g(Z_u)|Z_t) \]
for all $t\in\bbr_+,\,u\geq t$ and $g:U\rightarrow\bbr$ bounded and Borel measurable.
\item Let $Z$ be a Markov process and define for all $s,t\in\bbr_+,s\leq t$ the \emph{transition functions}
$P_{s,t}(Z_s,g):=E(g(Z_t)|Z_s)$ with $g:U\rightarrow\bbr$ bounded and Borel measurable. If 
\[ P_{s,t}=P_{0,t-s}=:P_{t-s} \textit{ for all } s,t\in\bbr_+,s\leq t \]
then $Z$ is said to be a \emph{time homogeneous Markov process}.
\item A time homogeneous Markov process is called a \emph{strong Markov} process, if
\[ E(g(Z_{T+s})|\calf_T)=P_s(Z_T,g)=E(g(Z_{T+s})|Z_T) \]
for all $g:U\rightarrow\bbr$ bounded and Borel measurable and a.s. finite stopping times $T$.
\end{enumerate}
\end{Definition}

At the beginning of this chapter we assumed $(\Omega,\calf,(\calf_t)_{t\geq 0},P)$ to be a filtered probability space where $(\calf_t)_{t\geq 0}$ is right continuous and all $\calf_t$ are complete. Now, we need to \emph{enlarge} our given probability space in order to get arbitrary initial values. 

\begin{Definition}(Enlargement of a probability space, cf. \citet{protter})\ \\
Let $(\Omega,\calf,(\calf_t)_{t\geq 0},P)$ be a filtered probability space where $(\calf_t)_{t\geq 0}$ is right continuous, i.e. $\bigcap_{s>t}\calf_s=\calf_t$ for every $t\geq 0$, and all $\calf_t$ are completed with sets from $\calf$ having $P$-probability zero and $U\subseteq\calm_p(\bbr)$ be an open subset of $\calm_p(\bbr)$.\\
The probability space $(\ov{\Omega},\ov{\calf},(\ov{\calf}_t)_{t\geq 0},(\ov{P}^y)_{y\in U})$ with
\[ \ov{\Omega}=U\times\Omega,\quad \ov{\calf}_t=\bigcap_{u>t}\sigma(\calb(U)\times\calf_u),\quad \ov{\calf}=\sigma(\calb(U)\times\calf),\quad \ov{P }^y=\delta_y\times P \]
is called the \emph{Enlargement} of $(\Omega,\calf,(\calf_t)_{t\geq 0},P)$. The measure $\delta_y$ is the Dirac measure w.r.t. $y\in U$, i.e. for $A\subseteq U$ it holds that $\delta_y(A)=1$ if and only if $y\in A$. A random matrix $Z:\Omega\rightarrow\calm_p(\bbr)$ is \emph{extended} to $\ov{\Omega}$ by setting $Z((y,\omega))=Z(\omega)$ for all $(y,\omega)\in\ov{\Omega}$.
\end{Definition}

Eventually we are able to state

\begin{Theorem}(Markov Property of Solutions of SDEs driven by a Brownian Motion, cf. \citet[Theorem 6.7.8.]{stelzer})
Recall the Assumptions of Theorem \ref{theorem_exsde} (without the linear growth condition), but let this time $Z$ be a Brownian motion with drift in $\calm_{m,n}(\bbr)$, i.e. $Z\thicksim\calbm_{m,n}$. Suppose further that $T=\infty$ for every initial value $x_0\in U$. Consider the enlarged probability space $(\ov{\Omega},\ov{\calf},(\ov{\calf}_t)_{t\geq 0},(\ov{P}^y)_{y\in U})$ and define $X_0$ by $X_0((y,\omega)):=y$ for all $y\in U$. Then the unique strong solution $X$ of
\beqq dX_t=f(X_t)dZ_t \eeqq
is a time homogeneous strong Markov process on $U$ under every probability measure of the family $(\ov{P}^y)_{y\in U}$.
\label{markovsolution}\end{Theorem}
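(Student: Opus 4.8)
The statement asserts that the unique strong solution $X$ of $dX_t=f(X_t)\,dZ_t$ on the enlarged space, started from $X_0((y,\omega))=y$, is a time homogeneous strong Markov process under every $\ov{P}^y$. I would prove this in three stages: (1) establish time homogeneity of the solution via a shift argument for $Z$; (2) upgrade the ordinary Markov property to the strong Markov property using an approximation of a stopping time by discrete-valued stopping times together with right-continuity of the filtration; and (3) carry everything through on the enlarged space, checking that the family $(\ov{P}^y)_{y\in U}$ fits together into a genuine Markov family. Throughout I would lean on the pathwise uniqueness guaranteed by Theorem~\ref{theorem_exsde} — this is the engine that makes two solutions built from the same (shifted) Brownian data coincide.

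\textbf{Step 1: flow property and time homogeneity.} Fix $s\ge 0$ and consider the shifted process $\wt Z_t:=Z_{s+t}-Z_s$, which is again a Brownian motion with drift, $\wt Z\thicksim\calbm_{m,n}$, independent of $\ov{\calf}_s$ and adapted to the shifted filtration $\wt{\calf}_t:=\ov{\calf}_{s+t}$. By Theorem~\ref{theorem_exsde} the SDE $d\wt X_t=f(\wt X_t)\,d\wt Z_t$, $\wt X_0=X_s$ (which is $\ov{\calf}_s$-measurable and $U$-valued), has a unique $U$-valued strong solution, and no explosion occurs because $T=\infty$ was assumed for every initial value. On the other hand, $t\mapsto X_{s+t}$ solves this same SDE with this same initial value and driving semimartingale. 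Pathwise uniqueness then forces $X_{s+t}=\wt X_t$ for all $t$, a.s. Writing $\Phi_t(x,\cdot)$ for the (measurable) solution map $x\mapsto$ (solution started at $x$, driven by a generic Brownian path), this says $X_{s+t}=\Phi_t(X_s,\wt Z)$ with $\wt Z$ independent of $\ov{\calf}_s$ and distributed like $Z$ itself. Hence for bounded Borel $g$,
\[
E^{\ov P^y}\!\left(g(X_{s+t})\,\middle|\,\ov{\calf}_s\right)=E^{\ov P^y}\!\left(g(\Phi_t(x,\wt Z))\right)\Big|_{x=X_s}=:(P_t g)(X_s),
\]
and the right-hand side depends on $\omega$ only through $X_s$. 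This simultaneously gives the Markov property relative to $(\ov{\calf}_t)$ and the fact that the transition kernel $P_t$ does not depend on $s$, i.e. time homogeneity; the Chapman--Kolmogorov identity $P_{t+u}=P_tP_u$ follows from iterating the flow property $\Phi_{t+u}(x,Z)=\Phi_u(\Phi_t(x,Z),\,\theta_t Z)$.

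\textbf{Step 2: from Markov to strong Markov.} Let $T$ be an a.s. finite stopping time. Approximate it from above by $T_n:=\sum_{k\ge 1}\tfrac{k}{2^n}\mathbf 1_{\{(k-1)/2^n\le T<k/2^n\}}$, so $T_n\downarrow T$ and each $T_n$ takes countably many values. On $\{T_n=k/2^n\}$ the deterministic-time Markov property from Step~1 gives $E^{\ov P^y}(g(X_{T_n+s})\mid\ov{\calf}_{T_n})=(P_sg)(X_{T_n})$; summing over $k$ (using that $\{T_n=k/2^n\}\in\ov{\calf}_{k/2^n}$ and $\ov{\calf}_{T_n}\cap\{T_n=k/2^n\}=\ov{\calf}_{k/2^n}\cap\{T_n=k/2^n\}$) yields $E^{\ov P^y}(g(X_{T_n+s})\mid\ov{\calf}_{T_n})=(P_sg)(X_{T_n})$. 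Now let $n\to\infty$: by continuity of $X$ and of $x\mapsto (P_sg)(x)$ for bounded \emph{continuous} $g$ (Feller-type continuity, itself obtained from continuous dependence of $\Phi_s(x,Z)$ on $x$ via pathwise uniqueness and a localization/Gronwall estimate for the locally Lipschitz $f$), the right-hand side converges to $(P_sg)(X_T)$; on the left, $X_{T_n+s}\to X_{T+s}$ a.s.\ and $\ov{\calf}_T=\bigcap_n\ov{\calf}_{T_n}$ by right-continuity of the enlarged filtration, so dominated convergence for conditional expectations gives $E^{\ov P^y}(g(X_{T+s})\mid\ov{\calf}_T)=(P_sg)(X_T)$. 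A monotone-class argument extends this from continuous bounded $g$ to all bounded Borel $g$, and since $(P_sg)(X_T)$ is $\sigma(X_T)$-measurable we also get $E^{\ov P^y}(g(X_{T+s})\mid\ov{\calf}_T)=E^{\ov P^y}(g(X_{T+s})\mid X_T)$, which is precisely the strong Markov property.

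\textbf{Step 3: the enlarged space and the family $(\ov{P}^y)_{y\in U}$.} Here one checks the bookkeeping: under $\ov P^y=\delta_y\times P$ the initial value $X_0=y$ a.s., the extended process $Z((y,\omega))=Z(\omega)$ is still a Brownian motion with drift with respect to $(\ov{\calf}_t)$, and the solution $X$ is the same $\ov{\calf}_t$-adapted process for every $y$ (only its starting point changes with $y$). The kernels $P_t$ constructed in Step~1 are visibly independent of $y$, so $(\ov P^y)_{y\in U}$ is a bona fide Markov family with transition semigroup $(P_t)_{t\ge 0}$, and Steps~1--2 apply verbatim under each $\ov P^y$.

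\textbf{Main obstacle.} The routine part is the discretization in Step~2; the genuinely delicate point is the Feller-type continuity $y\mapsto (P_sg)(y)$ needed to pass to the limit, because $f$ is only \emph{locally} Lipschitz, so one cannot invoke a global Gronwall bound directly. The fix is to localize: on the event that the solution from $y$ stays in a fixed compact $U_n\subset U$ up to time $s$, $f$ is Lipschitz and a standard estimate gives $L^2$-continuity of $x\mapsto\Phi_s(x,Z)$; then one controls the (small) probability of exiting $U_n$ using $T=\infty$ a.s.\ and lets $n\to\infty$. Managing this localization uniformly enough to get continuity of $P_sg$ — rather than merely measurability — is the crux, and it is exactly the place where the hypothesis "$T=\infty$ for every initial value" is essential.
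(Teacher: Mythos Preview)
The paper does not actually give its own proof of this theorem: it is stated with a citation ``cf.\ \citet[Theorem 6.7.8.]{stelzer}'' and no \texttt{proof} environment follows. So there is nothing in the paper to compare your argument against line by line; your proposal is effectively a self-contained sketch of the standard proof.

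That said, your Step~2 is more laborious than necessary, and the ``main obstacle'' you isolate (Feller-type continuity of $y\mapsto P_s g(y)$ for locally Lipschitz $f$) is an artifact of the route you chose rather than an intrinsic difficulty. The cleaner argument --- and the one typically used in the references the paper relies on (Protter, Stelzer) --- bypasses the discrete-stopping-time approximation entirely by invoking the \emph{strong} Markov property of the driving Brownian motion $Z$ directly. For an a.s.\ finite stopping time $T$, the process $\wt Z_t:=Z_{T+t}-Z_T$ is again $\calbm_{m,n}$, independent of $\ov\calf_T$. Then $t\mapsto X_{T+t}$ solves $dY_t=f(Y_t)\,d\wt Z_t$ with $Y_0=X_T$, and pathwise uniqueness together with the measurable solution map $\Phi$ (which exists because the solution is strong) gives $X_{T+t}=\Phi_t(X_T,\wt Z)$. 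Conditioning on $\ov\calf_T$ and using independence of $\wt Z$ yields
\[
E^{\ov P^y}\bigl(g(X_{T+s})\,\big|\,\ov\calf_T\bigr)=(P_s g)(X_T)
\]
immediately, for every bounded Borel $g$, with no limiting procedure and hence no need for continuity of $P_s g$. Your Step~1 already contains exactly this mechanism at deterministic times; you just need to rerun it with $T$ a stopping time, which is legitimate because the strong Markov property is available for $Z$ itself. Your Steps~1 and~3 are fine as written.
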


The Girsanov Theorem is a powerful tool that gives us the opportunity to construct a new probability measure $\wh{Q}$ such that a drift changed $P$-Brownian motion (that is not a Brownian motion under the probability measure $P$ anymore) is a Brownian motion under the new measure $\wh{Q}$. Before we state the Girsanov Theorem we make

\begin{Definition}[Stochastic Exponential]
Let $X$ be a stochastic process. The unique strong solution $Z=\mathcal{E}(X)$ of
\beqq dZ_t=Z_t\,dX_t,\quad Z_0=1 \label{eq_se}\eeqq
is called stochastic exponential of $X$.
\end{Definition}

From Theorem \ref{theorem_exsde} we get immediately that (\ref{eq_se}) has a unique strong solution.

\begin{Theorem}[Matrix Variate Girsanov Theorem]
Let $T>0$, $B\thicksim\calbm_p$ and $U$ be an adapted, continuous stochastic process with values in $\calm_p(\bbr)$ such that
\beqq \left(\mathcal{E}\left(\tr\left(-\int_0^t U_s^\rmt\,dB_s\right)\right)\right)_{t\in[0,T]} \label{girsanov_martingale}\eeqq
is a martingale, or, what is sufficient for (\ref{girsanov_martingale}), but not necessary, such that the \emph{Novikov condition} is satisfied
\beqq E\left(\etr\left(\frac{1}{2}\int_0^T U_t^\rmt U_t\,dt\right)\right)<\infty \label{eq_novikov}\eeqq
Then 
\beqq \wh{Q}=\int\mathcal{E}\left(\tr\left(-\int_0^T U_t^\rmt\,dB_t\right)\right)\,dP \label{eq_q}\eeqq
is an equivalent probability measure, and
\beqq \wh{B_t}=\int_0^t U_s\,ds+B_t \label{eq_whb}\eeqq
is a $\wh{Q}$-Brownian motion on $[0,T)$.
\label{girsanov}\end{Theorem}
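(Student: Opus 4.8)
\section*{Proof proposal}

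The plan is to reduce the matrix-variate assertion to the classical $d$-dimensional Girsanov theorem by vectorizing everything. First I would set $\beta_t:=\mathrm{vec}(B_t)$, which is a standard $p^2$-dimensional Brownian motion because $B\thicksim\calbm_p$ consists of $p^2$ independent one-dimensional Brownian motions, and $u_s:=\mathrm{vec}(U_s)$, an adapted continuous $\bbr^{p^2}$-valued process (hence locally bounded, so the It\^o integral below is a well-defined continuous local martingale). By linearity of the stochastic integral and Lemma~\ref{lemma_trvec}(i), $\tr(U_s^\rmt\,dB_s)=\mathrm{vec}(U_s)^\rmt\,\mathrm{vec}(dB_s)=u_s^\rmt\,d\beta_s$, so that
\[ \tr\!\left(-\int_0^t U_s^\rmt\,dB_s\right)=-\int_0^t u_s^\rmt\,d\beta_s, \]
a continuous local martingale with quadratic variation $\int_0^t\|u_s\|^2\,ds=\int_0^t\tr(U_s^\rmt U_s)\,ds$, and therefore, by the definition of the stochastic exponential ($dZ_t=Z_t\,dX_t$, $Z_0=1$) and It\^o's formula,
\[ \mathcal{E}\!\left(\tr\!\left(-\int_0^{\cdot} U_s^\rmt\,dB_s\right)\right)_t = \exp\!\left(-\int_0^t u_s^\rmt\,d\beta_s-\frac{1}{2}\int_0^t\|u_s\|^2\,ds\right)=:M_t. \]

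Next I would dispose of the two hypotheses. The Novikov condition as written, $E(\etr(\frac{1}{2}\int_0^T U_t^\rmt U_t\,dt))<\infty$, equals $E(\exp(\frac{1}{2}\int_0^T\|u_t\|^2\,dt))<\infty$ since $\etr(\frac{1}{2}\int_0^T U_t^\rmt U_t\,dt)=\exp(\frac{1}{2}\int_0^T\tr(U_t^\rmt U_t)\,dt)$; this is precisely the classical Novikov criterion for the integrand $u$, which guarantees that $(M_t)_{t\in[0,T]}$ is a genuine martingale --- giving the claim that~(\ref{eq_novikov}) is sufficient for~(\ref{girsanov_martingale}), while the standard counterexamples show it is not necessary. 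In either case I may assume $(M_t)_{t\in[0,T]}$ is a true martingale with $M_0=1$ and $M_t>0$ everywhere.

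Then I would invoke the classical $d$-dimensional Girsanov theorem (see e.g. \citet[Chapter VIII]{revuz}) applied to $\beta$ and $M$: the prescription $d\wh{Q}=M_T\,dP$ defines a probability measure, equivalent to $P$ because $M_T>0$ $P$-a.s., which is exactly~(\ref{eq_q}); and under $\wh{Q}$ the process $\wh{\beta}_t:=\beta_t+\int_0^t u_s\,ds$ (the sign of the drift being fixed by $\theta_s=-u_s$ in $-\int u^\rmt d\beta=\int\theta^\rmt d\beta$) is a standard $p^2$-dimensional Brownian motion on $[0,T)$. Finally I would devectorize: since $\mathrm{vec}$ commutes with both Lebesgue and It\^o integration and the matrix integral $\int_0^t U_s\,ds$ was defined entrywise, $\mathrm{vec}^{-1}(\wh{\beta}_t)=B_t+\int_0^t U_s\,ds=\wh{B}_t$; and as $\mathrm{vec}(\wh{B})=\wh{\beta}$ is a standard $p^2$-dimensional $\wh{Q}$-Brownian motion, its entries are independent one-dimensional $\wh{Q}$-Brownian motions, i.e. $\wh{B}\thicksim\calbm_p$ under $\wh{Q}$ on $[0,T)$, which is~(\ref{eq_whb}).

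Once the vectorization is in place the content is entirely classical, so there is no deep obstacle; the care points are purely bookkeeping: (i) matching the sign and the $-\frac{1}{2}[\,\cdot\,]$ correction term of the paper's stochastic exponential with the form of the Radon--Nikodym density in the textbook Girsanov statement; (ii) verifying that $\mathrm{vec}(B)$ really is a standard $p^2$-dimensional Brownian motion and that vectorization commutes with the stochastic and ordinary integrals, so that $\mathrm{vec}^{-1}$ of the transformed vector process is the claimed matrix process; and (iii) keeping track of the $\sigma$-field on which the equivalence $\wh{Q}\sim P$ holds (on $\calf_T$ positivity of $M_T$ gives genuine equivalence) and that the Brownian property of $\wh{B}$ is asserted only on $[0,T)$.
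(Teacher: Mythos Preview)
Your proposal is correct and follows essentially the same route as the paper: vectorize $B$ and $U$ to land in $\bbr^{p^2}$, observe via Lemma~\ref{lemma_trvec} that the matrix trace integral and the Novikov condition become their classical $p^2$-dimensional counterparts, apply the standard multivariate Girsanov theorem, and devectorize. The only cosmetic differences are that the paper sets $V=\mathrm{vec}(-U)$ to absorb the sign up front and cites \citet[Corollary 16.25]{kalle} rather than \citet{revuz}, whereas you track the sign through $\theta_s=-u_s$; your write-up is otherwise a bit more explicit about the stochastic exponential and the bookkeeping.
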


\begin{proof}
We use the multivariate Girsanov Theorem of \citet[Corollary 16.25]{kalle}. \\
The process $V:=\mathrm{vec}(-U)\in\bbr^{p^2}$ is according to \citet[Proposition 4.8, Chapter I]{revuz} progressively measurable and $B^v:=\mathrm{vec}(B)$ is a Brownian motion on $\bbr^{p^2}$. The Novikov condition is then
\beao
E\left(exp\left(\frac{1}{2}\int_0^T \sum_{i=1}^{p^2} V_{t,i}^2\,dt\right)\right) &=& E\left(exp\left(\frac{1}{2}\int_0^T \tr(U_t^\rmt U_t)\,dt\right)\right) \\
&=& E\left(\etr\left(\frac{1}{2}\int_0^T U_t^\rmt U_t\,dt\right)\right)<\infty
\eeao
With \citet[Corollary 16.25]{kalle} the new measure $Q$ is then
\beao
\wh{Q}=\int\mathcal{E}\left(\int_0^T V_t^\rmt\,dB^v_t\right)\,dP=\int\mathcal{E}\left(\tr\left(-\int_0^T U_t^\rmt\,dB_t\right)\right)\,dP
\eeao
and
\[ \wt{B^v_t}=B^v_t-\int_0^t V_s\,ds\]
is a $\wh{Q}$-Brownian motion for $t\in[0,T]$ and with values in $\bbr^{p^2}$.
Hence,
\[ \wh{B_t}=\mathrm{vec}^{-1}(\wt{B^v_t})=\int_0^t U_s\,ds+B_t \]
is a $\wh{Q}$-Brownian motion for $t\in[0,T]$ and with values in $\calm_p(\bbr)$.
\end{proof}

%%%%%%%%%%%%%%%%%%%%%%%%%%%%%%%%%%%%%%%%
\section{McKean's Argument}

L\'evy's Theorem gives us an easy way to decide whether a continuous local martingale is a Brownian motion.

\begin{Theorem}[L\'evy's Theorem]\ \\
Let $B$ be a $p\times p$ dimensional continuous local martingale such that
\[ [B_{ij},B_{kl}]_t= \left\{ \begin{array}{cc} t, \, & \textnormal{if } i=k \textnormal{ and } j=l  \\ 0, & \textnormal{else} \end{array} \right\} \]
for all $i,j,k\in \{1,\ldots,p\}$ and $B_0=0$. Then B is a $p\times p$ dimensional Brownian motion, $B\thicksim \calbm_p$.
\label{theorem_levy}\end{Theorem}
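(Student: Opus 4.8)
The plan is to reduce the matrix-valued statement to the classical multivariate L\'evy characterization of Brownian motion, which is exactly the kind of ``componentwise'' transfer the paper has been setting up. First I would form the $p^2$-dimensional vector process $M_t := \mathrm{vec}(B_t)$, whose components are a relabelling of the entries $B_{ij}$. By the definition of a matrix variate local martingale each $B_{ij}$ is a real continuous local martingale with $B_{ij,0}=0$, hence $M$ is an $\bbr^{p^2}$-valued continuous local martingale started at $0$. Next I would translate the hypothesis on the matrix quadratic covariations into the statement that $[M^{(a)},M^{(b)}]_t = \delta_{ab}\,t$ for the reindexed components $a,b\in\{1,\ldots,p^2\}$: under the bijection $(i,j)\leftrightarrow a$ the condition ``$t$ if $i=k$ and $j=l$, $0$ else'' says precisely that the covariation matrix of $M$ is $t\,I_{p^2}$.

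With $M$ an $\bbr^{p^2}$-valued continuous local martingale, $M_0=0$, and $[M^{(a)},M^{(b)}]_t=\delta_{ab}t$, the classical vector-valued L\'evy theorem (e.g.\ \citet[Theorem 3.6, Chapter IV]{revuz}) gives that $M$ is a standard $p^2$-dimensional Brownian motion; equivalently its components are $p^2$ independent one-dimensional Brownian motions. Applying $\mathrm{vec}^{-1}$ and reading off the definition of $\calbm_p$, the matrix $B=\mathrm{vec}^{-1}(M)$ is a $p\times p$ matrix of independent one-dimensional Brownian motions, i.e.\ $B\thicksim\calbm_p$, which is the claim.

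I do not expect a genuine obstacle here: the only points requiring a little care are bookkeeping ones, namely (a) checking that the index bijection carries the matrix-covariation hypothesis onto the identity-matrix covariation structure needed by the scalar/vector theorem, and (b) making sure the notion of ``local martingale'' used in the Definition matches the hypotheses of the cited L\'evy theorem (continuity, reducing stopping times), both of which are immediate from the Definitions given earlier in the chapter. If one prefers to avoid quoting the vector version as a black box, the alternative is to run the standard It\^o-formula argument directly: apply Theorem \ref{theorem_ito} to $f(x)=\etr(i\,\Theta^\rmt x)$ (or to $\tr(\Theta^\rmt x)$ and $\tr(\Theta^\rmt x)^2$) for a fixed deformation matrix $\Theta$, use the covariation hypothesis to evaluate the second-order term, and deduce that $\etr(i\Theta^\rmt(B_t-B_s))\exp(\tfrac12\tr(\Theta^\rmt\Theta)(t-s))$ is a martingale, hence that increments are Gaussian with the right covariance and independent of the past; this is more work but keeps everything matrix-native.
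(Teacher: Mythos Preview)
Your proposal is correct and follows essentially the same route as the paper: vectorize $B$ via $\mathrm{vec}$ to obtain an $\bbr^{p^2}$-valued continuous local martingale, verify that the index bijection $(i,j)\leftrightarrow a$ turns the hypothesis into $[M^{(a)},M^{(b)}]_t=\delta_{ab}\,t$, and then invoke the standard multivariate L\'evy characterization. The only differences are cosmetic --- the paper writes out the bijection explicitly as $a=p(j-1)+i$ and cites \citet[Theorem 40, Chapter 2]{protter} rather than Revuz--Yor --- and your suggested alternative via the It\^o formula is unnecessary here.
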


\begin{proof}
It suffices to show that $X:=\mathrm{vec}(B)$ is a $p^2$-dimensional Brownian motion.\\
Let $a,b\in\{1,\ldots,p^2\}$. Then, there exist $i,j,k,l\in\{1,\ldots,p\}$ such that $a=p(j-1)+i$ and $b=p(l-1)+k$. Clearly, $i=k$ and $j=l$ imply $a=b$, and conversely, if we assume $a=b$, then $i\neq k$ or resp. $j\neq l$ imply contradictions. Thus,
\[ [X_a,X_b]=[B_{ij},B_{kl}]=t \mathbf{1}_{\{i=k\}}\mathbf{1}_{\{j=l\}} = t \mathbf{1}_{\{a=b\}} \]
With \citet[Theorem 40,Chapter 2]{protter} we can conclude that $X$ is a $p^2$-dimensional Brownian motion.
\end{proof}

For every finite stopping time $\tau_0$ we say $M$ is a local martingale on the (stochastic) interval $[0,\tau_0]$ if the stopped process $(M_{\inf\{t,\tau_0\}})_{t\in\bbr_+}$ is a local martingale. The next Theorem shows us that every stopped continous local martingale is a stopped, time-changed Brownian motion. There also exists a version of Theorem \ref{theorem_dambis} without stopping times but with the additional assumption that $\lim_{t\rightarrow\infty}[M,M]_t=\infty$, see the Dambis-Dubins-Schwarz-Theorem in \citet[Theorem 1.6, Chapter V]{revuz}. 

\begin{Theorem}
Let $0<\tau_0<\infty$ be a stopping time and $M$ be a continuous local martingale on the interval $[0,\tau_0]$, which is not identically equal to zero. If we set 
\[ T_t := \inf \{s:\,[M,M]_s > t \} \textnormal{ (with the convention } \inf\{\emptyset\}=\infty) \]
then $B_t:=M_{T_t}$ is a stopped $\calf_{T_t}$-Brownian motion on the stochastic interval $[\,0,[M,M]_{\tau_0})$, $[M,M]_{\tau_0}>0$ a.s., i.e $B$ is a Brownian motion w.r.t. the $\sigma$-algebra 
\[ \calf_{T_t}=\{A\in\calf:\,A\cap\{T_t\leq t\}\in\calf_t \quad t\in\bbr_+\} \]
that is stopped at $[M,M]_{\tau_0}$. Furthermore, it holds that $M_t=B_{[M,M]_t}$, i.e. $M$ is a stopped, time-changed Brownian motion.
\label{theorem_dambis}\end{Theorem}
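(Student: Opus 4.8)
This is a version of the Dambis–Dubins–Schwarz theorem on a stopped interval, so I would follow the classical argument (as in Revuz–Yor, Chapter V) with the bookkeeping adapted to the stopping time $\tau_0$. The strategy is to time-change $M$ by the right-continuous inverse $T_t$ of the increasing process $[M,M]$, show the resulting process $B_t=M_{T_t}$ is a continuous local martingale with quadratic variation $[B,B]_t=t$ (on the appropriate stochastic interval), and then invoke L\'evy's Theorem (Theorem \ref{theorem_levy}, in its one-dimensional incarnation) to conclude $B$ is a Brownian motion. Finally, reversing the time-change recovers $M_t=B_{[M,M]_t}$.

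\medskip

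\emph{Step 1: Preliminaries on the clock.} First I would record that $A_t:=[M,M]_t$ is continuous, adapted and nondecreasing with $A_0=0$, and that $[M,M]_{\tau_0}>0$ a.s.\ — the latter because if $[M,M]_{\tau_0}=0$ on a set of positive probability, then $M$ would be constant $=M_0$ there up to time $\tau_0$, and a localization/optional-stopping argument forces $M_0=0$ on that set, contradicting that $M$ is not identically zero (one has to be slightly careful: ``not identically zero'' should be read as $M$ not indistinguishable from $0$ on $[0,\tau_0]$). Then $T_t=\inf\{s:A_s>t\}$ is the right-continuous inverse; it is a nondecreasing, right-continuous family of stopping times with respect to $(\calf_s)$, finite for $t<A_{\tau_0}$ (with the convention $\inf\emptyset=\infty$ accounting for $t\ge A_{\tau_0}$), and $\calf_{T_t}$ is a genuine filtration. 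A key technical point I would verify is that $M$ is constant on each interval of constancy of $A$, so that $M_{T_t}$ is well defined and continuous in $t$ even where $T$ jumps; this is the standard lemma that on $\{A_{s}=A_{s'}$ for $s\le u\le s'\}$ one has $M_u=M_s$, proved by stopping $M$ at the entry times and using $[M,M]$ as the quadratic variation.

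\medskip

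\emph{Step 2: $B$ is a continuous local martingale with $[B,B]_t=t\wedge A_{\tau_0}$.} I would apply the optional-stopping theorem to the stopped local martingales $M^{\tau_0}$ and $(M^{\tau_0})^2-A^{\tau_0}$ along the family $(T_t)$: after a suitable localization (stopping also at exit times of $[M,M]$ from bounded sets to get honest martingales), $B_t=M_{T_t\wedge\tau_0}$ is an $\calf_{T_t}$-local martingale and $B_t^2-(A_{\tau_0}\wedge t)$ is an $\calf_{T_t}$-local martingale, whence $[B,B]_t=A_{\tau_0}\wedge t$. Continuity of $B$ in $t$ comes from Step 1.

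\medskip

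\emph{Step 3: Identify $B$ as a (stopped) Brownian motion and invert.} On the event-by-event stochastic interval $[0,A_{\tau_0})$ the clock $t\mapsto[B,B]_t=t$ is exactly that of Brownian motion, so L\'evy's characterization (Theorem \ref{theorem_levy} with $p=1$, or the cited scalar version) yields that $B$, run until $A_{\tau_0}$, is a Brownian motion with respect to $(\calf_{T_t})$ stopped at $A_{\tau_0}=[M,M]_{\tau_0}$. For the last assertion, since $A$ is continuous one has $A_{T_t}=t\wedge A_{\tau_0}$ and $T_{A_s}\ge s$ with $M_{T_{A_s}}=M_s$ (again by the constancy lemma), so substituting $t=A_s$ in $B_t=M_{T_t}$ gives $B_{[M,M]_s}=M_s$ for $s\le\tau_0$, which is the claimed representation of $M$ as a time-changed Brownian motion.

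\medskip

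\emph{Main obstacle.} The delicate part is not L\'evy's theorem but the measure-theoretic care around the time-change on the \emph{stochastic} interval $[0,\tau_0]$: verifying that $(\calf_{T_t})$ satisfies the usual conditions, that the optional stopping in Step 2 is legitimate despite $M$ being only a local martingale (so one genuinely needs the double localization, by $\tau_0$ and by the reducing sequence), and that $M$ really is constant on the flat stretches of $[M,M]$ so that $B$ is well defined and continuous. Handling the convention $\inf\emptyset=\infty$ cleanly — i.e.\ that everything is only asserted for $t<[M,M]_{\tau_0}$ and that $[M,M]_{\tau_0}>0$ a.s.\ — is where most of the actual work lies.
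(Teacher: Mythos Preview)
Your proposal is correct and follows essentially the same route as the paper's proof: both argue $[M,M]_{\tau_0}>0$ by contradiction, identify $T_t$ as the right-inverse of $[M,M]$, compute $[B,B]_t=[M,M]_{T_t}=t$ and invoke L\'evy's characterization, and then use the constancy of $M$ on the flat stretches of $[M,M]$ to obtain $M_t=B_{[M,M]_t}$. If anything, you are more careful than the paper about the optional-stopping/localization step needed to verify that $B$ is genuinely an $(\calf_{T_t})$-local martingale and about the continuity of $B$; the paper's proof is terse on precisely those points you flag as the main obstacle.
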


\begin{proof}
Observe that $T_t$ is a generalized inverse function of $[M,M]_t$. Because $M$ is continuous, and hence $[M,M]$, too, it holds that $[M,M]_{T_t}=t$, but still we only have $T_{[M,M]_t}\geq t$ in general (and strict equality for every $t$ if and only if $M$ was strict monotonic on the entire interval). See \citet[p.7-8]{revuz} for details. \\
Suppose $[M,M]_{\tau_0}=0$. Because $[M,M]$ is an increasing process, $[M,M]_t=0$ for all $t\in[0,\tau_0]$, thus $E([M,M]_t)=0$ for all $t\in[0,\tau_0]$. With \citet[Chapter II, Corollary 3]{protter} it follows $E(M_t^2)=0$ for all $t\in[0,\tau_0]$, hence $M$ is equal to zero, which is a contradiction.\\
The set $\{T_t\}_{t\in\bbr_+}$ is a family of stopping times, because $M$ is adapted. Observe that
\[  [M,M]_{\tau_0}>t \Rightarrow T_t\leq\tau_0 \]
Hence $T_t<\infty$ as $\tau_0<\infty$. From \citet[Proposition 4.8 and 4.9, Chapter I]{revuz} we know that the stopped process $M_{T_t}$ is $\calf_{T_t}$-measurable. Furthermore,
\[ [B,B]_t=[M,M]_{T_t}=t \]
as mentioned above. With Theorem \ref{theorem_levy} we can can conclude that $B$ is a stopped \\$\calf_{T_t}$-Brownian motion on $[\,0,[M,M]_{\tau_0})$.\\
To prove that $M$ is a time-changed Brownian motion, observe that $T_{[M,M]_t}>t$ if and only if $[M,M]$ is constant on $[t,T_{[M,M]_t}]$. As $M$ and $[M,M]$ are constant on the same intervals, we have $M_{T_{[M,M]_t}}=M_t$ and thus $B_{[M,M]_t}=M_t$.
\end{proof}

With the foregoing Theorem we can form an argument that will allow us in turn to proof some result on the existence of the Wishart process in the next chapter.

\begin{Theorem}(McKean's Argument, cf. \citet[p.47,Problem 7]{mckean})\\
Let $r$ be a real-valued continous stochastic process such that $P(r_0>0)=1$ and $h:\bbr_+\rightarrow\bbr$ such that
\begin{itemize}
\item $h(r)$ is a continuous local martingale on the interval $[0,\tau_0)$ for $\tau_0:=\inf\{s:\,r_s=0\}$
\item $\lim_{x\to 0,x>0} h(x)=\infty$ or resp. $\lim_{x\to 0,x>0} h(x)=-\infty$
\end{itemize}
Then $\tau_0=\infty$ almost surely, i.e. $r_t(\omega)>0 \quad\fa t\in\bbr_+$ for almost all $\omega$.
\label{theorem_mckean}\end{Theorem}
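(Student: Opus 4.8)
The plan is to argue by contradiction and to exploit that a continuous local martingale, by the time-change representation of Theorem \ref{theorem_dambis}, behaves like a time-changed Brownian motion and so cannot run off to $+\infty$ (or $-\infty$) on a finite clock. Without loss of generality I would assume $\lim_{x\to 0,x>0}h(x)=+\infty$; the other case follows by replacing $h$ with $-h$, for which $h(r)$ is still a continuous local martingale. Suppose, for contradiction, that $P(\tau_0<\infty)>0$.

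First I would describe $r$ near $\tau_0$. On $\{\tau_0<\infty\}$, continuity of $r$ gives $r_{\tau_0}=0$, and, since $r_0>0$ a.s., the intermediate value theorem forces $r_t>0$ for all $t\in[0,\tau_0)$; hence $r_t\to 0$ from the right as $t\uparrow\tau_0$, and therefore $\lim_{t\uparrow\tau_0}h(r_t)=+\infty$ on $\{\tau_0<\infty\}$. Set $M_t:=h(r_t)-h(r_0)$ for $t\in[0,\tau_0)$; since $h(r_0)$ is a.s.\ finite and $\calf_0$-measurable, $M$ is a continuous local martingale on $[0,\tau_0)$ null at $0$, and it suffices to show that such an $M$ cannot satisfy $\lim_{t\uparrow\tau_0}M_t=+\infty$ on a set of positive probability.

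Next I would reduce to Theorem \ref{theorem_dambis} by an inner localisation, since that theorem is stated for a finite stopping time and a closed interval. If $M\equiv 0$ on $[0,\tau_0)$ then $h(r_t)\to h(r_0)$, already contradicting the previous paragraph, so I may assume $M\not\equiv 0$. Put $\rho_n:=\inf\{t:\,r_t\le 1/n\}\wedge n$; these are finite stopping times with $\rho_n\uparrow\tau_0$, and on $[0,\rho_n]$ one has $r\ge 1/n>0$, so $\rho_n<\tau_0$ and $M$ is a continuous local martingale on the closed interval $[0,\rho_n]$. With $T_t:=\inf\{s:\,[M,M]_s>t\}$ (which does not depend on $n$), applying Theorem \ref{theorem_dambis} on each $[0,\rho_n]$ yields, consistently in $n$, a Brownian motion $B_t:=M_{T_t}$ on $[0,s_\infty)$ with $s_\infty:=[M,M]_{\tau_0-}=\lim_{t\uparrow\tau_0}[M,M]_t=\sup_n[M,M]_{\rho_n}\in(0,\infty]$, stopped at $s_\infty$, such that $M_t=B_{[M,M]_t}$ for every $t\in[0,\tau_0)$.

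Finally I would read off the contradiction. As $t\uparrow\tau_0$ we have $[M,M]_t\uparrow s_\infty$, so $M_t=B_{[M,M]_t}$ tracks $B_s$ as $s\uparrow s_\infty$. If $s_\infty<\infty$, then $B$ is a Brownian motion on a bounded time interval and hence has an a.s.\ finite limit at $s_\infty$, so $M_t$ converges to a finite value; if $s_\infty=\infty$, the standard fact $\liminf_{s\to\infty}B_s=-\infty$ a.s., together with continuity of the increasing clock $[M,M]$, produces times $t_k\uparrow\tau_0$ with $M_{t_k}\to-\infty$. Either way $M_t\not\to+\infty$ as $t\uparrow\tau_0$ on $\{\tau_0<\infty\}$, contradicting $\lim_{t\uparrow\tau_0}M_t=+\infty$ there; hence $P(\tau_0<\infty)=0$, i.e.\ $r_t(\omega)>0$ for all $t$ for almost every $\omega$. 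I expect the main obstacle to be the careful passage from the finite-horizon, closed-interval hypothesis of Theorem \ref{theorem_dambis} to the half-open stochastic interval $[0,\tau_0)$ with possibly infinite $\tau_0$ — that is, making the inner localisation and the patched-together Brownian motion precise — whereas the concluding path argument is essentially immediate once the time-change representation is in hand.
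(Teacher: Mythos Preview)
Your proposal is correct and follows essentially the same approach as the paper: argue by contradiction, invoke the Dambis--Dubins--Schwarz time-change of Theorem \ref{theorem_dambis} to write $h(r)$ as a time-changed Brownian motion, and then split on whether the terminal quadratic variation is finite (Brownian motion cannot explode in finite time) or infinite (Brownian motion oscillates). Your treatment is in fact a bit more careful than the paper's---you work on the event $\{\tau_0<\infty\}$ rather than assuming $\tau_0<\infty$ outright, and you make the inner localisation via the $\rho_n$ explicit, whereas the paper applies Theorem \ref{theorem_dambis} directly on $[0,\tau_0]$ without spelling this out---but the skeleton of the argument is identical.
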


\begin{Remark}
If $h(r_t)=\int_0^t f(s,r_s)\,dW_s$ with a Brownian motion $W$, then for $h(r)$ being a continuous local martingale on $[0,\tau_0)$ it is sufficient that $f$ is square integrable on $[0,\tau_0)$.
\end{Remark}

\begin{proof}
Suppose $\tau_0<\infty$. \\
Define $M:=h(r)$ and $T_t$ as in Theorem \ref{theorem_dambis}. $M$ is a continuous local martingale on $[0,\tau_0]$ and with Theorem \ref{theorem_dambis} we can conclude that $B_t:=M_{T_t}$ is a stopped Brownian motion on $[\,0,[M,M]_{\tau_0})$. Observe, that $r_0>0$ and the continuity of $r$ impliy $\tau_0>0$.\\
Consider the case $\lim_{x\to 0,x>0} h(x)=\infty$. On the interval $[0,\tau_0)$, the process $h(r_t)$ takes every value in $[h(r_0),\infty)$, especially $[h(r),h(r)]_{\tau_0}=[M,M]_{\tau_0}>0$. For $\tau_0<\infty$ we have
\[ t\rightarrow [M,M]_{\tau_0} \Rightarrow T_t\rightarrow\tau_0 \Rightarrow r_{T_t}\rightarrow 0 \Rightarrow B_t=h(r_{T_t})\rightarrow \infty \]
% $\tau_0<\infty$ benötigt man, weil (t->\infty nicht folgt r_t->0)
There are two cases:
\begin{itemize}
\item $[M,M]_{\tau_0}<\infty$, what is impossible because a Brownian motion can not go to infinity in finite time, see \citet[Law of the iterated logarithm, Corollary 1.12, Chapter II]{revuz}.
\item $[M,M]_{\tau_0}=\infty$, which implies that $B$ is a Brownian motion on $\bbr_+$ that converges to $\infty$  almost surely and that is a contradiction because,
\[ P(B_t\leq 0 \textnormal{ infinitely often for } t\rightarrow\infty)=1, \]
i.e $B$ has infinite oscillations.
\end{itemize}
As both cases imply contradictions, we conclude $\tau_0=\infty$.\\
The case for $\lim_{x\to 0,x>0} h(x)=-\infty$ is	analogous.
\end{proof}

%%%%%%%%%%%%%%%%%%%%%%%%%%%%%%%%%%%%%%%%
\section{Ornstein-Uhlenbeck Processes}

We give the following definition according to \citet{bru} such that we are later able to show that some solutions of the Wishart SDE can be constructed out of a matrix variate Ornstein-Uhlenbeck process.

\begin{Definition}[Matrix Variate Ornstein-Uhlenbeck Process]\ \\
Let $A,B\in\calm_p(\bbr)$, $x_0\in\calm_{n,p}(\bbr)$ a.s. and $W\thicksim\calbm_{n,p}$. A solution $X$ of
\beqq
dX_t = X_tB\,dt + dW_t\,A,\quad X_0 = x_0
\label{eq_oup}\eeqq
is called $n\times p$-dimensional Ornstein-Uhlenbeck process. We write $X\thicksim\caloup_{n,p}(A,B,x_0)$.
\end{Definition}

As $X\mapsto XB$ and $X\mapsto A$ are trivially globally Lipschitz and satisfy the linear growth condition we know from Theorem \ref{theorem_exsde} that (\ref{eq_oup}) has a unique strong solution on the entire interval $[0,\infty)$. Luckily, we are even able to give an explicit formula for the solution of (\ref{eq_oup}).

\begin{Theorem}[Existence and Uniqueness of the Ornstein-Uhlenbeck Process]\ \\
For a Brownian motion $W\thicksim\calbm_{n,p}$, the unique strong solution of (\ref{eq_oup}) is given by
\beqq X_t = x_0e^{Bt}+\left(\int_0^t dW_s\,A e^{-Bs}\right)e^{Bt} \label{eq_solution_oup}\eeqq
\end{Theorem}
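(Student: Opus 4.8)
The plan is to verify directly that the process $X_t$ defined by (\ref{eq_solution_oup}) solves the stochastic differential equation (\ref{eq_oup}) with the correct initial value, and then to invoke the existence-uniqueness statement already recorded in the excerpt (the remark immediately preceding the theorem, based on Theorem \ref{theorem_exsde}) to conclude that this is \emph{the} unique strong solution. Uniqueness therefore requires no further work; the entire content of the proof is the It\^o computation.

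First I would check the initial condition: at $t=0$ the stochastic integral vanishes and $e^{B\cdot 0}=I_p$, so $X_0=x_0$, as required. Next I would write $X_t = Y_t\, e^{Bt}$ where $Y_t := x_0 + \int_0^t dW_s\, A e^{-Bs}$, so that $Y$ is a (matrix-valued) semimartingale with $dY_t = dW_t\, A e^{-Bt}$ and of finite variation ``complement'' zero, while $t\mapsto e^{Bt}$ is a deterministic $C^1$ (hence finite-variation, continuous) process with $\frac{d}{dt}e^{Bt} = e^{Bt}B = B e^{Bt}$. Applying the matrix-variate partial integration formula, Theorem \ref{partialintegration}, to the product $Y_t\, e^{Bt}$ gives
\[ X_t = X_0 + \int_0^t dY_s\, e^{Bs} + \int_0^t Y_s\, d(e^{Bs}) + [Y, e^{B\cdot}]_t^M. \]
Since $e^{B\cdot}$ has finite variation and is continuous, the matrix quadratic covariation term $[Y,e^{B\cdot}]^M$ vanishes. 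In the first integral, $dY_s\, e^{Bs} = dW_s\, A e^{-Bs} e^{Bs} = dW_s\, A$, which produces exactly the $dW_t\,A$ term of (\ref{eq_oup}). In the second integral, $Y_s\, d(e^{Bs}) = Y_s e^{Bs} B\, ds = X_s B\, ds$, which produces exactly the drift term $X_t B\, dt$. Collecting these, $dX_t = X_t B\, dt + dW_t\, A$, so $X$ of (\ref{eq_solution_oup}) is a strong solution.

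I do not anticipate a genuine obstacle here; the only points demanding care are bookkeeping ones. One is making sure the matrix-variate partial integration formula of Theorem \ref{partialintegration} is applied with the correct dimensions ($Y_t\in\calm_{n,p}(\bbr)$, $e^{Bt}\in\calm_p(\bbr)$, product in $\calm_{n,p}(\bbr)$) and that the mixed covariation term is legitimately dropped because $e^{B\cdot}$ is continuous of finite variation. A second is the elementary but essential identity $e^{-Bs}e^{Bs}=I_p$ together with $\frac{d}{dt}e^{Bt}=e^{Bt}B$, which are standard facts about the matrix exponential of a fixed matrix $B$ (note $B$ and $e^{Bt}$ commute). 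Finally, one should note that the stochastic integral $\int_0^t dW_s\, A e^{-Bs}$ is well-defined since the integrand $s\mapsto A e^{-Bs}$ is deterministic, continuous, hence locally bounded, so it is $W$-integrable on every finite interval; this also makes $X$ continuous and adapted to the (completed) filtration generated by $W$, confirming that the solution we have produced is indeed strong.
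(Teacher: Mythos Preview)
Your proof is correct and follows essentially the same route as the paper: differentiate the candidate $X_t$ via the matrix-variate partial integration formula, observe that the covariation with the deterministic factor $e^{B\cdot}$ vanishes, use $e^{-Bs}e^{Bs}=I_p$ and $\frac{d}{dt}e^{Bt}=e^{Bt}B$, and then appeal to the general existence/uniqueness result (Theorem \ref{theorem_exsde}) already noted before the theorem. The only cosmetic difference is that the paper differentiates the two summands $x_0e^{Bt}$ and $\bigl(\int_0^t dW_s\,Ae^{-Bs}\bigr)e^{Bt}$ separately rather than bundling them into a single factor $Y_t$.
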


\begin{proof}\beao
dX_t & = & d(x_0e^{Bt}) + d\left(\left(\int_0^t dW_s\,A e^{-Bs}\right)e^{Bt}\right) \\
& = & x_0e^{Bt}B\,dt + dW_t\,A e^{-Bt}e^{Bt} + \left(\int_0^t dW_s\,A e^{-Bs}\right)e^{Bt}B\,dt + \underbrace{d\left[\int_0^{\cdot} dW_sAe^{-Bs},e^{B\cdot}\right]_t^M}_{=0} \\
& = & \left(x_0e^{Bt}+\left(\int_0^t dW_s\,A e^{-Bs}\right)e^{Bt}\right)B\,dt + dW_t\,A \\
& = & X_tB\,dt + dW_t\,A
\eeao
Furthermore, (\ref{eq_solution_oup}) is a strong solution by construction.
\end{proof}

The following Lemma will be useful for determining the conditional distribution of the Ornstein-Uhlenbeck process.

\begin{Lemma}
Let $W\thicksim\calbm_{n,p}$ and $X:\bbr_+\rightarrow\calm_{p,m}(\bbr),\,t\mapsto X_t$ be a square integrable, deterministic function. Then
\[ \int_0^t dW_s\,X_s \thicksim \caln_{n,m}(0,I_n\otimes \int_0^t X_s^\rmt X_s\,ds) \]
\label{lemma_integral}\end{Lemma}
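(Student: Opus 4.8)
The plan is to show that the stochastic integral $Y_t:=\int_0^t dW_s\,X_s$, being a continuous local martingale built from a Brownian motion against a deterministic integrand, is in fact a Gaussian process with the stated mean and covariance, and then identify this with the claimed matrix variate normal law. First I would observe that $Y_t$ is a well-defined continuous local martingale (indeed a true martingale, since $X$ is deterministic and square integrable, so the integrand is in $L^2$), and that $Y_0=0$, which already gives mean zero: $E[Y_t]=0$, i.e. $E[\mathrm{vec}(Y_t^\rmt)]=0$.

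Next I would compute the covariance. Writing things out componentwise with the definition of the matrix variate stochastic integral given in the excerpt, $Y_{t,ij}=\sum_{k=1}^{p}\int_0^t X_{s,kj}\,dW_{s,ik}$, so for two entries $Y_{t,ij}$ and $Y_{t,i'j'}$ the covariance is
\[ E[Y_{t,ij}Y_{t,i'j'}] = \sum_{k,k'}\int_0^t X_{s,kj}X_{s,k'j'}\,d[W_{ik},W_{i'k'}]_s = \mathbf{1}_{\{i=i'\}}\sum_{k}\int_0^t X_{s,kj}X_{s,kj'}\,ds, \]
using that the $W_{ik}$ are independent so $d[W_{ik},W_{i'k'}]_s=\mathbf{1}_{\{i=i',k=k'\}}\,ds$. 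The inner sum is exactly $\left(\int_0^t X_s^\rmt X_s\,ds\right)_{jj'}$. Packaging this into the block structure of $\cov$ from the Covariance Matrix definition, this says $\cov(Y_t,Y_t)=I_n\otimes\int_0^t X_s^\rmt X_s\,ds$, which is the claimed covariance. (One should be careful about the $\mathrm{vec}$ versus $\mathrm{vec}(\cdot^\rmt)$ convention and about the ordering in the Kronecker product; matching indices against the definition of $\cov$ in the excerpt is the bookkeeping one has to do here.)

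Then I would argue Gaussianity: $\mathrm{vec}(Y_t^\rmt)$ is a vector-valued continuous martingale whose quadratic covariation matrix is the deterministic function $t\mapsto I_n\otimes\int_0^t X_s^\rmt X_s\,ds$; a continuous martingale with deterministic quadratic variation is Gaussian (e.g. by checking that its characteristic function solves the appropriate ODE via It\^o's formula, or by a time-change / L\'evy-type argument after reducing to the case of a single Brownian integral). Since a mean-zero Gaussian vector is determined by its covariance, this forces $\mathrm{vec}(Y_t^\rmt)\sim\caln_{nm}(0,I_n\otimes\int_0^t X_s^\rmt X_s\,ds)$, which by the definition of the matrix variate normal distribution is precisely $Y_t\sim\caln_{n,m}(0,I_n\otimes\int_0^t X_s^\rmt X_s\,ds)$.

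The main obstacle is not any deep probability but the index/convention bookkeeping: one must be meticulous about whether the covariance parameter should appear as $I_n\otimes(\cdot)$ or $(\cdot)\otimes I_n$, which depends on whether one vectorizes $Y$ or $Y^\rmt$, and then reconcile that with the exact form of the matrix variate normal distribution as defined earlier. A clean alternative that sidesteps the Gaussianity argument would be to approximate $X$ by simple (piecewise constant) deterministic functions, for which the integral is a finite linear combination of independent Gaussian increments $W_{s_{j+1}}-W_{s_j}$ and the result follows from Theorem~\ref{theorem_scalingnd}, and then pass to the limit using that $L^2$-limits of Gaussian vectors are Gaussian and that the covariances converge; I would likely present this approximation route since it keeps everything within results already established in the excerpt.
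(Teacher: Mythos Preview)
Your proposal is correct and essentially matches the paper's proof: the paper likewise establishes the mean via the martingale property, computes the covariance componentwise to obtain the block-diagonal structure $I_n\otimes\int_0^t X_s^\rmt X_s\,ds$, and obtains Gaussianity by the $L^2$-limit-of-Gaussians argument (your ``approximation route''), citing \O ksendal. Your alternative Gaussianity argument via deterministic quadratic variation is a minor variation, and since you indicate you would ultimately present the approximation route, the two proofs coincide.
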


\begin{proof}
$H_t:=\int_0^t dW_s\,X_s$. $H_t\in\calm_{n,m}(\bbr)$ is a $\mathcal{L}^2$-limit of normal distributed, independent, random variables and therefore normally distributed (Cf. \citet[Proof of Theorem 5.2.1 and Theorem A.7.]{oek}). Furthermore, $H_t$ is a martingale with a.s. initial value zero and thus expectation zero. \\
$\cov(H_t,H_t)$ is a block diagonal matrix, because different rows of $W$ are independent. Hence, $\cov(H_t,H_t)$ is a tensor product of $I_n$ with another $m$-dimensional matrix. Furthermore, all rows of $W$ are identically distributed such that we only need to consider row 1 of $H_t$, i.e. the first block matrix $D_t^1$ of $\cov(H_t,H_t)$. We have that \\
\[ H_{t,1i}= \sum_{k=1}^p \int_0^t dW_{s,1k}\,X_{s,ki} \]
and thus
\beao D_{t,ij}^1 & = & \cov(H_{t,1i},H_{t,1j}) \\
& = & \sum_{k_i,k_j=1}^p \cov(\int_0^t dW_{s,1k_i}\,X_{s,k_ii},\int_0^t dW_{s,1k_j}\,X_{s,k_jj}) \\
& = & \sum_{k_i,k_j=1}^p \int_0^t X_{s,k_ii} X_{s,k_jj}\,ds \mathbf{1}_{\{k_i=k_j\}} \\
& = & \sum_{k=1}^p \int_0^t X_{s,ki} X_{s,kj}\,ds = \int_0^t (X_s^\rmt X_s)_{ij}\,ds
\eeao
where we used the fact that $\cov(\int f_1(s)\,dW_{s,1k_i},\int f_2(s)\,dW_{s,1k_j})= \int f_1(s)f_2(s)\,ds \,\mathbf{1}_{\{k_i=k_j\}}$ for deterministic, square integrable real functions $f_1,f_2$.
\end{proof}

Now we are able to show that the (matrix variate) Ornstein-Uhlenbeck process is (matrix variate) normally distributed. Later we will see that some Wishart processes are therefore `square' normally distributed, that is by Lemma \ref{lemma_wd} a noncentral Wishart distribution.\\
To clarify terms, with \emph{distribution of a stochastic process} $X$ we always mean from now on the distribution of $X_t$ at time $t$ conditional on the initial value $X_0$. Although it may be confusing, the terms distribution of a stochastic process and law of a stochastic process have a different meaning in this thesis.

\begin{Theorem}[Distribution of the Matrix Variate Ornstein-Uhlenbeck Process]\ \\
Let $X\thicksim\caloup_{n,p}(A,B,x_0)$ with a matrix $B\in\calm_p(\bbr)$ that satisfies $0\notin-\sigma(B)-\sigma(B)$. Then the distribution of $X$ is given by
\[ X_t | x_0 \thicksim\caln_{n,p}(x_0e^{Bt},I_n \otimes [\cala^{-1}(A^\rmt A)-\cala^{-1}(e^{B^\rmt t}A^\rmt Ae^{Bt})]) \]
where $\cala^{-1}$ is the inverse of the linear operator $\cala:\cals_p\rightarrow\cals_p,X\mapsto-B^\rmt X-XB$.
\label{theorem_doup}\end{Theorem}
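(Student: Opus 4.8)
The plan is to read the conditional law of $X_t$ given $x_0$ directly off the explicit solution formula (\ref{eq_solution_oup}),
\[ X_t = x_0 e^{Bt} + \left(\int_0^t dW_s\, A e^{-Bs}\right) e^{Bt}. \]
Conditionally on the $\calf_0$-measurable initial value $x_0$ the driving Brownian motion $W$ is still a Brownian motion independent of $x_0$, so the first summand $x_0 e^{Bt}$ is a constant matrix that only shifts the mean, while the whole randomness is carried by the stochastic integral.

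First I would apply Lemma \ref{lemma_integral} to the deterministic, square-integrable integrand $s \mapsto A e^{-Bs} \in \calm_{p,p}(\bbr)$, obtaining
\[ \int_0^t dW_s\, A e^{-Bs} \thicksim \caln_{n,p}\!\left(0,\ I_n \otimes \int_0^t e^{-B^\rmt s} A^\rmt A e^{-Bs}\, ds\right). \]
Right-multiplying by the constant matrix $e^{Bt}$ and adding the constant matrix $x_0 e^{Bt}$, Theorem \ref{theorem_scalingnd} (taking its left factor to be $I_n$ and its right factor to be $e^{Bt}$) yields
\[ X_t \mid x_0 \thicksim \caln_{n,p}\!\left(x_0 e^{Bt},\ I_n \otimes e^{B^\rmt t}\left(\int_0^t e^{-B^\rmt s} A^\rmt A e^{-Bs}\, ds\right) e^{Bt}\right). \]

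It then remains to identify the covariance block with $\cala^{-1}(A^\rmt A) - \cala^{-1}(e^{B^\rmt t} A^\rmt A e^{Bt})$. Substituting $u = t - s$ and using that $e^{B^\rmt t}$ commutes with $e^{-B^\rmt s}$ turns the block into $V(t) := \int_0^t e^{B^\rmt u} A^\rmt A e^{Bu}\, du \in \cals_p$. Applying the operator $\cala\colon X \mapsto -B^\rmt X - XB$ and using $B^\rmt e^{B^\rmt u} = \frac{d}{du}e^{B^\rmt u}$ and $e^{Bu}B = \frac{d}{du}e^{Bu}$, the integrand of $\cala(V(t))$ collapses to the total derivative $-\frac{d}{du}\!\left(e^{B^\rmt u} A^\rmt A e^{Bu}\right)$, so the fundamental theorem of calculus gives $\cala(V(t)) = A^\rmt A - e^{B^\rmt t} A^\rmt A e^{Bt}$. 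Finally, since $\cala(X) = (-B^\rmt)X + X(-B^\rmt)^\rmt$ and $\sigma(-B^\rmt) = -\sigma(B)$, the hypothesis $0 \notin -\sigma(B) - \sigma(B)$ lets Lemma \ref{lemma_linop} apply, so $\cala$ is invertible on $\cals_p$; by linearity $V(t) = \cala^{-1}(A^\rmt A) - \cala^{-1}(e^{B^\rmt t} A^\rmt A e^{Bt})$, which is exactly the claimed covariance.

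The main obstacle is this last identification of the time integral of conjugated exponentials with the inverse Lyapunov operator $\cala^{-1}$; everything else is an assembly of the solution formula (\ref{eq_solution_oup}), Lemma \ref{lemma_integral}, and the affine-transformation rule of Theorem \ref{theorem_scalingnd}. A minor point of care is to record that conditioning on a possibly random $x_0$ leaves $W$ a Brownian motion, so that the three displayed steps are valid conditionally on $x_0$.
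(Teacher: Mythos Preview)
Your proposal is correct and follows essentially the same route as the paper: apply Lemma \ref{lemma_integral} to the stochastic integral in the explicit solution (\ref{eq_solution_oup}), use Theorem \ref{theorem_scalingnd} for the affine shift, and then identify the covariance block via the Lyapunov operator $\cala$. The only cosmetic difference is that the paper recognizes $\frac{d}{ds}(e^{-B^\rmt s}A^\rmt A e^{-Bs})=\cala(e^{-B^\rmt s}A^\rmt A e^{-Bs})$ and integrates before conjugating by $e^{Bt}$, whereas you first substitute $u=t-s$ to absorb the conjugation and then apply $\cala$; the two computations are equivalent.
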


\begin{proof}
Define $H_t = \int_0^t dW_sAe^{-Bs}$. Lemma \ref{lemma_integral} shows us that
\[ H_t\thicksim\caln_{n,p}(0,I_n\otimes \int_0^t e^{-B^\rmt s}A^\rmt Ae^{-Bs}\,ds) \]
Define the operator $\cala:X\mapsto -B^\rmt X-XB$ as in Lemma \ref{lemma_linop}. Then we have
\[ \frac{d}{ds}(e^{-B^\rmt s}A^\rmt Ae^{-Bs})=\cala(e^{-B^\rmt s}A^\rmt Ae^{-Bs}) \]
and hence
\beqq \int_0^t e^{-B^\rmt s}A^\rmt Ae^{-Bs}\,ds=\cala^{-1}(e^{-B^\rmt s}A^\rmt Ae^{-Bs})|_{s=0}^t \label{ainvint}\eeqq
The equality $X_t = x_0e^{Bt} + H_te^{Bt}$ and Theorem \ref{theorem_scalingnd} gives us
\[ X_t | x_0 \thicksim\caln_{n,p}(x_0e^{Bt},I_n \otimes e^{B^\rmt t} \cala^{-1}(e^{-B^\rmt s}A^\rmt Ae^{-Bs})|_{s=0}^t e^{Bt}) \]
Because $\cala$ is a linear operator, $\cala^{-1}$ is also linear and we can simplify
\[ \cala^{-1}(e^{-B^\rmt s}A^\rmt Ae^{-Bs})|_{s=0}^t=\cala^{-1}(e^{-B^\rmt t}A^\rmt Ae^{-Bt})-\cala^{-1}(A^\rmt A) \]
and, because $\cala^{-1}$ is the integral from (\ref{ainvint}),
\[
e^{B^\rmt t} (\cala^{-1}(e^{-B^\rmt t}A^\rmt Ae^{-Bt})-\cala^{-1}(A^\rmt A)) e^{Bt}=
\cala^{-1}(A^\rmt A)-\cala^{-1}(e^{B^\rmt t}A^\rmt Ae^{Bt}) \]
\end{proof}

\begin{Theorem}[Stationary Distribution of the Ornstein-Uhlenbeck Process]\ \\
Let $X\thicksim\caloup_{n,p}(A,B,x_0)$ with a matrix $B\in\calm_p(\bbr)$ such that all eigenvalues of $B$ have a negative real part, i.e. $\mathrm{Re}(\sigma(B))\subseteq(-\infty,0)$.\\
Then the Ornstein-Uhlenbeck process $X$ has a stationary limiting distribution, that is
\[ \caln_{n,p}\left(0,I_n \otimes \cala^{-1}(A^\rmt A) \right) \]
\label{statd_oup}\end{Theorem}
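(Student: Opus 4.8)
The plan is to take the time limit $t\to\infty$ in the explicit conditional distribution from Theorem~\ref{theorem_doup}. First I would observe that the hypothesis $\mathrm{Re}(\sigma(B))\subseteq(-\infty,0)$ implies in particular that $0\notin-\sigma(B)-\sigma(B)$ (no eigenvalue of $B$ can be purely imaginary, let alone have two eigenvalues summing to zero with the required sign pattern), so the operator $\cala:\cals_p\to\cals_p,\,X\mapsto -B^\rmt X-XB$ is invertible and Theorem~\ref{theorem_doup} applies. Hence for every $t\ge 0$,
\[
X_t\mid x_0\thicksim\caln_{n,p}\!\left(x_0e^{Bt},\,I_n\otimes\left[\cala^{-1}(A^\rmt A)-\cala^{-1}(e^{B^\rmt t}A^\rmt Ae^{Bt})\right]\right).
\]

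Next I would show that each of the two $t$-dependent pieces converges as $t\to\infty$. Since $\mathrm{Re}(\sigma(B))\subseteq(-\infty,0)$, the standard estimate on the matrix exponential gives $\|e^{Bt}\|\le C e^{-\delta t}$ for some constants $C,\delta>0$ (one may use a Jordan form of $B$, or the spectral radius of $e^{Bt}$ together with equivalence of norms). Therefore $x_0 e^{Bt}\to 0$, and likewise $e^{B^\rmt t}A^\rmt A e^{Bt}\to 0$ because $\|e^{B^\rmt t}A^\rmt A e^{Bt}\|\le \|e^{Bt}\|^2\|A^\rmt A\|\to 0$. By continuity and linearity of the bounded linear operator $\cala^{-1}$ on the finite-dimensional space $\cals_p$, we get $\cala^{-1}(e^{B^\rmt t}A^\rmt A e^{Bt})\to \cala^{-1}(0)=0$. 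Consequently the covariance parameter converges to $I_n\otimes\cala^{-1}(A^\rmt A)$.

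To finish, I would pass from convergence of parameters to weak convergence of the laws. The cleanest route is via characteristic functions and L\'evy's Continuity Theorem: by Theorem~\ref{theorem_cfnd} the characteristic function of $X_t\mid x_0$ is
\[
E[\etr(iZ^\rmt X_t)]=\etr\!\left(iZ^\rmt x_0 e^{Bt}-\tfrac12 Z^\rmt\big(\cala^{-1}(A^\rmt A)-\cala^{-1}(e^{B^\rmt t}A^\rmt A e^{Bt})\big)Z\right),
\]
and since $\etr$ is continuous and the arguments converge (by the previous paragraph), this converges pointwise in $Z$ to $\etr(-\tfrac12 Z^\rmt\cala^{-1}(A^\rmt A)Z)$, which is the characteristic function of $\caln_{n,p}(0,I_n\otimes\cala^{-1}(A^\rmt A))$ (note $\cala^{-1}(A^\rmt A)\in\ov{\cals_p^+}$ as a limit of the positive semidefinite covariance matrices, so this is a legitimate generalized matrix variate normal law). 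L\'evy's Continuity Theorem then yields $X_t\limw\caln_{n,p}(0,I_n\otimes\cala^{-1}(A^\rmt A))$, independently of $x_0$.

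The main obstacle is mostly a matter of care rather than depth: making precise that $\mathrm{Re}(\sigma(B))\subseteq(-\infty,0)$ forces $\|e^{Bt}\|\to 0$ (the Jordan-block polynomial factors are dominated by the exponential decay), and confirming that $\cala^{-1}(A^\rmt A)$ is indeed positive semidefinite so that the limiting object is a bona fide (possibly degenerate) distribution. One should also note explicitly that the limit does not depend on $x_0$, which is exactly what ``stationary limiting distribution'' should mean; a brief remark that this limiting law is in fact invariant for the Ornstein-Uhlenbeck dynamics (one can check $\cala^{-1}(A^\rmt A)=e^{B^\rmt t}\cala^{-1}(A^\rmt A)e^{Bt}+\cala^{-1}(A^\rmt A)-\cala^{-1}(e^{B^\rmt t}A^\rmt A e^{Bt})$ using the integral representation~\eqref{ainvint}) would justify the word ``stationary,'' though it is not strictly needed for the convergence claim.
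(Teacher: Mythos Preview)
Your proposal is correct and follows essentially the same route as the paper: apply Theorem~\ref{theorem_doup}, use the Jordan form of $B$ to get $e^{Bt}\to 0$, invoke continuity of $\cala^{-1}$ on the finite-dimensional space $\cals_p$, and conclude via characteristic functions and L\'evy's Continuity Theorem. The only minor difference is in the last step: the paper justifies the word ``stationary'' by appealing to the Markov property of $X$ (Theorem~\ref{markovsolution}) together with independence of the limit from $x_0$, whereas you propose checking invariance directly from the integral representation~\eqref{ainvint}; both work.
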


\begin{proof}
Observe that $\mathrm{Re}(\sigma(B))\subseteq(-\infty,0)$ implies $0\notin-\sigma(B)-\sigma(B)$ and we can use the foregoing Theorem. With Theorem \ref{theorem_cfnd} we get for $t\in(0,\infty)$ that $X_t$ given $x_0$ has characteristic function
\beqq \wh{P^{X_t}}(Z)=\etr\left(iZ^\rmt x_0 e^{Bt}-\frac{1}{2}Z^\rmt Z \Psi_t\right) \eeqq
with
\beqq \Psi_t=\cala^{-1}(A^\rmt A)-\cala^{-1}(e^{B^\rmt t}A^\rmt Ae^{Bt}) \label{psi}\eeqq

Now, we write the matrix $B$ in its \emph{Jordan normal form}:\\
There exists a matrix $T\in GL(p)$ such that
\[ B=T \begin{pmatrix} J_1 & 0 & 0 \\ 0 & \ddots & 0 \\ 0 & 0 & J_m \end{pmatrix} T^{-1} \]
with block matrices, called Jordan blocks, $J_i\in\calm_{l_i}(\bbr)$ such that $l_1+\ldots+l_m=p$.
With the calculation rules for the matrix exponential we get
\beqq \exp(Bt)=T\exp\left(\begin{pmatrix} J_1 t & 0 & 0 \\ 0 & \ddots & 0 \\ 0 & 0 & J_m t\end{pmatrix}\right) T^{-1}=T \diag(e^{J_1t},\ldots,e^{J_mt}) T^{-1} \label{jnf}\eeqq

Without loss of generality, consider only the first Jordan block 
\[ J:=J_1= \begin{pmatrix}
\lambda & 1 & 0 & \ldots & 0 \\ 0 & \lambda & 1& \ddots & \vdots \\ \vdots & \ddots & \ddots & \ddots & 0 \\
\vdots &  & \ddots & \ddots & 1 \\ 0 & \ldots & \ldots & 0 & \lambda 
\end{pmatrix}\in\calm_l(\bbr) \]
with $l:=l_1$, $\lambda=a+bi$ an eigenvalue of $B$ with $a<0$.\\
We can separate $J$ into a sum of two matrices, $J=D+N$, with a diagonal matrix $D=\diag(\lambda,\ldots,\lambda)\in\calm_l(\bbr)$ and a matrix
\[ N=\begin{pmatrix}
0& 1 & 0 & \ldots & 0 \\ 0 & 0 & 1& \ddots & \vdots \\ \vdots & \ddots & \ddots & \ddots & 0 \\
\vdots &  & \ddots & \ddots & 1 \\ 0 & \ldots & \ldots & 0 & 0 
\end{pmatrix}\in\calm_l(\bbr) \]
Because $D$ and $N$ commute, $DN=ND$, we have
\[ \exp(Jt)=\exp(Dt+Nt)=\exp(Dt)\exp(Nt) \]
and thus
\[ \exp(Jt)=\diag(e^{\lambda t},\ldots,e^{\lambda t})\exp(Nt)=e^{\lambda t}\exp(Nt)=e^{ibt}e^{at}\exp(Nt) \]
The matrix $N$ is nilpotent, as $N^l=0$. Hence, $\exp(Nt)$ is the finite sum
\[ \exp(Nt)=\sum_{k=0}^{l-1} \frac{1}{k!}N^k=\begin{pmatrix}
1 & t & \ldots & \ldots & \frac{t^{l-1}}{(l-1)!} \\ 0 & 1 & t& \ddots & \vdots \\
\vdots & \ddots & \ddots & \ddots & \vdots \\ \vdots &  & \ddots & \ddots & t \\ 0 & \ldots & \ldots & 0 & 1
\end{pmatrix}\in\calm_l(\bbr) \]
For $a<0$ and $t\rightarrow\infty$ we know that $e^{at}p_t$ converges to zero for every polynomial $p_t$ in $t$. Hence, $\lim_{t\rightarrow\infty}e^{at}\exp(Nt)=0$. As $e^{ibt}$ always has absolute value one, we also have that $\lim_{t\rightarrow\infty}\exp(Jt)=0$. With (\ref{jnf}) it is now obvious that
\[ \lim_{t\rightarrow\infty}\exp(Bt)=0 \]

The next step is to observe that $\cals_p$ is a finite dimensional and normed space with the norm induced by the inner product trace. Hence, the linear operator $\cala^{-1}:\cals_p\rightarrow\cals_p$ is continuous and with the above can conclude that
$\Psi_t$ from (\ref{psi}) converges pointwise to
\[ \lim_{\tto}\Psi_t=\cala^{-1}(A^\rmt A) \]
and thus
\beqq  
\lim_{\tto}\wh{P^{X_t}}(Z)=\etr\left(-\frac{1}{2}Z^\rmt Z\, \cala^{-1}(A^\rmt A)\right):=f(Z)
\eeqq
Clearly, f is continuous at $Z=0$.
With Lévy's Continuity Theorem we know that there exists a probability measure $\mu$ on $\calm_p(\bbr)$ such that $f(Z)=\wh{\mu}(Z)$, and $P^{X_t}\limw\mu$.\\
The function $f$ is the characteristic function of a normal distributed random matrix. That implies by Theorem \ref{theorem_cfnd} that $X_t$ converges in distribution to a random matrix with normal distribution $\mu$,

\beqq \mu=\caln_{n,p}(0,I_n \otimes \cala^{-1}(A^\rmt A) ) \label{ou_limitd}\eeqq

\noindent and the limit is independent of the initial value $x_0$. Thus, and by the fact that $X$ is a Markov process according to Theorem \ref{markovsolution}, the limit distribution in (\ref{ou_limitd}) is a stationary distribution.
\end{proof}

\begin{Corollary}
Let $X\thicksim\caloup_{n,p}(A,B,x_0)$ with a matrix $B\in\calm_p(\bbr)$ such that $B\in\cals_p^-$ and $A^\rmt A$ and $B$ commute. Then the stationary distribution of $X$ is given by
\[ \caln_{n,p}\left(0,I_n \otimes - \frac{1}{2} A^\rmt A B^{-1} \right) \]
\end{Corollary}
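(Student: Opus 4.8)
The plan is to deduce this directly from Theorem \ref{statd_oup}. First I would check that its hypotheses are met: since $B\in\cals_p^-$, applying Theorem \ref{spd} to $-B\in\cals_p^+$ shows that $B$ is orthogonally diagonalizable with strictly negative eigenvalues, so $\sigma(B)\subset(-\infty,0)$ and in particular $\mathrm{Re}(\sigma(B))\subseteq(-\infty,0)$. Hence Theorem \ref{statd_oup} applies and the stationary distribution of $X$ is $\caln_{n,p}(0,I_n\otimes\cala^{-1}(A^\rmt A))$, where $\cala:\cals_p\to\cals_p$, $X\mapsto -B^\rmt X-XB=-BX-XB$, the last equality because $B$ is symmetric. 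It therefore only remains to identify $\cala^{-1}(A^\rmt A)$.

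The claim is that $\cala^{-1}(A^\rmt A)=-\tfrac12 A^\rmt A B^{-1}$, and since $\cala$ is a bijection on $\cals_p$ it suffices to verify two things: that $M:=-\tfrac12 A^\rmt A B^{-1}$ actually lies in $\cals_p$, and that $\cala(M)=A^\rmt A$. For the first point, note that $B\in\cals_p^-$ is invertible, that $B^{-1}$ is again symmetric, and that $A^\rmt A$ (which lies in $\ov{\cals_p^+}$ by Theorem \ref{spd}) commutes with $B$, hence with $B^{-1}$; since the product of two commuting symmetric matrices is symmetric, $(A^\rmt A B^{-1})^\rmt=(B^{-1})^\rmt(A^\rmt A)^\rmt=B^{-1}A^\rmt A=A^\rmt A B^{-1}$, so indeed $M\in\cals_p$.

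For the second point I would simply compute, using $B^\rmt=B$ and the commutativity $BA^\rmt A=A^\rmt A B$:
\[
\cala(M)=-BM-MB=\tfrac12 BA^\rmt AB^{-1}+\tfrac12 A^\rmt AB^{-1}B=\tfrac12 A^\rmt A+\tfrac12 A^\rmt A=A^\rmt A,
\]
where in the first term $BA^\rmt AB^{-1}=A^\rmt A BB^{-1}=A^\rmt A$. Thus $\cala^{-1}(A^\rmt A)=M=-\tfrac12 A^\rmt AB^{-1}$, and substituting into the conclusion of Theorem \ref{statd_oup} gives the asserted stationary distribution $\caln_{n,p}(0,I_n\otimes-\tfrac12 A^\rmt A B^{-1})$.

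Since the computation is short, there is no real obstacle here; the only point that requires a moment's care is checking that $-\tfrac12 A^\rmt A B^{-1}$ is genuinely symmetric, so that it is a legitimate candidate for the value of $\cala^{-1}$ on $\cals_p$ — and this is exactly where the commutativity assumption is used (once for the symmetry, once to collapse $BA^\rmt AB^{-1}$). Everything else is just the definition of $\cala$ together with the invertibility of $\cala$ recorded in Theorem \ref{statd_oup}.
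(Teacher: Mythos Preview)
Your proof is correct and follows essentially the same approach as the paper: both verify directly that $\cala(-\tfrac12 A^\rmt A B^{-1})=A^\rmt A$ using $B^\rmt=B$ and the commutativity of $A^\rmt A$ and $B$. Your version is actually more careful, since you explicitly check that the hypotheses of Theorem \ref{statd_oup} are satisfied and that $-\tfrac12 A^\rmt A B^{-1}\in\cals_p$, points the paper leaves implicit.
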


\begin{proof}
\[ \cala^{-1}(A^\rmt A)=- \frac{1}{2} A^\rmt A B^{-1} \Leftrightarrow A^\rmt A = \cala(- \frac{1}{2} A^\rmt A B^{-1})=\frac{1}{2}(B A^\rmt A B^{-1} + A^\rmt A)= A^\rmt A\]
\end{proof}

\chapter{Wishart Processes}

In this chapter the main work on the theory of Wishart processes will be done. As an introduction we begin with the one dimensional case, called the square Bessel process. Afterwards, we focus on the general Wishart process and give a theorem about the existence and uniqueness of Wishart processes at the end of section 4.2. In section 4.3, we show that some Wishart processes can be expressed as matrix squares of the matrix variate Ornstein-Uhlenbeck process from section 3.5. Eventually, in section 4.4 we give an algorithm to simulate Wishart processes by using an Euler approximation.

%%%%%%%%%%%%%%%%%%%%%%%%%%%%%%%%%%%%%%%%
\section{The one dimensional case:\\ Square Bessel Processes}

The square Bessel process is the one-dimensional case of the Wishart process - in fact, even a special case of the one-dimensional Wishart process. We consider it separately, because it is much more elementary than for higher dimensions.

\begin{Definition}[Square Bessel Process]
Let $\al\geq 0$, $\beta$ be a one-dimensional Brownian motion and $x_0\geq 0$. A strong solution of
\beam dX_t=2\sqrt{X_t}\,d\beta_t+\al\,dt, \quad X_0=x_0 \label{eq_besq}\eeam
is called a square Bessel process with parameter $\al$ and denoted by $X\thicksim\rmbesq(\al,x_0)$.
\end{Definition}

\begin{Remark} Let $B$ denote a Brownian motion on $\bbr^n$ (that is an $\bbr^n$-vector of n \\ independent one-dimensional Brownian motions). The Bessel process is then the Euclidean norm of $B$, that is $\sqrt{B^\rmt B}$. Hence, the process $X$ with $X_t=B_t^\rmt B_t$ is often called square Bessel process in the literature. Using Theorem \ref{lemma_trbm} and Theorem \ref{partialintegration} one can show that $X\thicksim\rmbesq(n,x_0)$. Hence, using (\ref{eq_besq}) for the definition  of the square Bessel process is more general as it allows $n=\al$ to be any real, non-negative number.
\end{Remark}

Before we give a comprehensive theorem about existence, uniqueness and non-negativity of the square Bessel process, we state the following results for one-dimensional stochastic differential equations:

\begin{Theorem}(Pathwise Uniqueness, cf. \citet[Theorem 1]{yamada1})
Let 
\beam dX_t=\sigma(X_t)\,d\beta_t + b(X_t)\,dt \label{eq_1dsde}\eeam
be a one-dimensional stochastic differential equation where $b,\sigma:\bbr\rightarrow\bbr$ are continuous functions and $\beta$ a one-dimensional Brownian motion. Suppose there exist increasing functions $\rho,\kappa:(0,\infty)\rightarrow (0,\infty)$ such that
\beam
|\sigma(\xi)-\sigma(\eta)|\leq\rho(|\xi-\eta|) \quad\fa \xi,\eta\in\bbr \nonumber \\
\textnormal{ with } \int_0^1 \rho^{-2}(u)\,du=\infty
\label{eq_rho}\eeam 
and
\beam
|b(\xi)-b(\eta)|\leq\kappa(|\xi-\eta|) \quad\fa \xi,\eta\in\bbr \nonumber \\
\textnormal{ with } \int_0^1 \kappa^{-1}(u)\,du=\infty
\eeam 
Then pathwise uniqueness holds for (\ref{eq_1dsde}).
\label{theorem_pu1}\end{Theorem}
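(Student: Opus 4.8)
The plan is to run the classical Yamada--Watanabe smoothing argument. First I would reduce the claim to the estimate $E\,|X_t-X_t'|=0$ for every fixed $t\geq 0$: combined with path continuity and the hypothesis that the two solutions share the initial value and the driving Brownian motion $\beta$, this gives $X_t=X_t'$ simultaneously for all rational $t$, hence for all $t$, which is exactly indistinguishability. To keep all expectations finite I would work on the stochastic intervals $[0,t\wedge\tau_N]$ with $\tau_N:=\inf\{s\ge 0:\,|X_s|\vee|X_s'|\ge N\}$, on which $\sigma$ and $b$ are seen only through their restrictions to the compact set $[-N,N]$ (hence bounded, being continuous), and then let $N\to\infty$ using $\tau_N\uparrow\infty$ a.s.

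The technical heart is the construction of the approximants. Since $\int_0^1\rho^{-2}(u)\,du=\infty$, I would pick $1=a_0>a_1>a_2>\cdots\downarrow 0$ with $\int_{a_n}^{a_{n-1}}\rho^{-2}(u)\,du=n$, choose continuous $\psi_n$ supported in $(a_n,a_{n-1})$ with $0\le\psi_n(u)\le \frac{2}{n\,\rho^2(u)}$ and $\int\psi_n=1$, and set
\[ \phi_n(x):=\int_0^{|x|}\!\int_0^y\psi_n(u)\,du\,dy . \]
Then $\phi_n\in C^2(\bbr)$ is even, $0\le\phi_n(x)\uparrow|x|$, $|\phi_n'|\le 1$ and $\phi_n''(x)=\psi_n(|x|)$. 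Writing $\Delta:=X-X'$, I would apply the one-dimensional version of the It\^o formula (Theorem~\ref{theorem_ito}) to $\phi_n(\Delta_t)$, using $d\Delta_t=(\sigma(X_t)-\sigma(X_t'))\,d\beta_t+(b(X_t)-b(X_t'))\,dt$ and $d[\Delta,\Delta]_t=(\sigma(X_t)-\sigma(X_t'))^2\,dt$. On $[0,t\wedge\tau_N]$ the $d\beta$ integral is a true martingale, so after taking expectations
\[ E\,\phi_n(\Delta_{t\wedge\tau_N})\;\le\;E\!\int_0^{t\wedge\tau_N}\!\!|b(X_s)-b(X_s')|\,ds\;+\;\frac12\,E\!\int_0^{t\wedge\tau_N}\!\!\psi_n(|\Delta_s|)\,(\sigma(X_s)-\sigma(X_s'))^2\,ds . \]
The last term is at most $\frac12 E\int_0^{t}\psi_n(|\Delta_s|)\rho^2(|\Delta_s|)\,ds\le \frac{t}{n}\to 0$, so monotone convergence in $n$ on the left side gives
\[ E\,|\Delta_{t\wedge\tau_N}|\;\le\; E\!\int_0^{t}\kappa(|\Delta_{s\wedge\tau_N}|)\,ds . \]

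Finally I would close the loop. Setting $g(t):=E\,|\Delta_{t\wedge\tau_N}|$ one wants $g(t)\le\int_0^t\kappa(g(s))\,ds$, and then the Osgood--Bihari condition $\int_0^1\kappa^{-1}(u)\,du=\infty$ forces $g\equiv 0$ (it is precisely the divergence condition guaranteeing that $G'=\kappa(G),\ G(0)=0$ has only the trivial solution). Sending $N\to\infty$ then yields $E\,|X_t-X_t'|=0$ and hence pathwise uniqueness. The hard part will be the drift term: when $\kappa$ is merely increasing one cannot invoke ordinary Gr\"onwall, and to both pull the expectation inside $\kappa$ (by Jensen) and to apply Bihari's inequality one really needs $\kappa$ concave --- or at least dominated by a concave function with the same divergence property; reconciling this with the localisation bookkeeping so that every quantity above is finite is the delicate point, and it is exactly what is carried out in \citet{yamada1}, which the theorem cites.
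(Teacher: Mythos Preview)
Your sketch is the classical Yamada--Watanabe smoothing argument and is essentially correct; note that the paper itself gives no proof for this theorem but merely cites \citet[Theorem~1]{yamada1}, so there is nothing to compare against beyond the original reference, which proceeds exactly as you outline. Your flag on the drift step is well placed: the statement in the paper only asks $\kappa$ to be increasing, whereas the passage from $E\int_0^t\kappa(|\Delta_s|)\,ds$ to $\int_0^t\kappa(E|\Delta_s|)\,ds$ (needed for Bihari) uses concavity of $\kappa$, which is part of the hypotheses in \citet{yamada1}; this is a slight imprecision in the paper's formulation rather than a gap in your argument.
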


\begin{Theorem}(Comparison Theorem, \citet[Chapter IX, Theorem 3.7]{revuz})
Consider two stochastic differential equations
\beao dX^1_t=\sigma(X_t^1)\,d\beta_t + b^1(X_t^1)\,dt,\quad dX^2_t=\sigma(X_t^2)\,d\beta_t + b^2(X_t^2)\,dt \eeao
that both fulfill pathwise uniqueness and let $b^1$,$b^2$ be two bounded Borel functions such that $b^1\geq b^2$ everywhere and one of them is globally Lipschitz. If $(X^1,\beta)$ is a solution of the first, and $(X^2,\beta)$ a solution of the second stochastic differential equation ($X^1$,$X^2$ defined on the same probability space), w.r.t the same Brownian motion $\beta$  and if $X^1_0\geq X^2_0$ a.s., then
\beqq P[X^1_t\geq X^2_t \textit{ for all } t\in\bbr_+]=1 \eeqq
\end{Theorem}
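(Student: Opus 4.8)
The plan is to run the classical Tanaka--Gronwall argument. Set $\Delta_t := X^1_t - X^2_t$, so $\Delta_0 \ge 0$ a.s., and subtracting the two equations (which share the same $\beta$ and the same diffusion coefficient $\sigma$) gives
\[ \Delta_t = \Delta_0 + \int_0^t \bigl(\sigma(X^1_s)-\sigma(X^2_s)\bigr)\,d\beta_s + \int_0^t \bigl(b^1(X^1_s)-b^2(X^2_s)\bigr)\,ds. \]
The goal is to show $\Delta_t^- = 0$ for all $t$, where $x^- := \max(-x,0)$. First I would apply Tanaka's formula to the convex function $x \mapsto x^- = (-x)^+$, which, since $\Delta_0^- = 0$, yields
\[ \Delta_t^- = -\int_0^t \mathbf{1}_{\{\Delta_s \le 0\}}\,d\Delta_s + \tfrac12 L_t^0(-\Delta), \]
where $L^0(-\Delta)$ is the local time of $-\Delta$ at level $0$.

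For the drift term, on the event $\{\Delta_s \le 0\}$ we have $X^1_s \le X^2_s$; writing $b^1(X^1_s) - b^2(X^2_s) = \bigl(b^1(X^1_s)-b^2(X^1_s)\bigr) + \bigl(b^2(X^1_s)-b^2(X^2_s)\bigr)$ and using $b^1 \ge b^2$ together with the global Lipschitz property of (say) $b^2$ with constant $K$, one gets $b^1(X^1_s)-b^2(X^2_s) \ge -K|\Delta_s| = -K\Delta_s^-$ on that set; the symmetric manipulation works if instead $b^1$ is the Lipschitz one. Hence $-\mathbf{1}_{\{\Delta_s\le 0\}}\bigl(b^1(X^1_s)-b^2(X^2_s)\bigr) \le K\Delta_s^-$. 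The stochastic integral is a local martingale because $\sigma$ is continuous and $X^1,X^2$ are continuous; localizing by $T_m := \inf\{t : |X^1_t| + |X^2_t| > m\}$ and taking expectations removes it, so that $E[\Delta_{t\wedge T_m}^-] \le K\int_0^t E[\Delta_{s\wedge T_m}^-]\,ds + \tfrac12 E[L_{t\wedge T_m}^0(-\Delta)]$.

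The main obstacle is to show that the local time term vanishes, $L_t^0(-\Delta) = 0$ a.s. This is exactly where the structure of $\sigma$ enters: one has $d\langle\Delta\rangle_s = \bigl(\sigma(X^1_s)-\sigma(X^2_s)\bigr)^2\,ds$, and since $\sigma$ obeys the modulus-of-continuity condition (\ref{eq_rho}) underlying pathwise uniqueness, the occupation times formula combined with the Yamada approximating sequence of $|x|$ used in the proof of Theorem \ref{theorem_pu1} forces $L_t^a(\Delta)\to 0$ as $a\to 0$ from both sides, whence $L^0_t(-\Delta)=0$. Granting this, Gronwall's inequality gives $E[\Delta_{t\wedge T_m}^-] = 0$ for every $m$; letting $m\to\infty$ (the solutions do not explode, so $T_m\to\infty$, and Fatou applies) yields $E[\Delta_t^-]=0$, hence $\Delta_t^- = 0$ a.s.\ for each fixed $t$, and continuity of $t\mapsto\Delta_t$ upgrades this to $P[\Delta_t \ge 0 \ \text{for all } t] = 1$. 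I expect the verification that $L^0(-\Delta)=0$ --- importing the Yamada--Watanabe local-time estimate into this two-process setting --- to be the only delicate point; the rest is bookkeeping.
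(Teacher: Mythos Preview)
The paper does not supply its own proof of this statement; it is quoted directly from \citet[Chapter IX, Theorem 3.7]{revuz} without argument. Your sketch is essentially the proof given there: Tanaka's formula applied to $(X^1-X^2)^-$, the Lipschitz bound on whichever drift is Lipschitz to set up Gronwall, and the Yamada--Watanabe modulus-of-continuity estimate on $\sigma$ to kill the local time at zero via the occupation-times identity. One remark worth recording: the paper's phrasing ``both fulfill pathwise uniqueness'' is looser than what Revuz--Yor actually assume --- the argument genuinely needs $\sigma$ to satisfy condition (\ref{eq_rho}), not merely abstract pathwise uniqueness, and you have correctly flagged the local-time step as the place where this hypothesis does the work. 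With that understood, the outline is sound.
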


\begin{Theorem}[Existence, Uniqueness and Non-Negativity of the Square Bessel Process]
For $\al\geq 0$ and $x_0\geq 0$ there exists an unique strong and non-negative solution $X$ of
\beqq X_t=x_0+2\int_0^t\sqrt{X_s}\,d\beta_s+\al t \label{besq2}\eeqq
on the entire interval $[0,\infty)$.\\
Moreover, if $\al\geq 2$ and $x_0>0$ the solution $X$ is positive a.s. on $[0,\infty)$. 
\label{theorem_expupbesq}\end{Theorem}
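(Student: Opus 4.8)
The strategy splits naturally into three parts: existence and uniqueness of a solution on a stochastic interval, non-explosion so that the solution lives on all of $[0,\infty)$, and finally the non-negativity (and strict positivity for $\al\ge 2$).

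First I would establish local existence and uniqueness. The drift $b(x)=\al$ is constant, hence globally Lipschitz; the diffusion coefficient $\sigma(x)=2\sqrt{x}$ is only $\tfrac12$-H\"older at $0$, so Theorem \ref{theorem_exsde} does not apply directly on a neighbourhood of $0$. I would therefore first work on the open set $U=(0,\infty)$, where $x\mapsto 2\sqrt x$ is locally Lipschitz, and invoke Theorem \ref{theorem_exsde} (applied with the driving semimartingale $Z_t=(\beta_t,t)$ and $f(x)=(2\sqrt x,\al)$) to get, for any $x_0>0$, a unique strong solution up to the stopping time $\tau_0=\inf\{t:X_t=0\}$. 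For pathwise uniqueness that is valid \emph{across} the boundary $x=0$ as well, I would invoke the Yamada–Watanabe criterion, Theorem \ref{theorem_pu1}: take $\rho(u)=2\sqrt u$, so that $\int_0^1\rho^{-2}(u)\,du=\int_0^1\frac{du}{4u}=\infty$, and $\kappa(u)=u$ (the drift is even Lipschitz), so $\int_0^1\kappa^{-1}(u)\,du=\infty$. Hence pathwise uniqueness holds for (\ref{besq2}) on $\bbr$, and by Remark \ref{remark_pu} every solution is strong. Existence of \emph{some} (weak, hence by pathwise uniqueness strong) solution when $x_0=0$ or when the path reaches $0$ follows from the continuity of $b,\sigma$ via Remark \ref{remark_pu}(iv); alternatively one can use the representation $X_t=B_t^\rmt B_t$ for integer $\al$ and an approximation argument, but the Yamada–Watanabe route is cleanest and covers all real $\al\ge0$.

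Next, non-explosion. Here $f(x)=(2\sqrt x,\al)$ does \emph{not} satisfy a global linear growth bound on its first component in the form $\|f(x)\|^2\le K(1+\|x\|^2)$ as a function of the matrix variable — but it does satisfy $4x+\al^2\le K(1+x^2)$... which is false for small $x$, so I would instead argue non-explosion by hand: on the open set $U=(0,\infty)$ the only way $T<\infty$ with $X$ not reaching $\partial U=\{0\}$ is $\limsup_{t\to T}X_t=\infty$, and this cannot happen because, by the partial integration / It\^o formula, $X_t=x_0+2\int_0^t\sqrt{X_s}\,d\beta_s+\al t$ has a local martingale part whose quadratic variation is $4\int_0^t X_s\,ds$, and a standard Gronwall estimate on $E(X_{t\wedge T_n})$ (where $T_n=\inf\{t:X_t\ge n\}$) gives $E(X_{t\wedge T_n})\le (x_0+\al t)$, so $X$ stays finite in $L^1$ on finite horizons; hence $T=\tau_0$, the hitting time of $0$. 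Thus the solution is defined and finite on $[0,\tau_0)$, and after hitting $0$ we continue using the global pathwise-unique solution on $\bbr$; the point is that the solution never explodes, so it is defined on all of $[0,\infty)$.

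Finally, non-negativity and positivity. For non-negativity I would use the Comparison Theorem: compare $X$ with the (deterministic, trivially pathwise-unique, globally Lipschitz) solution $Y\equiv$ the solution of $dY_t=2\sqrt{Y_t^+}\,d\beta_t+0\cdot dt$, $Y_0=0$, namely $Y\equiv 0$ — more precisely, extend $\sigma$ to $\bbr$ by $\sigma(x)=2\sqrt{x^+}$ (continuous), note both SDEs $dX=\sigma(X)d\beta+\al\,dt$ and $dY=\sigma(Y)d\beta+0\,dt$ enjoy pathwise uniqueness (Theorem \ref{theorem_pu1}) and $\al\ge 0=b^2$ with $b^2\equiv0$ globally Lipschitz, so with $X_0=x_0\ge0=Y_0$ the Comparison Theorem yields $X_t\ge Y_t=0$ for all $t$ a.s.; since on $\{X\ge0\}$ we have $\sigma(X)=2\sqrt X$, this $X$ solves the original equation. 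For strict positivity when $\al\ge 2$ and $x_0>0$, I would apply McKean's argument, Theorem \ref{theorem_mckean}: take $r=X$, which is continuous with $P(r_0>0)=1$, and choose $h$ with $h(X)$ a local martingale on $[0,\tau_0)$ blowing up at $0$. The natural scale function for (\ref{besq2}) is $h(x)=\log x$ when $\al=2$ and $h(x)=x^{1-\al/2}$ when $\al>2$: by It\^o's formula (Theorem \ref{theorem_ito}) on $U=(0,\infty)$,
\[
dh(X_t)=h'(X_t)\,dX_t+\tfrac12 h''(X_t)\,d[X,X]_t
=h'(X_t)\cdot 2\sqrt{X_t}\,d\beta_t+\bigl(\al\, h'(X_t)+2X_t h''(X_t)\bigr)dt,
\]
and the drift term $\al h'(x)+2x h''(x)$ vanishes identically for these choices of $h$ (this is precisely why they are the scale functions), so $h(X)$ is a continuous local martingale on $[0,\tau_0)$; moreover $\lim_{x\downarrow 0}h(x)=-\infty$ (for $\al=2$) respectively $+\infty$ (for $\al>2$). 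Theorem \ref{theorem_mckean} then gives $\tau_0=\infty$ a.s., i.e.\ $X_t>0$ for all $t$.

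\textbf{Main obstacle.} The delicate point is the boundary behaviour at $x=0$: the diffusion coefficient fails to be Lipschitz there, so one must combine the local theory on $(0,\infty)$ from Theorem \ref{theorem_exsde} with the Yamada–Watanabe pathwise-uniqueness criterion (Theorem \ref{theorem_pu1}) and the Comparison Theorem to get a single global, non-negative, pathwise-unique strong solution, and one must separately verify non-explosion by an a priori moment bound rather than by the linear-growth condition of Theorem \ref{theorem_exsde}. The positivity part for $\al\ge2$ is then a clean application of McKean's argument once the correct scale function is identified.
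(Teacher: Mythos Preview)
Your proof is correct and shares its core with the paper's argument: both of you use the Yamada--Watanabe criterion (Theorem~\ref{theorem_pu1}) with $\rho(u)=2\sqrt{u}$ on the extended SDE $dX_t=2\sqrt{|X_t|}\,d\beta_t+\al\,dt$ to get pathwise uniqueness and hence a unique strong solution via Remark~\ref{remark_pu}, and both of you then invoke the Comparison Theorem against the trivial solution $X^1\equiv 0$ (with $\al^1=0$, $x_0^1=0$) to obtain non-negativity.

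Two points of divergence. First, you work harder than necessary on non-explosion: your claim that $4x+\al^2\le K(1+x^2)$ fails for small $x$ is simply wrong (take $K=\max\{4,\al^2+1\}$), so the linear-growth route is available after all; the paper bypasses this entirely by appealing directly to Remark~\ref{remark_pu}(iv) for global weak existence. Second, for strict positivity when $\al\ge 2$ the paper just cites \citet[Chapter~XI, Proposition~1.5]{revuz} that $\{0\}$ is polar, whereas you carry out the scale-function computation and apply McKean's argument (Theorem~\ref{theorem_mckean}). Your route is exactly the one the paper flags in the Remark immediately following the theorem as an alternative ``analogously to the Proof of Theorem~\ref{theorem_exwp3}'', and it has the advantage of being self-contained and of rehearsing the very technique used later for the matrix-valued case.
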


\begin{proof}
We show with Theorem \ref{theorem_pu1} that pathwise uniqueness holds for the stochastic differential equation
\beam dX_t=2\sqrt{|X_t|}\,d\beta_t+\al\,dt, \quad X_0=x_0 \label{besq_with_abs}\eeam
like in \citet[p. 439]{revuz}. Since $|\sqrt{z}-\sqrt{z'}|\leq\sqrt{|z-z'|}$ for all $z,z'\geq 0$, we have that
\[ 2\, \left| \sqrt{|\xi|}-\sqrt{|\eta|} \right| \leq 2 \sqrt{\left|\,|\xi|-|\eta|\,\right|} \leq 2 \sqrt{|\xi-\eta|}=\rho(|\xi-\eta|) \quad\fa \xi,\eta\in\bbr \]
with $\rho(u)=2\sqrt{u}$. Clearly, 
\beqq \int_0^1 \rho^{-2}(u)\,du= \frac{1}{4} \int_0^1 \frac{1}{u}\,du=\infty \eeqq
With Remark \ref{remark_pu} we conclude that there exists a unique strong solution for (\ref{besq_with_abs}).\\
Next, we show that our solution never becomes negative: First, consider the case $x^1_0=0$ and $\al^1=0$. Obviously, $X^1\equiv 0$ is our unique strong solution in this case.
Secondly, consider an arbitrary $x^2_0\geq 0$ and $\al^2\geq 0$. From the above, we have a unique strong solution $X^2$. The comparison theorem implies that 
\[ P[X^2_t\geq X^1_t \textit{ for all } t\in\bbr_+]=P[X^2_t\geq 0 \textit{ for all } t\in\bbr_+]=1 \]
and thus $X^2_t$ is non-negative for all $t$ almost surely. Hence, we can discard the $|.|$ in (\ref{besq_with_abs}) and $X^2$ also is the unique strong solution of (\ref{besq2}).\\
Finally, \citet[Proposition 1.5, Chapter XI]{revuz} has shown that for $\al\geq 2$, the set $\{0\}$ is polar, i.e. $P(\inf\{s:\,X_s=0\}<\infty)=0$ for all initial values $x_0>0$. Hence, in this case the unique strong solution $X$ is positive for all $t\in\bbr_+$.
\end{proof}

\begin{Remark}
For $\al\geq 2$, one could also use McKean's argument (Theorem \ref{theorem_mckean}) to show that the square Bessel process is positive, analogously to the Proof of Theorem \ref{theorem_exwp3}.
\end{Remark}

%%%%%%%%%%%%%%%%%%%%%%%%%%%%%%%%%%%%%%%%%%%%%%%%%%
\section{General Wishart Processes:\\ Definition and Existence Theorems}

\begin{Definition}[Wishart Processes]
Let $B$ be a $p\times p$-dimensional Brownian motion, $B\thicksim\calbm_p$, $Q\in\calm_p(\bbr)$ and $K\in\calm_p(\bbr)$ be arbitrary matrices, $s_0\in\ov{\cals_p^+}$ the initial value and $\al\geq0$ a non-negative number. Then we call the stochastic differential equation
\beqq
dS_t=\sqrt{S_t}\,dB_t\,Q+Q^\rmt dB_t^\rmt \sqrt{S_t}+(S_t K+K^\rmt S_t+ \al Q^\rmt Q)\,dt,\quad S_0=s_0
\label{eq_wp}\eeqq
the \emph{Wishart SDE}.\\
A strong solution $S$ of (\ref{eq_wp}) in $\ov{\cals_p^+}$ is said to be a ($p\times p$-dimensional) \emph{Wishart process} with parameters $Q,K,\al,s_0$, written $S\thicksim\calwp_p(Q,K,\al,s_0)$.
\label{def_wp}\end{Definition}

To understand (\ref{eq_wp}) intuitively, it may help to have a look at a approximation of the form
\[ S_{t+h}\approx S_t+\sqrt{S_t}\int_t^{t+h}\,dB_s\,Q+\int_t^{t+h} Q^\rmt dB_s^\rmt\sqrt{S_t}+(S_t K+K^\rmt S_t+ \al Q^\rmt Q)h \]
Here we can see that $Q$ controls the covariance of our normal distributed fluctuations, and that this fluctuations are proportional to the square root of our process:
\[ \sqrt{S_t}\int_t^{t+h} dB_s\,Q | S_t \thicksim \sqrt{S_t}\cdot \caln_{p,p}(0,I_p\otimes Q^\rmt Q h )  \]
Hence, the fluctuations decrease quickly if our process goes to zero.\\
As $\al Q^\rmt Q$ is positive semidefinite, the parameter $\al$ determines how large the drift away from zero is.\\
\noindent Just like in the one-dimensional case, the solution of (\ref{eq_wp}) has a \emph{mean reverting} property. To understand that, we consider the deterministic equivalent of (\ref{eq_wp}), that is the ordinary differential equation
\[ \frac{dS_t}{dt}=S_t K+K^\rmt S_t+ \al Q^\rmt Q, \quad S_0=s_0 \]
To simplify the notation, we define the operator $\calc:\cals_p\rightarrow\cals_p,X\mapsto XK+K^\rmt X$ and set $M=\al Q^\rmt Q$. Then
\beqq \frac{dS_t}{dt}=\calc S_t + M, \quad S_0=s_0 \label{eq_detwis}\eeqq
From the theory of ordinary linear differential equations (see \citet[p. 83]{rep}, for example), we know that the solution of (\ref{eq_detwis}) is given by
\beqq S_t=e^{\calc t}\left(s_0+\int_0^t e^{-\calc u}\,du M\right) \label{sol_detwis}\eeqq
Indeed, by partial integration follows
\[ \frac{dS_t}{dt}=\calc\underbrace{\left(e^{\calc t}\left(s_0+\int_0^t e^{-\calc u}\,du M\right)\right)}_{=S_t}+\underbrace{e^{\calc t}e^{-\calc t}}_{=I_p}M \]
Evaluating the integral in (\ref{sol_detwis}) gives
\beqq S_t=e^{\calc t}( (s_0 + \calc^{-1} M) - \calc^{-1} M \label{sol_detwis2}\eeqq
If the eigenvalues of $K$ only have negative real parts, $Re(\sigma(K))\subseteq(-\infty,0)$, then, because of $\sigma(\calc)=\sigma(K)+\sigma(K)$, we also have $Re(\sigma(\calc))\subseteq(-\infty,0)$. From the proof of Theorem \ref{statd_oup} we get that $\lim_{t\rightarrow\infty}e^{\calc t}=0$ and thus from (\ref{sol_detwis2})
\[ \lim_{t\rightarrow\infty}S_t= - \calc^{-1} M \]
Hence, the deterministic solution of (\ref{eq_detwis}) converges to $- \calc^{-1} M $. Thus, the stochastic solution of (\ref{eq_wp}) will fluctuate around $- \calc^{-1} M $.

\vspace{0.5cm}
Considering the fact that the fluctuations decrease quickly if our processes goes to zero and that we have a non-negative definite drift $\al Q^\rmt Q$ away from zero one may think that this process never leaves $\ov{\cals_p^+}$. As we have shown in the last section, for the one dimensional case (square Bessel process) we have a unique strong solution for all $t$ that never becomes negative.
Unlike in this case, we can not show for $p\geq 2$ that there still exists a unique strong solution after the process
hits the boundary of $\cals_p^+$ (that means $S_t\in\ov{\cals_p^+}\backslash\cals_p^+$) the first time. We are only able to give sufficient conditions such that the process stays in the set of all positive definite matrices, and then we have a unique strong solution for all $t$.
\vspace{0.5cm}\\
One reason why we cannot transfer the proof of Theorem \ref{theorem_expupbesq} to the matrix variate case is that  Theorem \ref{theorem_pu1} cannot be generalized to matrix variate stochastic differential equations. In the matrix variate version of Theorem \ref{theorem_pu1} as stated in \citet[Theorem1]{yamada2}, the constraint (\ref{eq_rho}) turns to
\beam
&& \int_{\{U\in\ov{\cals_p^+}:||U||\leq1\}} \rho^{-2}(U)U\,dU=\infty \label{eq_rho2}\\
&& U\mapsto \rho^{2}(U)U^{-1} \textnormal{ is concave}
\eeam
but
\[ \int_{\{U\in\ov{\cals_p^+}:||U||\leq1\}} \rho^{-2}(U)U\,dU=\int_{\{U\in\ov{\cals_p^+}:||U||\leq1\}} I_p\,dU<\infty \]
for $\rho(U)=\sqrt{U}$. Hence the theorem can not be applied to our case.\\
Furthermore, \citet[Remark 2]{yamada2} have also shown that (\ref{eq_rho2}) is, for $p\geq 3$, nearly best possible in the sense that, if $\int_{U\in\ov{\cals_p^+}:||U||\leq 1} \rho^{-2}(U)U\,du<\infty$ and $\rho$ is subadditive then,
a stochastic differential equation can be constructed that has two solutions, and thus pathwise uniqueness cannot hold. But the fact the we cannot use \citet[Theorem1]{yamada2} does not give any evidence whether pathwise uniqueness holds for the Wishart SDE or not.
\vspace{1cm}\\
Before we can begin our mathematical analysis of (\ref{eq_wp}), we need a few auxiliary results:

\begin{Lemma}[Quadratic Variation of the Wishart Process]
Let $S\thicksim\calwp_p(Q,K,\al,s_0)$. Then
\[ d[S_{ij},S_{kl}]_t= S_{t,ik} (Q^\rmt Q)_{jl} \,dt +  S_{t,il} (Q^\rmt Q)_{jk} \,dt +  S_{t,jk} (Q^\rmt Q)_{il} \,dt +  S_{t,jl} (Q^\rmt Q)_{ik} \,dt \]
as long as $S$ exists.
\label{lemma_qvwp}\end{Lemma}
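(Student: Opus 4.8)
The plan is to isolate the local-martingale part of the Wishart SDE, since the drift term $S_tK+K^\rmt S_t+\al Q^\rmt Q$ has finite variation and therefore contributes nothing to any quadratic covariation. Write $R:=\sqrt{S_t}$, which is symmetric, so $R^\rmt=R$ and $RR=S_t$. The martingale part of $dS_t$ is $dM_t:=R\,dB_t\,Q+Q^\rmt\,dB_t^\rmt\,R$, and the first step is to expand its $(i,j)$ entry entrywise in the scalar Brownian motions $B_{ab}$. For the second summand one has to relabel the summation indices carefully: $(Q^\rmt\,dB_t^\rmt\,R)_{ij}=\sum_{a,b}Q_{ai}\,dB_{t,ba}\,R_{bj}=\sum_{a,b}R_{aj}Q_{bi}\,dB_{t,ab}$ after swapping $a\leftrightarrow b$. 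Combining this with $(R\,dB_t\,Q)_{ij}=\sum_{a,b}R_{ia}Q_{bj}\,dB_{t,ab}$ gives
\[ dM_{t,ij}=\sum_{a,b=1}^p\bigl(R_{ia}Q_{bj}+R_{aj}Q_{bi}\bigr)\,dB_{t,ab}. \]

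Next I would use that $B\thicksim\calbm_p$ consists of independent standard Brownian motions, so $d[B_{ab},B_{cd}]_t=\mathbf{1}_{\{a=c\}}\mathbf{1}_{\{b=d\}}\,dt$. Inserting the expansion for $dM_{t,ij}$ and for $dM_{t,kl}$ into the bilinearity of the covariation and collapsing the Kronecker deltas leaves
\[ d[S_{ij},S_{kl}]_t=\sum_{a,b=1}^p\bigl(R_{ia}Q_{bj}+R_{aj}Q_{bi}\bigr)\bigl(R_{ka}Q_{bl}+R_{al}Q_{bk}\bigr)\,dt. \]

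The final step is to multiply out into four terms and factor each into a sum over $a$ times a sum over $b$. The sums over $a$ produce entries of $RR=S_t$ — here the symmetry of the matrix square root is essential, e.g. $\sum_a R_{ia}R_{ka}=(RR^\rmt)_{ik}=S_{t,ik}$ and $\sum_a R_{aj}R_{ka}=(RR)_{jk}=S_{t,jk}$ — while the sums over $b$ produce entries of $Q^\rmt Q$, e.g. $\sum_b Q_{bj}Q_{bl}=(Q^\rmt Q)_{jl}$. Matching the four resulting products against $S_{t,ik}(Q^\rmt Q)_{jl}+S_{t,il}(Q^\rmt Q)_{jk}+S_{t,jk}(Q^\rmt Q)_{il}+S_{t,jl}(Q^\rmt Q)_{ik}$ completes the proof (valid on the stochastic interval where $S$ exists, to which It\^o calculus applies). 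There is no substantive obstacle here beyond bookkeeping: the only points requiring care are the index relabelling in the transposed summand $Q^\rmt\,dB_t^\rmt\,R$ and the repeated use of $R^\rmt=R$ and $S_t^\rmt=S_t$ when collapsing the $a$-sums.
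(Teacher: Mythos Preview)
Your proof is correct and follows essentially the same route as the paper: both isolate the local-martingale part $H_t=\sqrt{S_t}\,dB_t\,Q+Q^\rmt dB_t^\rmt\sqrt{S_t}$, expand $H_{t,ij}$ componentwise in the independent scalar Brownian motions, apply $d[B_{ab},B_{cd}]_t=\mathbf{1}_{\{a=c,b=d\}}\,dt$, and then collapse the resulting sums using the symmetry of $\sqrt{S_t}$ and $S_t$. Your write-up is in fact somewhat more explicit than the paper's, which only works out one of the four cross-terms and leaves the rest to the reader.
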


\begin{proof}
Define $H_t:=\sqrt{S_t}dB_tQ+Q^\rmt dB_t^\rmt \sqrt{S_t}$. Then $[S_{ij},S_{kl}]=[H_{ij},H_{kl}]$.
\[ H_{t,ij}=\sum_{m,n} (\sqrt{S_t})_{in} dB_{t,nm} Q_{mj} + Q_{mi} dB_{t,nm} (\sqrt{S_t})_{nj} \]
The first summand of $d[H_{ij},H_{kl}]_t$ is equal to
\[ \sum_{m,n} (\sqrt{S_t})_{in} (\sqrt{S_t})_{kn} Q_{mj} Q_{ml} \,dt \]
Because $S_t$ is symmetric this term can be simplified to
\[ S_{t,ik} (Q^\rmt Q)_{jl} \,dt \]
The other three summands can be evaluated in the same way.
\end{proof}

\begin{Lemma}
Let $S\in\cals_p^+$ be a stochastic process, $B \thicksim \calbm_p$ and $h:\calm_p(\bbr)\rightarrow\calm_p(\bbr)$. Then there exists a one dimensional Brownian motion $\beta^h$ such that
\[ \tr\left(\int_0^t h(S_u)\,dB_u\right)=\int_0^t \sqrt{\tr(h(S_u)^\rmt h(S_u))}\,d\beta_u^h \]
\label{lemma_trbm}\end{Lemma}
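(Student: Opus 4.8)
The plan is to show that the left-hand side is a continuous local martingale and then compute its quadratic variation, so that Theorem \ref{theorem_dambis} (or rather its underlying mechanism via L\'evy's Theorem \ref{theorem_levy}) produces the desired Brownian motion after a time change. First I would set $M_t := \tr\left(\int_0^t h(S_u)\,dB_u\right)$. Writing out the matrix stochastic integral componentwise using the definition given in the text, $M_t = \sum_{i,k} \int_0^t h(S_u)_{ik}^{\rmt}\,dB_{u,ik}$ up to reindexing, i.e. $M_t = \sum_{i,k}\int_0^t h(S_u)_{ki}\,dB_{u,ik}$; in any case $M$ is a sum of one-dimensional It\^o integrals against the independent scalar Brownian motions $B_{ij}$, hence a continuous local martingale with $M_0 = 0$.

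Next I would compute the quadratic variation. Since the $B_{ij}$ are independent, $d[B_{ij},B_{kl}]_t = \mathbf{1}_{\{i=k,\,j=l\}}\,dt$, so only the diagonal terms survive and
\[
d[M,M]_t = \sum_{i,j} h(S_u)_{??}^2\,dt = \tr\!\big(h(S_u)^{\rmt} h(S_u)\big)\,dt,
\]
using Lemma \ref{lemma_trvec}(i), which identifies $\tr(A^{\rmt}A) = \mathrm{vec}(A)^{\rmt}\mathrm{vec}(A) = \sum_{i,j} A_{ij}^2$. (I would be careful to get the transpose bookkeeping right, but the point is that $\tr(h(S_u)^{\rmt}h(S_u))$ is transpose-invariant, so it does not matter.) Thus $[M,M]_t = \int_0^t \tr(h(S_u)^{\rmt}h(S_u))\,du$, which is an absolutely continuous, nondecreasing process.

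Finally I would define $\beta^h$ by ``dividing out'' the local quadratic variation density: set
\[
\beta^h_t := \int_0^t \frac{1}{\sqrt{\tr(h(S_u)^{\rmt}h(S_u))}}\,dM_u
\]
on the set where the integrand is well-defined. Then $\beta^h$ is a continuous local martingale with $[\beta^h,\beta^h]_t = \int_0^t \tr(h(S_u)^{\rmt}h(S_u))^{-1}\,d[M,M]_u = \int_0^t 1\,du = t$, so by L\'evy's Theorem \ref{theorem_levy} (the one-dimensional case) $\beta^h$ is a standard Brownian motion, and reintegrating gives $\int_0^t \sqrt{\tr(h(S_u)^{\rmt}h(S_u))}\,d\beta^h_u = \int_0^t \,dM_u = M_t$, as claimed.

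The main obstacle is the degeneracy when $\tr(h(S_u)^{\rmt}h(S_u))$ vanishes on a set of positive measure, so that the reciprocal is undefined. The standard remedy (cf. the Dambis--Dubins--Schwarz construction in \citet[Chapter V]{revuz}) is to replace $1/\sqrt{\tr(h(S_u)^{\rmt}h(S_u))}$ by $\tr(h(S_u)^{\rmt}h(S_u))^{-1/2}\mathbf{1}_{\{\tr(h(S_u)^{\rmt}h(S_u))>0\}} + \mathbf{1}_{\{\tr(h(S_u)^{\rmt}h(S_u))=0\}}$ and, if necessary, enlarge the probability space to adjoin an independent Brownian motion that fills in the ``frozen'' intervals; on those intervals $M$ itself is constant (its quadratic variation does not grow), so the identity $\int \sqrt{\tr(h(S_u)^{\rmt}h(S_u))}\,d\beta^h_u = M_t$ is unaffected. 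In the present application $h$ will typically be such that $\tr(h(S)^{\rmt}h(S))>0$ whenever $S\in\cals_p^+$, so this technicality does not bite, but it is the one point that deserves a careful remark.
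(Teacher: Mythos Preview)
Your proposal is correct and follows essentially the same route as the paper: define $\beta^h$ by dividing the differential of $M_t=\tr\bigl(\int_0^t h(S_u)\,dB_u\bigr)$ by $\sqrt{\tr(h(S_u)^\rmt h(S_u))}$, check that $[\beta^h,\beta^h]_t=t$, and invoke L\'evy's Theorem~\ref{theorem_levy}. The paper writes $\beta^h$ directly as the double sum $\sum_{i,n}\int_0^t h(S_u)_{in}/\sqrt{\tr(h(S_u)^\rmt h(S_u))}\,dB_{u,ni}$ rather than as an integral against $M$, but this is only a cosmetic difference. For the degeneracy the paper simply declares the convention $0/0:=1$ and moves on; your indicator-plus-auxiliary-Brownian-motion remark is more careful, though as you note the issue is moot in the applications that follow.
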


\begin{proof}
Define 
\[ \beta_T^h:=\sum_{i,n=1}^p \int_0^T \frac{h(S_t)_{in}}{\sqrt{\tr(h(S_t)^\rmt h(S_t))}}\,dB_{t,ni} \]
Observe that the denominator is zero if and only if the numerator is zero, so we make the convention $\frac{0}{0}:=1$. Then we have by definition
\[ \tr(h(S_t)\,dB_t) = \sum_{i,n=1}^p  h(S_t)_{in}\,dB_{t,ni} = \sqrt{\tr(h(S_t)^\rmt h(S_t))}\,d\beta_t^h \]
For every $i,n=1,\ldots,p$ we have
\beao
\int_0^T \left(\frac{h(S)_{in}}{\sqrt{\tr(h(S)^\rmt h(S))}}\right)^2\,dt = \int_0^T \frac{h(S)_{in}^2}{\tr(h(S)^\rmt h(S))}\,dt \\
\leq  \int_0^T \sum_{i,n=1}^p \frac{h(S)_{in}^2}{\tr(h(S)^\rmt h(S))}\,dt = T < \infty
\eeao
and therefore $\beta^h$ is a sum of continuous local martingales and thus a continuous local martingale itself. Furthermore
\[ [\beta^h,\beta^h]_T= \int_0^T \sum_{i,n} \frac{h(S_t)_{in}^2}{\tr(h(S_t)^\rmt h(S_t))}\,dt = \int_0^T dt=T\]
and with Lévy's Theorem the proof is complete.
\end{proof}

\begin{Lemma}
The matrix variate square root function is locally Lipschitz on $\cals_p^+$.
\end{Lemma}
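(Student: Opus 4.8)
The plan is to establish local Lipschitz continuity of the map $\sqrt{\,\cdot\,}\colon\cals_p^+\to\cals_p^+$ by working on the spectral side. Fix $A_0\in\cals_p^+$ and let $\mu_0>0$ be its smallest eigenvalue. Choose a compact neighbourhood $\calu(A_0)\subset\cals_p^+$ of $A_0$ (say a closed ball of small radius in the trace norm) small enough that every $A\in\calu(A_0)$ still has all eigenvalues bounded below by $\mu_0/2$ and above by some $\Lambda$. On such a neighbourhood the scalar square root is being applied only to numbers in the interval $[\mu_0/2,\Lambda]$, where $t\mapsto\sqrt t$ is Lipschitz with constant $1/(2\sqrt{\mu_0/2})$; this uniform spectral gap is what will ultimately furnish the Lipschitz coefficient $C(A_0)$.

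First I would reduce to a Lyapunov-type identity. Given $A,\tilde A\in\calu(A_0)$ write $R=\sqrt A$, $\tilde R=\sqrt{\tilde A}$, both in $\cals_p^+$. Then
\[
A-\tilde A = R^2-\tilde R^2 = R(R-\tilde R)+(R-\tilde R)\tilde R,
\]
so the difference $X:=R-\tilde R\in\cals_p$ solves the Sylvester equation $RX+X\tilde R = A-\tilde A$. Because $R$ and $\tilde R$ are positive definite, the linear operator $X\mapsto RX+X\tilde R$ on $\cals_p$ is invertible (its spectrum is $\sigma(R)+\sigma(\tilde R)\subset[\sqrt{\mu_0/2},\ldots]$, which avoids $0$; this is exactly the situation of Lemma \ref{lemma_linop}, applied with the roles of the two factors made symmetric, or one simply quotes that for positive definite $R,\tilde R$ the Sylvester map is a bijection). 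Hence
\[
R-\tilde R = \call_{R,\tilde R}^{-1}(A-\tilde A),\qquad \call_{R,\tilde R}\colon X\mapsto RX+X\tilde R,
\]
and the task becomes bounding the operator norm of $\call_{R,\tilde R}^{-1}$ uniformly over $\calu(A_0)$.

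The main obstacle — and the only non-formal step — is that uniform bound. The clean way is to use the integral representation $\call_{R,\tilde R}^{-1}(Y)=\int_0^\infty e^{-sR}\,Y\,e^{-s\tilde R}\,ds$, valid since $R,\tilde R\in\cals_p^+$; taking the trace norm under the integral and using $\|e^{-sR}\|\le e^{-s\sqrt{\mu_0/2}}$ for $R$ symmetric with eigenvalues $\ge\sqrt{\mu_0/2}$ (and likewise for $\tilde R$) gives $\|\call_{R,\tilde R}^{-1}(Y)\|\le \bigl(\int_0^\infty e^{-2s\sqrt{\mu_0/2}}ds\bigr)\|Y\| = \tfrac{1}{2\sqrt{\mu_0/2}}\|Y\|$. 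Therefore
\[
\|\sqrt A-\sqrt{\tilde A}\| = \|R-\tilde R\| \le \frac{1}{2\sqrt{\mu_0/2}}\,\|A-\tilde A\|\qquad\forall\,A,\tilde A\in\calu(A_0),
\]
so $C(A_0):=1/(2\sqrt{\mu_0/2})=1/\sqrt{2\mu_0}$ works. Since $A_0\in\cals_p^+$ was arbitrary, $\sqrt{\,\cdot\,}$ is locally Lipschitz on $\cals_p^+$. (Two remarks I would add: the local coefficient genuinely blows up as $A_0$ approaches the boundary $\partial\cals_p^+$, which is why the statement is only local and why the Wishart SDE needs separate care near that boundary; and the same computation shows $\sqrt{\,\cdot\,}$ fails to be globally Lipschitz, consistent with the one-dimensional picture of $t\mapsto\sqrt t$ near $0$.)
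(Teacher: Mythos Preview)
Your argument is correct. The paper itself does not actually prove this lemma; it simply refers the reader to \citet{stelzer}. Your approach---reducing to the Sylvester equation $RX+X\tilde R = A-\tilde A$ with $X=\sqrt A-\sqrt{\tilde A}$, then inverting via the integral representation $\int_0^\infty e^{-sR}Y e^{-s\tilde R}\,ds$ and exploiting the uniform lower spectral bound $\sqrt{\mu_0/2}$ on the neighbourhood---is a clean, self-contained proof and even produces the explicit local Lipschitz constant $C(A_0)=1/\sqrt{2\mu_0}$.

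One small point worth tightening: the Sylvester map $X\mapsto RX+X\tilde R$ does not in general preserve $\cals_p$ when $R\neq\tilde R$, so strictly speaking you should regard it as an operator on all of $\calm_p(\bbr)$ (where invertibility follows from $\sigma(R)\cap\sigma(-\tilde R)=\emptyset$, both spectra being positive). This does not affect your estimate at all, since you already know the unique solution is $X=R-\tilde R$ and the integral formula and norm bound go through on $\calm_p(\bbr)$ unchanged.
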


\begin{proof} See \citet{stelzer}. \end{proof}

\begin{Theorem}[Existence and Uniqueness of the Wishart Process I]\ \\
For every initial value $s_0\in\cals_p^+$ there exists a unique strong solution $S$ of the Wishart SDE (\ref{eq_wp}) in the cone $\cals_p^+$ of all positive definite matrices up to the stopping time
\[ T=\inf\{s:\,\det(S_s)=0\}>0 \textit{ a.s.} \]
\label{theorem_exwp1}\end{Theorem}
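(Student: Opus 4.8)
The plan is to apply the general existence theorem for SDEs driven by a continuous semimartingale, namely Theorem \ref{theorem_exsde}, to the Wishart SDE (\ref{eq_wp}) rewritten on the open set $U=\cals_p^+$. First I would recast (\ref{eq_wp}) in the abstract form $dS_t=f(S_t)\,dZ_t$ required by Theorem \ref{theorem_exsde}: here the driving continuous semimartingale $Z$ is assembled from the matrix Brownian motion $B$ and a deterministic drift term $t\mapsto t$ (a continuous L\'evy process in the sense used in the excerpt), and the coefficient $f$ encodes both the diffusion part $X\mapsto(\sqrt{X}\,dB\,Q+Q^\rmt dB^\rmt\sqrt{X})$ and the affine drift $X\mapsto XK+K^\rmt X+\al Q^\rmt Q$. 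One must be a little careful to phrase this as a single matrix equation of the type (\ref{eq_exsde}); the standard device is to stack $B$ together with the scalar time component, so that $f$ becomes a map $U\to\calm_{d,m}(\bbr)$ acting linearly on the increments of $Z$. The affine drift and the constant-in-$X$ structure of the covariation of the noise (visible in Lemma \ref{lemma_qvwp}) make this repackaging routine.

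Next I would verify the hypotheses of Theorem \ref{theorem_exsde}. The drift coefficient $X\mapsto XK+K^\rmt X+\al Q^\rmt Q$ is affine, hence globally Lipschitz on all of $\calm_p(\bbr)$, so it is certainly locally Lipschitz on $\cals_p^+$. For the diffusion coefficient, the only nontrivial ingredient is the matrix square root $X\mapsto\sqrt{X}$, and the excerpt already records that this is locally Lipschitz on $\cals_p^+$ (the Lemma immediately preceding the statement, citing \citet{stelzer}); multiplying by the fixed matrices $Q$, $Q^\rmt$ preserves local Lipschitz continuity, so $f$ is locally Lipschitz on $U=\cals_p^+$. I would also exhibit an exhausting sequence $(U_n)$ of convex closed subsets of $\cals_p^+$ with $U_n\subset U_{n+1}$ and $\bigcup U_n=\cals_p^+$ — for instance $U_n=\{X\in\cals_p:\ \tfrac1n I_p\preceq X\preceq nI_p\}$, which is convex, closed, and contained in $\cals_p^+$. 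With these checks in place, Theorem \ref{theorem_exsde} delivers a stopping time $T$ and a unique $\cals_p^+$-valued strong solution on $[0,T)$, with $T>0$ a.s.\ since $s_0\in\cals_p^+=U$ is an interior point.

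Finally I would identify the exit time. Theorem \ref{theorem_exsde} says that on $\{T<\infty\}$ either $S$ reaches $\partial U=\partial\cals_p^+$ at time $T$ or $\|S_t\|\to\infty$ as $t\uparrow T$. The boundary of $\cals_p^+$ inside $\cals_p$ is exactly the set of positive semidefinite matrices that are singular, i.e.\ $\{X\in\ov{\cals_p^+}:\det(X)=0\}$; together with the fact that by continuity $\det(S_t)\to 0$ whenever $S_t$ approaches this boundary, this shows $T=\inf\{s:\det(S_s)=0\}$, once one rules out explosion on $\{T<\infty\}$. Explosion at a finite boundary point $X_\ast$ with $\det(X_\ast)\neq 0$ cannot happen because such $X_\ast$ lies in the interior $\cals_p^+$; and genuine escape to infinity in finite time is excluded by the same boundary-alternative together with continuity of $S$, so the only way to leave $\cals_p^+$ is through its singular boundary, giving $T=\inf\{s:\det(S_s)=0\}$.

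The main obstacle I anticipate is purely bookkeeping: faithfully rewriting the two-term noise $\sqrt{S_t}\,dB_t\,Q+Q^\rmt dB_t^\rmt\sqrt{S_t}$ plus affine drift as a single equation $dS_t=f(S_t)\,dZ_t$ in the exact format of Theorem \ref{theorem_exsde} (with one continuous semimartingale $Z$ and a matrix-valued locally Lipschitz $f$), and checking that the resulting $f$ indeed maps into the right matrix space and is locally Lipschitz there. None of this is deep, but it is where the care is needed; everything else is a direct citation of Theorem \ref{theorem_exsde} and the local Lipschitz property of $\sqrt{\cdot}$.
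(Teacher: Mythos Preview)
Your overall approach matches the paper's: apply Theorem~\ref{theorem_exsde} on $U=\cals_p^+$, exhibit an exhausting sequence of closed convex sets, recast the SDE as $dS_t=f(S_t)\,dZ_t$ with $Z$ built from $B$ and $t$, and verify that $f$ is locally Lipschitz using the local Lipschitz property of the matrix square root together with the affine structure of the drift. The paper uses $U_n=\{M\in\cals_p^+:\lambda_{\min}(M)\ge 1/n\}$; your $U_n=\{\tfrac1n I_p\preceq X\preceq nI_p\}$ works just as well.

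There is, however, a genuine gap in your identification of the exit time. Theorem~\ref{theorem_exsde} gives the alternative: on $\{T<\infty\}$ either $S_T\in\partial U$ \emph{or} $\limsup_{t\uparrow T}\|S_t\|=\infty$. You dismiss the second possibility by saying that ``genuine escape to infinity in finite time is excluded by the same boundary-alternative together with continuity of $S$.'' This is not an argument: the alternative itself does not rule out explosion, and path continuity does not prevent $\|S_t\|\to\infty$ in finite time (think of $t\mapsto(1-t)^{-1}$). Without excluding explosion you cannot conclude $T=\inf\{s:\det(S_s)=0\}$; a priori $T$ could be strictly smaller than that hitting time, with $S$ blowing up while $\det(S_t)$ stays bounded away from zero.

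The paper closes this gap by verifying the linear growth condition in Theorem~\ref{theorem_exsde}: the drift $X\mapsto XK+K^\rmt X+\al Q^\rmt Q$ is affine, and the diffusion part is controlled because $\|\sqrt{X}\|$ grows like $\sqrt{\|X\|}$, so $\|f(X)\|^2\le C(1+\|X\|^2)$ for some constant $C$. Once linear growth is established, Theorem~\ref{theorem_exsde} guarantees no explosion, and then indeed the only exit mechanism is $S_T\in\partial\cals_p^+=\{\det=0\}\cap\ov{\cals_p^+}$, giving $T=\inf\{s:\det(S_s)=0\}$. You should add this verification.
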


\begin{proof}
We check that the assumptions of Theorem \ref{theorem_exsde} are fulfilled. Observe, that the set $U=\cals_p^+$ is open and that there exists a sequence of convex closed subsets $(U_n)_{n\in\bbn}$ of $U$ that are increasing w.r.t. $\subseteq$ and $\bigcup_{n\in\bbn}U_n=U$. Indeed, observe that  
the function that maps every Matrix $M\in\cals_p^+$ to its smallest eigenvalue, \[ \lambda_{\min}:\cals_p^+\rightarrow(0,\infty),M\mapsto\lambda_{\min}(M)=\min_{||v||=1}v^\rmt M v \]
is continuously differentiable, see \citet[Lemma 5.1]{num}. Hence, 
\[ U_n:=\{M\in\cals_p^+:\,\lambda_{\min}(M)\geq\frac{1}{n}\}=\lambda_{\min}^{-1}\left (\underbrace{\left[\frac{1}{n},\infty\right)}_{closed} \right) \]
is a closed set; and also convex, as for all $ M_1,M_2\in U_n, \al\in[0,1] $:
\beao
\lambda_{\min}(\al M_1 + (1 - \al) M_2) & = & \min_{||v||=1}v^\rmt (\al M_1 + (1 - \al) M_2) v \\
 & \geq & \al \min_{||v||=1}v^\rmt M_1 v + (1-\al) \min_{||v||=1}v^\rmt M_2 v \\ & \geq & 
 \al \frac{1}{n} + (1-\al) \frac{1}{n} = \frac{1}{n}
\eeao

Next, we define for $S\in\cals_p^+$ the linear operator by
\[ \calz_{S}=\calz(S):\calm_p(\bbr)\rightarrow\cals_p,X\mapsto \sqrt{S}XQ+Q^\rmt X^\rmt \sqrt{S} \]
and as before
\[ \calc:\cals_p\rightarrow\cals_p,X\mapsto XK + K^\rmt X \]
Then we can write (\ref{eq_wp}) in the form
\beao S_t & = & \int_0^t \calz_{S_u} dB_u + \int_0^t (\calc S_u + \al Q^\rmt Q)\,du \\
& = & \int_0^t \underbrace{\begin{pmatrix} \calz(S_u) \\ \calc S_u + \al Q^\rmt Q \end{pmatrix} ^\rmt}_{:=f(S_u)}\,
			d\underbrace{\begin{pmatrix} B_u \\ u I_p \end{pmatrix}}_{=:Z_u} \\
& = & \int_0^t f(S_u)\,dZ_u 			
\eeao
\noindent where \citet{met} give a formal justification why we can integrate w.r.t $\calz_{S_u}dB_u$. Obviously, $Z$ is a continuous semimartingale.
We still have to show that the function $f$ is locally Lipschitz. For any norm given on $\calm_{p}(\bbr)$, we define a norm on $\calm_{2p,p}(\bbr)$ by
\[ ||(X,Y)^\rmt||_{\calm_{2p,p}(\bbr)}=||X||_{\calm_{p}(\bbr)}+||Y||_{\calm_{p}(\bbr)} \quad\fa X,Y\in\calm_{p}(\bbr) \]
but discard the subscripts as it should be obvious which norm to use. Then, for all $S,R\in\cals_p^+$ we have
\[ ||f(S)-f(R)||=||\calz_S - \calz_R|| + ||\calc(S-R)|| \]
Because $\calc$ is a bounded linear operator ($\dim\cals_p<\infty$), we have
\[ ||\calc(S-R)||\leq ||\calc|| \, ||S-R|| \quad\fa S,R\in\cals_p^+ \textnormal{ with } ||\calc||<\infty \]
Let be $S,R\in\cals_p^+$. For $X\in\calm_p(\bbr)$ arbitrary we have
\beao ||(\calz_S - \calz_R) X|| & = & ||(\sqrt{S}-\sqrt{R})XQ+Q^\rmt X^\rmt (\sqrt{S}-\sqrt{R})|| \\
& \leq & 2 ||\sqrt{S}-\sqrt{R}|| \, ||X|| \, ||Q|| \\
& \Rightarrow & ||\calz_S-\calz_R||=\sup_{X\in\calm_p(\bbr)\backslash\{0\}} \frac{||(\calz_S - \calz_R) X||}{||X||} \leq 2 ||\sqrt{S}-\sqrt{R}|| \, ||Q||
\eeao
Let now be $Y\in\cals_p^+$. As the matrix variate square root function is locally Lipschitz, there exists an open neighbourhood $\calu(Y)$ of $Y$ and a constant $C(Y)$, such that for all $S,R\in\calu(Y)$
\[ ||\sqrt{S}-\sqrt{R}|| \leq C(Y) \, ||S-R|| \]
Hence, we have for all $S,R\in\calu(Y)$
\[ ||\calz_S-\calz_R|| \leq \underbrace{2 \, C(Y) \, ||Q||}_{=:C'(Y)} ||S-R|| = C'(Y) \, ||S-R|| \]
i.e. $\calz$ is locally Lipschitz on $\cals_p^+$. At all $f$ is locally Lipschitz because
\beqq ||f(S)-f(R)||\leq \underbrace{( C'(Y) + ||\calc|| )}_{=:K} \, ||S-R|| = K \,||S-R|| \quad\fa S,R\in\calu(Y) \label{f_loclip}\eeqq

Hence, we know that there exists a non-zero stopping time $T>0$ such that there exists a unique $\cals_p^+$-valued strong solution $S$ of (\ref{eq_wp}) for $t\in[0,T)$. If $T<\infty$, $S_T$ either hits the boundary of $\cals_p^+$ or explodes. We show that the latter cannot happen. Fix $R\in\calu(Y)$ and set $S=Y$. Then we from (\ref{f_loclip})
\beao ||f(Y)-f(R)|| &\leq&  K\, (||Y||+||R||) \\
\Rightarrow ||f(Y)-f(R)||^2 &\leq& K^2\, (||Y||^2+2||Y||\,||R||+||R||^2) \leq L\, (1+||Y||+||Y||^2) \eeao
with $L=\max\{K^2||R||^2,2 K^2 ||R||, K^2\}$. Because of $||Y||\leq 1+ ||Y||^2$
\[ ||f(Y)-f(R)||^2 \leq {2} L \,(1+||Y||^2) \]
Hence, $Y\mapsto f(Y)-f(R)$ satisfies the linear growth condition, and so does $f:Y\mapsto f(Y)$.

Thus, if $T<\infty$, we know that $S_T$ hits the boundary of $\cals_p^+$ the first time, i.e. $S_T\in\ov{\cals_p^+}$, but $S_T\notin\cals_p^+$, and $S_t\in\cals_p^+$ for all $t<T$. Hence
\[ T=\inf\{u:\,S_u\notin\cals_p^+\} \]
or
\[  T=\inf\{s:\,\det(S_s)=0\} \]
\end{proof}

In other words: A unique strong solution $S$ of (\ref{eq_wp}) exists as long as $S$ stays in the interior of $\ov{\cals_p^+}$.\\ As a consequence, in order to show that there exists a unique strong solution $S$ of (\ref{eq_wp}) in $\cals_p^+$ on the entire interval $[0,\infty)$ we only need to show that $S_t\in\cals_p^+$ for all $t\in\bbr_+$.\\
Hence, the next step is to give sufficient conditions that guarantee that $S_t\in\cals_p^+$ for all $t\in\bbr_+$. First, we do this for the case with a zero drift, $K=0$. Later, we can generalize our results for $K\neq 0$ using a Girsanov transformation.

\begin{Theorem}
For every initial value $s_0\in\cals_p^+$ there exists a stopping time $T>0$ and a unique strong solution of the Wishart SDE (\ref{eq_wp}) on $[0,T)$. Suppose $T<\infty$. Then there exists a unique strong solution $(S_t)_{t\in[0,T]}$ of the Wishart SDE (\ref{eq_wp}) on $[0,T)$ such that $S_t\in\cals_p^+$ for all $t\in[0,T)$ and  $S_T\in\partial\cals_p^+=\ov{\cals_p^+}\backslash\cals_p^+$. Then, for every $x\in\bbr^p$ with $x^\rmt Q^\rmt Q x=1$ the process $(x^\rmt S_t x)_{t\in[0,T]}$ is a square Bessel process with parameter $\al$ and initial value $x^\rmt s_0 x$, $(x^\rmt S_t x)_{t\in[0,T]}\thicksim\rmbesq(\al,x^\rmt s_0 x)$.
Moreover, if $\al\geq 2$ then the process $(x^\rmt S_t x)_{t\in[0,T]}$ remains positive almost surely. If furthermore $Q\in GL(p)$ then:
\[ \textit{For every } y\in\bbr^p,y\neq 0, \textit{ it holds that } y^\rmt S_t y>0 \textit{ for all $t\in[0,T]$ a.s.} \]
\label{theorem_exwp2}\end{Theorem}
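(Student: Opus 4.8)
The plan is to push everything back to the one‑dimensional square Bessel results of Section~4.1 by applying the matrix It\^o formula to the scalar process $r_t:=x^\rmt S_t x$. The first sentence of the statement needs no new work: for $s_0\in\cals_p^+$, Theorem~\ref{theorem_exwp1} already gives a unique strong $\cals_p^+$-valued solution on $[0,T)$ with $T=\inf\{s:\det(S_s)=0\}>0$ and no explosion; continuity of $S$ and of $\det$ then force, whenever $T<\infty$, that $S_T:=\lim_{t\uparrow T}S_t$ exists, lies in $\ov{\cals_p^+}$ and satisfies $\det(S_T)=0$, i.e.\ $S_T\in\partial\cals_p^+=\ov{\cals_p^+}\backslash\cals_p^+$ and $S_t\in\cals_p^+$ for $t\in[0,T)$.

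For the square Bessel identification (using the standing hypothesis $K=0$ of this part of the section): the map $f(S)=x^\rmt Sx$ is linear, so $Df(S)=xx^\rmt$ and all second derivatives vanish, and Theorem~\ref{theorem_ito} gives $dr_t=\tr\bigl((xx^\rmt)^\rmt\,dS_t\bigr)=x^\rmt\,dS_t\,x$. Substituting the Wishart SDE, noting that the scalar $x^\rmt Q^\rmt\,dB_t^\rmt\sqrt{S_t}\,x$ equals its own transpose $x^\rmt\sqrt{S_t}\,dB_t\,Qx$ (since $\sqrt{S_t}$ is symmetric), and using the normalisation $x^\rmt Q^\rmt Qx=1$, one obtains $dr_t=2\,x^\rmt\sqrt{S_t}\,dB_t\,Qx+\al\,dt$. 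The martingale part is a continuous local martingale whose quadratic variation, computed directly or from Lemma~\ref{lemma_qvwp} (each of the four terms there contributes $(x^\rmt S_tx)(x^\rmt Q^\rmt Qx)$), equals $4(x^\rmt S_tx)(x^\rmt Q^\rmt Qx)\,dt=4r_t\,dt$; hence by L\'evy's Theorem~\ref{theorem_levy} (equivalently Lemma~\ref{lemma_trbm}) there is a one‑dimensional Brownian motion $\beta$ with $2\,x^\rmt\sqrt{S_t}\,dB_t\,Qx=2\sqrt{r_t}\,d\beta_t$. Thus $r$ solves (\ref{eq_besq}) with parameter $\al$ and initial value $x^\rmt s_0x$ on $[0,T]$; since $r_t\ge0$ throughout (because $S_t\in\ov{\cals_p^+}$) and pathwise uniqueness holds for (\ref{eq_besq}) by Theorem~\ref{theorem_expupbesq}, we conclude $(x^\rmt S_tx)_{t\in[0,T]}\thicksim\rmbesq(\al,x^\rmt s_0x)$.

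The positivity assertions are then immediate. For $\al\ge2$, Theorem~\ref{theorem_expupbesq} says a square Bessel process started at the strictly positive value $x^\rmt s_0x>0$ stays positive a.s.\ on $[0,\infty)$; extending $r$ past $T$ to a genuine $\rmbesq(\al,\cdot)$ process (driven by $\beta$, prolonged to a full Brownian motion if necessary) and using pathwise uniqueness, $r_t=x^\rmt S_tx>0$ for all $t\in[0,T]$ a.s. Finally, if $Q\in GL(p)$ then $Q^\rmt Q\in\cals_p^+$ by Theorem~\ref{spd}, so for any $y\ne0$ the vector $x:=y/\sqrt{y^\rmt Q^\rmt Qy}$ satisfies $x^\rmt Q^\rmt Qx=1$; applying the previous paragraph to this $x$ and multiplying by the positive scalar $y^\rmt Q^\rmt Qy$ gives $y^\rmt S_ty=(y^\rmt Q^\rmt Qy)(x^\rmt S_tx)>0$ for all $t\in[0,T]$ a.s.

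I expect the main obstacle to be the middle step: carrying out the It\^o computation cleanly and, above all, checking that the driving local martingale has quadratic variation exactly $4r_t\,dt$, so that the L\'evy/time‑change representation (Theorem~\ref{theorem_dambis}) genuinely yields a Brownian motion $\beta$; one must also be slightly careful that the \emph{strong solution / pathwise uniqueness} bookkeeping for (\ref{eq_besq}) still licenses both the distributional identification $\thicksim\rmbesq(\al,\cdot)$ and the a.s.\ positivity on the random interval $[0,T]$. The existence sentence and the final rescaling are routine.
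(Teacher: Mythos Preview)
Your proposal is correct and follows essentially the same approach as the paper: apply It\^o's formula to the linear functional $S\mapsto x^\rmt Sx$ (whose derivative is $xx^\rmt$ and second derivatives vanish), use Lemma~\ref{lemma_trbm} (or equivalently the quadratic variation computation via Lemma~\ref{lemma_qvwp} plus L\'evy) to identify the martingale part as $2\sqrt{x^\rmt S_tx}\,d\beta_t$, conclude the BESQ identification, and then invoke Theorem~\ref{theorem_expupbesq} for positivity and rescale for the final statement. Your extra care in extending $r$ past $T$ to justify positivity on the random interval is a slight refinement of the paper's argument, but not a different route.
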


\begin{proof}
We get the required solution $(S_t)_{t\in[0,T]}$ of (\ref{eq_wp}) by Theorem \ref{theorem_exwp1} if we attach the one point $S_T$ to the solution on $[0,T)$ of (\ref{eq_wp}).\\
Let $x\in\bbr^p$ be an arbitrary vector with $x^\rmt Q^\rmt Q x = 1$. The matrix of all partial derivatives of the map $\calm_p(\bbr)\ni S\mapsto x^\rmt S x\in\bbr$ is equal to
\[ D(x^\rmt S x)=(x_ix_j)_{i,j}=xx^\rmt \]
and hence all second derivatives are zero. \\
With Itô's formula (Theorem \ref{theorem_ito}) we get
\beao
d(x^\rmt S_t x) & = & \tr(xx^\rmt \,dS_t) \\
& = &  \tr(Qxx^\rmt \sqrt{S_t}\,dB_t)+\tr(\sqrt{S_t}xx^\rmt Q^\rmt \,dB_t^\rmt)+\tr(xx^\rmt \al Q^\rmt Q \,dt) \\
& = &  2\,\tr(Qxx^\rmt \sqrt{S_t}\,dB_t)+\tr(xx^\rmt \al Q^\rmt Q \,dt) \\
& = &  2 \sqrt{\tr(S_txx^\rmt Q^\rmt Q xx^\rmt)}\,d\beta_t+\al\tr(x^\rmt Q^\rmt Q x)\,dt \\
& = &  2 \sqrt{\tr(S_t x x^\rmt)}\,d\beta_t+\al\,dt \\
& = &  2 \sqrt{x^\rmt S_t x}\,d\beta_t+\al\,dt
\eeao
where we used Lemma \ref{lemma_trbm}. Hence, $(x^\rmt S_t x)_{t\in[0,T]} \thicksim \rmbesq(\al,x^\rmt s_0 x)$.\\
If $\al\geq2$ we know from Theorem \ref{theorem_expupbesq} that the process $(x^\rmt S_t x)_{t\in[0,T]}$ is strictly positive for all $t\in[0,T]$ a.s., because the initial value is positive, $x^\rmt s_0 x>0$.\\
Now suppose that $Q\in GL(p)$. Let $y\in\bbr^p,y\neq 0$, then for $x:=(y^\rmt Q^\rmt Q y)^{-\frac{1}{2}}y$ it holds that $x^\rmt Q^\rmt Q x = 1$ and by the above that $x^\rmt S_t x>0$ for all $t\in[0,T]$ a.s. Thus, we also have $y^\rmt S_t y>0$ for all $t\in[0,T]$ a.s.
\end{proof}

First it may sound surprisingly that for fixed $y\neq 0$, the process $(y^\rmt S_t y)_{t\in[0,T]}$ is almost surely positive at $T$, $y^\rmt S_T y > 0$ a.s., even though that there exists an $z\in\bbr^p,z\neq 0$, such that $z^\rmt S_T z=0$ a.s. But this just tells us, that it is `unlikely' to find such a vector $z$ in advance. Before we continue with the next theorem, we state

\begin{Theorem}
Let $S\thicksim\calwp_p(Q,0,\al,s_0)$. Then, for $\xi\neq 0$
\beam
& & d(\det(S_t))=2\det(S_t)\sqrt{\tr(Q^\rmt Q S_t^{-1})}d\beta_t+\det(S_t)(\al+1-p)\tr(Q^\rmt Q S_t^{-1})\,dt\label{eq_1}\\
& & d(\det(S_t)^\xi)=2\xi\det(S_t)^\xi\left[\sqrt{\tr(Q^\rmt Q S_t^{-1})}\,d\beta_t+\tr(Q^\rmt Q S_t^{-1})(\frac{\al-1-p}{2}+\xi)dt\right] \nonumber\\ \label{eq_2}\\
& & d(\ln(\det(S_t)))=2 \sqrt{\tr(Q^\rmt Q S_t^{-1})}\,d\beta_t+(\al-p-1)\tr(Q^\rmt Q S_t^{-1})\,dt \nonumber\\ 
& & d(\ln(\det(S_t)))=2\sqrt{\tr(Q^\rmt Q S_t^{-1})}\,d\beta_t \textit{ for }\al=p+1 \label{eq_4}
\eeam
for $t\in[0,T)$ with $T=\inf\{s:\,\det(S_s)=0\}$, where $\beta$ is a one-dimensional Brownian motion.
\end{Theorem}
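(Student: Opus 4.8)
The plan is to obtain (\ref{eq_1}) by a direct application of the matrix variate It\^o formula (Theorem \ref{theorem_ito}) to $f(S)=\det(S)$ on the open set $\cals_p^+$ --- this is legitimate on the stochastic interval $[0,T)$ because $S_t\in\cals_p^+$ there by Theorem \ref{theorem_exwp1} --- and then to derive (\ref{eq_2}), the general formula for $d(\ln\det(S_t))$, and the special case $\al=p+1$ from (\ref{eq_1}) by the ordinary one-dimensional It\^o formula applied to $x\mapsto x^\xi$ and $x\mapsto\ln x$.

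First I would assemble the ingredients. By Lemma \ref{lemma_calcrulesdet}(vi), since $S_t$ is symmetric, $Df(S_t)=\det(S_t)S_t^{-1}$ is symmetric, so $Df(S_t)^\rmt=\det(S_t)S_t^{-1}$; by Lemma \ref{lemma_calcrulesdet}(vii) the second derivatives are $\det(S_t)[(S_t^{-1})_{kl}(S_t^{-1})_{ij}-(S_t^{-1})_{ik}(S_t^{-1})_{lj}]$; and $d[S_{ij},S_{kl}]_t$ is given by Lemma \ref{lemma_qvwp}. In It\^o's formula the ``linear'' part is $\tr(\det(S_t)S_t^{-1}\,dS_t)$; with $K=0$ the $dt$-part of $dS_t$ contributes $\al\det(S_t)\tr(Q^\rmt Q S_t^{-1})\,dt$, while the martingale part $\det(S_t)\tr\big(S_t^{-1}(\sqrt{S_t}\,dB_t\,Q+Q^\rmt dB_t^\rmt\sqrt{S_t})\big)$ has its two summands equal after using cyclicity and transpose-invariance of the trace, hence equals $2\det(S_t)\tr(QS_t^{-1}\sqrt{S_t}\,dB_t)$; applying Lemma \ref{lemma_trbm} with $h(S)=QS^{-1}\sqrt{S}$, for which $\tr(h(S)^\rmt h(S))=\tr(Q^\rmt Q S^{-1})$, turns this into $2\det(S_t)\sqrt{\tr(Q^\rmt Q S_t^{-1})}\,d\beta_t$, the claimed diffusion coefficient.

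The main work is the It\^o correction $\tfrac12\sum_{i,j,k,l}\frac{\partial^2\det}{\partial S_{ij}\partial S_{kl}}(S_t)\,d[S_{ij},S_{kl}]_t$. Writing $A=S_t^{-1}$ and $C=Q^\rmt Q$, the tensor $d[S_{ij},S_{kl}]_t/dt=S_{t,ik}C_{jl}+S_{t,il}C_{jk}+S_{t,jk}C_{il}+S_{t,jl}C_{ik}$ is invariant under $i\leftrightarrow j$, under $k\leftrightarrow l$, and under $(ij)\leftrightarrow(kl)$, so one may replace the Hessian coefficient $A_{kl}A_{ij}-A_{ik}A_{lj}$ by its average over this group; the first term is already invariant and the second averages to $\tfrac12(A_{ik}A_{lj}+A_{il}A_{jk})$, whence the whole double sum collapses to $4\sum_{i,j,k,l}\big(A_{kl}A_{ij}-\tfrac12A_{ik}A_{lj}-\tfrac12A_{il}A_{jk}\big)S_{t,ik}C_{jl}$. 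Each of the three contractions is elementary via $AS_t=I_p$: the term $A_{kl}A_{ij}S_{t,ik}C_{jl}$ sums to $\tr(Q^\rmt Q S_t^{-1})$, likewise $A_{il}A_{jk}S_{t,ik}C_{jl}$ sums to $\tr(Q^\rmt Q S_t^{-1})$, while $A_{ik}A_{lj}S_{t,ik}C_{jl}$ sums to $p\,\tr(Q^\rmt Q S_t^{-1})$ because $\sum_i A_{ik}S_{t,ik}=1$ for each $k$. Hence the correction equals $(1-p)\det(S_t)\tr(Q^\rmt Q S_t^{-1})\,dt$, and adding the $\al$-drift gives the drift $\det(S_t)(\al+1-p)\tr(Q^\rmt Q S_t^{-1})\,dt$, i.e. (\ref{eq_1}).

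Finally, set $Y_t=\det(S_t)$ and $a_t=\tr(Q^\rmt Q S_t^{-1})$, so by (\ref{eq_1}), $dY_t=2Y_t\sqrt{a_t}\,d\beta_t+(\al+1-p)Y_t a_t\,dt$ and $d[Y,Y]_t=4Y_t^2a_t\,dt$. One-dimensional It\^o for $Y\mapsto Y^\xi$ gives $d(Y_t^\xi)=\xi Y_t^{\xi-1}dY_t+\tfrac12\xi(\xi-1)Y_t^{\xi-2}d[Y,Y]_t=2\xi Y_t^\xi\big[\sqrt{a_t}\,d\beta_t+(\tfrac{\al-1-p}{2}+\xi)a_t\,dt\big]$, which is (\ref{eq_2}); applying it to $Y\mapsto\ln Y$ gives $d(\ln Y_t)=Y_t^{-1}dY_t-\tfrac12Y_t^{-2}d[Y,Y]_t=2\sqrt{a_t}\,d\beta_t+(\al-p-1)a_t\,dt$, and for $\al=p+1$ the drift vanishes, yielding (\ref{eq_4}). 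The only genuine obstacle is the index bookkeeping in the correction term, which the symmetrization argument reduces to the three displayed contractions; everything else is routine.
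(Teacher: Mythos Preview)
Your proof is correct and follows essentially the same route as the paper: It\^o's formula on $\cals_p^+$ for $\det$ using Lemmas \ref{lemma_calcrulesdet}(vi),(vii), \ref{lemma_qvwp} and \ref{lemma_trbm} to obtain (\ref{eq_1}), then the scalar It\^o formula for $x\mapsto x^\xi$ and $x\mapsto\ln x$ to obtain (\ref{eq_2}) and (\ref{eq_4}). Your symmetrization argument for the second-order term is a more explicit treatment of the index contraction the paper simply records as $(1-p)\det(S_t)\tr(Q^\rmt Q S_t^{-1})\,dt$, but the method is the same.
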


These results can also be found in \citet[p. 747]{bru} without proof.

\begin{proof}
We consider the SDE
\[ dS_t=\sqrt{S_t}dB_tQ+Q^\rmt dB_t^\rmt \sqrt{S_t}+ \al Q^\rmt Q dt,\quad S_0=s_0 \]
First, we prove Equation (\ref{eq_1}): \\
According to Itô's formula, we have 
\[ d(\det(S_t))=\tr(D(\det(S_t))\,dS_t)+ \frac{1}{2} \sum_{i,j,k,l=1}^p \frac{\partial^2}{\partial S_{t,ij}\,\partial S_{t,kl}} \det(S_t) \, d[S_{ij},S_{kl}]_t \]
Using Lemma \ref{lemma_calcrulesdet} we get
\beao
\tr(D(\det(S))\,dS_t) & = &  \det(S_t)\tr(S_t^{-1}\,dS_t) \\
& = &  \det(S_t)\tr(S_t^{-1}(\sqrt{S_t}dB_tQ+Q^\rmt dB_t^\rmt \sqrt{S_t}+ \al Q^\rmt Q dt)) \\
& = &  \det(S_t)[\tr(Q S_t^{-\frac{1}{2}} \,dB_t)+\tr(S_t^{-\frac{1}{2}} Q^\rmt \,dB_t^\rmt)+\al\tr(Q^\rmt Q S_t^{-1})] \\
& = &  \det(S_t)[2\sqrt{\tr(Q^\rmt Q S_t^{-1})}\,d\beta_t+\al\tr(Q^\rmt Q S_t^{-1})]
\eeao
where we used Lemma \ref{lemma_trbm} in the last equation.
For the second order term, we get
\beao
& & \frac{1}{2} \sum_{i,j,k,l=1}^p \frac{\partial^2}{\partial S_{t,ij}\,\partial S_{t,kl}} \det(S_t) \, d[S_{ij},S_{kl}]_t \\
& = & \frac{1}{2} \sum_{i,j,k,l=1}^p \det(S)[(S_t^{-1})_{kl}(S_t^{-1})_{ij}-(S_t^{-1})_{ik}(S_t^{-1})_{lj}]d[S_{ij},S_{kl}]_t \\
& = &  \frac{1}{2} \sum_{i,j,k,l=1}^p \det(S)[(S_t^{-1})_{kl}(S_t^{-1})_{ij}-(S_t^{-1})_{ik}(S_t^{-1})_{lj}] [S_{t,ik} (Q^\rmt Q)_{jl} \,dt +  S_{t,il} (Q^\rmt Q)_{jk} \,dt \\
& &\hspace{8cm}+  S_{t,jk} (Q^\rmt Q)_{il} \,dt +  S_{t,jl} (Q^\rmt Q)_{ik} \,dt] \\
& = &  \det(S_t)[(1-p)\tr(Q Q^\rmt S_t^{-1})\,dt]
\eeao
where we used Lemma \ref{lemma_qvwp} and Lemma \ref{lemma_calcrulesdet}, again.
At all, we get equation (\ref{eq_1})
\[ d(\det(S_t))=2\det(S_t)\sqrt{\tr(Q^\rmt Q S_t^{-1})}\,d\beta_t+\det(S_t)(\al+1-p)\tr(Q^\rmt Q S_t^{-1})\,dt \]

For Equation (\ref{eq_2}), we observe that
\beam dX_t^\xi=\xi X_t^{\xi-1}\,dX_t+\frac{1}{2}\xi(\xi-1)X_t^{\xi-2}d[X,X]_t \label{eq_xi}\eeam
If we set 
\[X_t:=det(S_t)\]
then (\ref{eq_1}) is equal to
\beqq
dX_t=2X_t\left(\sqrt{\tr(Q^\rmt Q S_t^{-1})}\,d\beta_t+\frac{\al+1-p}{2}\tr(Q^\rmt Q S_t^{-1})\,dt\right)
\label{eqx}\eeqq
and
\[ d[X,X]_t=4X_t^2\tr(Q^\rmt Q S_t^{-1})\,dt \]
If we insert (\ref{eqx}) into (\ref{eq_xi}) we get (\ref{eq_2}):
\beao
dX_t^\xi & = & \xi X_t^{\xi-1}2 X_t\left(\sqrt{\tr(Q^\rmt Q S_t^{-1})}\,d\beta_t+\frac{\al+1-p}{2}\tr(Q^\rmt Q S_t^{-1})\,dt\right)\\
&+&\frac{1}{2}\xi(\xi-1)X_t^{\xi-2}4X_t^2\tr(Q^\rmt Q S_t^{-1})\,dt\\
& = & 2\xi X_t^{\xi}\left[\sqrt{\tr(Q^\rmt Q S_t^{-1})}\,d\beta_t+\frac{\al+1-p}{2}\tr(Q^\rmt Q S_t^{-1})\,dt+(\xi-1)\tr(Q^\rmt Q S_t^{-1})\,dt\right]\\
& = & 2\xi X_t^{\xi}\left[\sqrt{\tr(Q^\rmt Q S_t^{-1})}\,d\beta_t+\tr(Q^\rmt Q S_t^{-1})\left(\frac{\al-1-p}{2}+\xi\right)dt\right]
\eeao

To prove the last equation, observe that
\beam d(\ln(X_t))=X_t^{-1}\,dX_t-\frac{1}{2}X_t^{-2}\,d[X,X]_t \label{eq_ln}\eeam
and insert (\ref{eqx}) into (\ref{eq_ln}):
\beao
d(\ln(X_t)) & = & X_t^{-1}2X_t\left(\sqrt{\tr(Q^\rmt Q S_t^{-1})}\,d\beta_t+\frac{\al+1-p}{2}\tr(Q^\rmt Q S_t^{-1})\,dt\right)\\
& - & \frac{1}{2}X_t^{-2}4X_t^2\tr(Q^\rmt Q S_t^{-1})\,dt \\
& = & 2\left(\sqrt{\tr(Q^\rmt Q S_t^{-1})}\,d\beta_t+\frac{\al+1-p}{2}\tr(Q^\rmt Q S_t^{-1})\,dt\right)-2\tr(Q^\rmt Q S_t^{-1})\,dt \\
& = & 2 \sqrt{\tr(Q^\rmt Q S_t^{-1})}\,d\beta_t+(\al+1-p-2)\tr(Q^\rmt Q S_t^{-1})\,dt \\
& = & 2 \sqrt{\tr(Q^\rmt Q S_t^{-1})}\,d\beta_t+(\al-p-1)\tr(Q^\rmt Q S_t^{-1})\,dt
\eeao
which equals (\ref{eq_4}), if $\al=p+1$.
\end{proof}

\begin{Theorem}[Existence and Uniqueness of the Wishart Process II]\ \\
Let $K=0$, $s_0\in\cals_p^+$ and $\al \geq p+1$. Then there exists a unique strong solution in $\cals_p^+$ of the Wishart SDE (\ref{eq_wp}) on $[0,\infty)$.
\label{theorem_exwp3}\end{Theorem}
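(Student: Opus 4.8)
The plan is to reduce the whole statement to showing that the Wishart process started in $\cals_p^+$ never reaches $\partial\cals_p^+$, and then to use McKean's argument to establish this. By Theorem \ref{theorem_exwp1} (with $K=0$), for the initial value $s_0\in\cals_p^+$ there is a unique strong solution $S$ of the Wishart SDE (\ref{eq_wp}) taking values in $\cals_p^+$ on the stochastic interval $[0,T)$, where $T=\inf\{s:\det(S_s)=0\}>0$ a.s., and $S$ satisfies a linear growth bound so it cannot explode; hence on $\{T<\infty\}$ it actually hits $\partial\cals_p^+$. Thus it suffices to prove $T=\infty$ almost surely: then $S$ stays in $\cals_p^+\subseteq\ov{\cals_p^+}$ for all time and is, by Theorem \ref{theorem_exwp1}, the unique such strong solution on $[0,\infty)$.

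To show $T=\infty$ a.s.\ I would apply McKean's argument (Theorem \ref{theorem_mckean}) to the real-valued continuous process $r_t:=\det(S_t)$, which satisfies $P(r_0>0)=1$ because $\det(s_0)>0$, and for which $\tau_0:=\inf\{s:r_s=0\}$ is by definition the same stopping time as $T$. It remains to exhibit a function $h:\bbr_+\to\bbr$ with $h(r)$ a continuous local martingale on $[0,T)$ and $\lim_{x\to 0,\,x>0}h(x)=\pm\infty$; I would distinguish two cases according to the value of $\al$. If $\al=p+1$, take $h=\ln$: by equation (\ref{eq_4}) of the preceding theorem, $h(r_t)=\ln(\det(S_t))=\ln(\det(s_0))+2\int_0^t\sqrt{\tr(Q^\rmt Q S_s^{-1})}\,d\beta_s$ on $[0,T)$, and since $t\mapsto S_t^{-1}$ is continuous (hence $\tr(Q^\rmt Q S_t^{-1})$ is locally bounded) on $[0,T)$ this stochastic integral is a genuine continuous local martingale there; as $\lim_{x\to 0,\,x>0}\ln x=-\infty$, McKean's argument gives $T=\tau_0=\infty$ a.s. If $\al>p+1$, set $\xi:=\tfrac{p+1-\al}{2}<0$; then the drift coefficient $\tfrac{\al-1-p}{2}+\xi$ in equation (\ref{eq_2}) vanishes, so $h(r_t)=\det(S_t)^{\xi}$ satisfies $d(\det(S_t)^{\xi})=2\xi\det(S_t)^{\xi}\sqrt{\tr(Q^\rmt Q S_t^{-1})}\,d\beta_t$, again a continuous local martingale on $[0,T)$ by the same local-boundedness remark, and $\lim_{x\to 0,\,x>0}x^{\xi}=+\infty$ since $\xi<0$; McKean's argument again yields $T=\infty$ a.s. In both cases $T=\infty$ a.s., which is what was needed.

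The hard part — and the place where the hypothesis $\al\geq p+1$ is actually used — is the choice of the auxiliary function $h$: one needs the \emph{drift} of $h(\det(S_t))$ in the It\^o expansions (\ref{eq_1})–(\ref{eq_4}) to vanish, so that $h(\det(S))$ is a local martingale, and simultaneously $h$ to blow up at $0$. For $\al>p+1$ the distinguished exponent $\xi=\tfrac{p+1-\al}{2}$ is strictly negative, which both kills the drift and forces $h(x)=x^{\xi}\to\infty$ as $x\downarrow 0$; the borderline case $\al=p+1$ degenerates ($\xi=0$) and must instead be handled with $h=\ln$, using that the log-drift $\al-p-1$ vanishes there. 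Everything else — the reduction to $\{T=\infty\}$, the identification $\tau_0=T$, verifying the local-martingale property from continuity of $S^{-1}$ on $[0,T)$, and reading off uniqueness and global existence from Theorem \ref{theorem_exwp1} — is routine once the determinant formulae (\ref{eq_1})–(\ref{eq_4}) are available.
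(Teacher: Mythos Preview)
Your proposal is correct and follows essentially the same approach as the paper: reduce to showing $T=\inf\{s:\det(S_s)=0\}=\infty$ via Theorem~\ref{theorem_exwp1}, then apply McKean's argument to $r_t=\det(S_t)$ with $h=\ln$ when $\al=p+1$ (using (\ref{eq_4})) and $h(x)=x^{\xi}$, $\xi=\tfrac{p+1-\al}{2}<0$, when $\al>p+1$ (using (\ref{eq_2})). The only difference is cosmetic: where you invoke continuity of $S^{-1}$ on $[0,T)$ to get the local-martingale property, the paper spells out an explicit localizing sequence $T_n=\inf\{t:\|S_t^{-1}\|=n\}$ (respectively $T_n=\inf\{t:\|S_t^{-1}\|=n\}\wedge\inf\{t:\det(S_t^{-1})\geq n\}$) and bounds the expected quadratic variation directly.
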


\begin{proof}
We consider the SDE
\[ dS_t=\sqrt{S_t}dB_tQ+Q^\rmt dB_t^\rmt \sqrt{S_t}+ \al Q^\rmt Q dt,\quad S_0=s_0 \]
and show that
\[ T=\inf\{s:\,\det(S_s)=0\}=\infty \]
where we adopt the idea from \citet[p.734]{bru} to use McKean's argument.
As a matrix norm we choose 
\beqq ||A||:=\max_{k=1:p}\sum_{j=1}^p|A_{jk}| \label{matrixnorm}\eeqq
\vspace{0.5cm}Observe that then $|\tr(A)|\leq p||A||$ for every matrix $A\in\calm_p(\bbr)$.\\
Let us assume that $T<\infty$.\\

First consider the case $\al=p+1$. Then we have from (\ref{eq_4})
\[ d(\ln(\det(S_t)))=2\sqrt{\tr(Q^\rmt Q S_t^{-1})}\,d\beta_t \]
We can define an increasing sequence of stopping times $(T_n)_{n\in\bbn}$ with
\[ T_n:=\inf\{t\in\bbr_+:\,||S_t^{-1}||=n\}, \]
where $T_n<\infty$ because $T<\infty$, that converges to $T$ such that $\ln(\det(S_{\min\{t,T_n\}}))$ is a martingale:
\beao
E\left(\int_0^{T_n}\left(2\sqrt{\tr(Q^\rmt Q S_t^{-1})}\right)^2\,dt\right) & = & 4 E\left(\int_0^{T_n}\tr(Q^\rmt Q S_t^{-1})\,dt\right) \\
& \leq & 4 E\left(\int_0^{T_n} p ||Q^\rmt Q|| n \,dt\right) \\
& = & 4 T_n p ||Q^\rmt Q|| n < \infty
\eeao
where we used that
\beao 0\leq \tr(Q^\rmt Q S_t^{-1})\leq p ||Q^\rmt Q S_t^{-1}||\leq p ||Q^\rmt Q|| \, ||S_t^{-1}|| \leq p ||Q^\rmt Q|| n \eeao
for $t\in [0,T_n)$. That means by definition that $\ln(\det(S_t))$ is a local martingale on $[0,T)$.\\
Now we can apply McKean's argument, Theorem \ref{theorem_mckean}, with $r_t=\det(S_t)$ and $h\equiv ln$. By assumption we know $\det(S_0)>0$ and we have shown above that $h(r_t)=\ln(\det(S_t))$ is a local martingale on $[0,T)$. Obviously, $\ln(\det(S_t))$ converges to -$\infty$ for $t\rightarrow T$ and hence McKean's argument implies $T=\infty$. That is a contradiction to our assumption. Logically consistent, we can conclude $T=\infty$.

In the case $\al>p+1$, we set $\xi=\frac{p+1-\al}{2}<0$ and
\[ T_n:=\inf\{t\in\bbr_+:\,||S_t^{-1}||=n\} \wedge \inf\{t\in\bbr_+:\,\det(S_t^{-1})\geq n\} \]
where $a\wedge b:=\inf\{a,b\}$. Again, $(T_n)_{n\in\bbn}$ is a sequence of stopping times that converges to $T$. From (\ref{eq_2}) we know that
\[ d(\det(S_t)^\xi)=2\xi\det(S_t)^\xi \sqrt{\tr(Q^\rmt Q S_t^{-1})}\,d\beta_t \]
We show again that $\det(S_{t\wedge T_n})^\xi$ is a martingale for every $n\in\bbn$:
\beao
E\left(\int_0^{T_n}\left(2\xi\det(S_t)^\xi\sqrt{\tr(Q^\rmt Q S_t^{-1})}\right)^2\,dt\right)
& = & 4\xi^2 E\left(\int_0^{T_n} \det(S_t)^{2\xi} \tr(Q^\rmt Q S_t^{-1})\,dt\right) \\
& \leq & 4\xi^2 E\left(\int_0^{T_n} n^{-2\xi} p ||Q^\rmt Q|| n \,dt\right) \\
& = & 4\xi^2 T_n p ||Q^\rmt Q|| n^{1-2\xi} < \infty
\eeao
Hence, we can apply McKean's argument for the local martingale $\det(S_t)^\xi$ on $[0,T)$, because $\det(S_t)^\xi$ converges to infinity for $t\rightarrow T$. The same reasoning as above implies the contradiction $T=\infty$.\\
Finally, Theorem \ref{theorem_exwp1} proves the statement.\\
Observe that, for $\al<p+1$, we have $\xi>0$ and thus $\det(S_t)^\xi$ does converge to zero, so we cannot apply McKean's argument in this case.
\end{proof}

Eventually, we are able to state the final theorem about the existence and uniqueness of the Wishart process, which is the main achievement of this section.

\begin{Theorem}[Existence and Uniqueness of the Wishart Process III]\ \\
Let $Q\in GL(p)$, $K\in\calm_p(\bbr)$, $s_0\in\cals_p^+$ and the parameter $\al\geq p+1$. For $\wh{B}\thicksim\calbm_p$ consider the stochastic differential equation
\beqq
d\wh{S_t}=\sqrt{\wh{S_t}}\,d\wh{B_t}\,Q+Q^\rmt d\wh{B_t}^\rmt \sqrt{\wh{S_t}}+ \al\, Q^\rmt Q \,dt ,\quad S_0=s_0
\label{wo_drift}\eeqq
From Theorem \ref{theorem_exwp3} we know that there exists an unique strong solution $(\wh{S},\wh{B})$ of (\ref{wo_drift}) on $[0,T^{\wh{S}})$ with $T^{\wh{S}}=\inf\{s:\det(\wh{S_s})=0\}=\infty$. Define the process
\[ U_t := -\sqrt{\wh{S_t}}KQ^{-1} \]
and suppose that
\beqq \left(\mathcal{E}\left(\tr\left(-\int_0^t U_s^\rmt\,dB_s\right)\right)\right)_{t\in[0,\infty)} \label{girsanov_martingale2}\eeqq
is a martingale .\\
Then there exists an unique strong solution $(S,B)=((S_t)_{t\in\bbr_+},(B_t)_{t\in\bbr_+})$, $B\thicksim\calbm_p$, in the cone of all positive definite matrices $\cals_p^+$ of the Wishart SDE
\beqq
dS_t=\sqrt{S_t}\,dB_t\,Q+Q^\rmt dB_t^\rmt \sqrt{S_t}+(S_t K+K^\rmt S_t+ \al \, Q^\rmt Q)\,dt,\quad S_0=s_0
\label{wishartsde}\eeqq
on the entire interval $[0,\infty)$.
\label{theorem_exwp}
\end{Theorem}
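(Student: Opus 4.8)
The plan is to obtain the Wishart process with drift from the driftless one of Theorem \ref{theorem_exwp3} by a change of measure. Since $Q\in GL(p)$, that theorem already provides a unique strong solution $(\wh{S},\wh{B})$ of (\ref{wo_drift}) on all of $[0,\infty)$ with $\wh{S_t}\in\cals_p^+$ for every $t$, i.e. $T^{\wh{S}}=\inf\{s:\det(\wh{S_s})=0\}=\infty$ a.s. Comparing (\ref{wo_drift}) with (\ref{wishartsde}), the only term missing from the former is the drift $\wh{S_t}K+K^\rmt\wh{S_t}$, and the key observation is that, because $Q$ is invertible and $\sqrt{\wh{S_t}}\sqrt{\wh{S_t}}=\wh{S_t}$, this term is produced by shifting the driving Brownian motion: with $V_t:=\sqrt{\wh{S_t}}KQ^{-1}=-U_t$ one has
\[
\sqrt{\wh{S_t}}\,V_t\,Q+Q^\rmt V_t^\rmt\sqrt{\wh{S_t}}=\sqrt{\wh{S_t}}\sqrt{\wh{S_t}}KQ^{-1}Q+Q^\rmt(Q^{-1})^\rmt K^\rmt\sqrt{\wh{S_t}}\sqrt{\wh{S_t}}=\wh{S_t}K+K^\rmt\wh{S_t}.
\]

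First I would check that $U_t=-\sqrt{\wh{S_t}}KQ^{-1}$ is an adapted, continuous $\calm_p(\bbr)$-valued process, so that the matrix variate Girsanov theorem (Theorem \ref{girsanov}), applied with driving Brownian motion $\wh{B}$ and process $U$, can be used --- the assumed martingale property of (\ref{girsanov_martingale2}) being exactly its hypothesis. On each horizon $[0,T]$ this yields an equivalent measure $\wh{Q}$ under which
\[
B_t:=\wh{B_t}+\int_0^t\sqrt{\wh{S_s}}KQ^{-1}\,ds=\wh{B_t}-\int_0^t U_s\,ds
\]
is a Brownian motion adapted to the ambient filtration. Substituting $d\wh{B_t}=dB_t+\sqrt{\wh{S_t}}KQ^{-1}\,dt$ into (\ref{wo_drift}) and using the identity above shows that $\wh{S}$ solves (\ref{wishartsde}) with driving Brownian motion $B$ under $\wh{Q}$; hence $(\wh{S},B)$ is a weak solution of the full Wishart SDE. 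Since $\wh{Q}$ is equivalent to $P$ and $\det(\wh{S_t})>0$ for all $t$ holds $P$-a.s. by Theorem \ref{theorem_exwp3}, the same holds $\wh{Q}$-a.s., so this weak solution stays in $\cals_p^+$, never reaches $\partial\cals_p^+$ and lives on all of $[0,\infty)$.

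It then remains to upgrade this weak solution to a strong one and to obtain uniqueness. The proof of Theorem \ref{theorem_exwp1} shows that the coefficients of the Wishart SDE (\ref{wishartsde}), viewed as an SDE on the open set $\cals_p^+$, are locally Lipschitz, so by Theorem \ref{theorem_exsde} pathwise uniqueness holds for (\ref{wishartsde}) in $\cals_p^+$ up to the boundary hitting time $T=\inf\{s:\det(S_s)=0\}$. By a Yamada--Watanabe-type argument (cf. Remark \ref{remark_pu}(1)--(3)) a solution for which $T=\infty$ a.s. is then automatically adapted to the completed filtration generated by its Brownian motion, hence strong, and there is at most one such solution; applied to $(\wh{S},B)$, for which $T=\infty$ a.s. by the previous step, this produces the asserted unique strong $\cals_p^+$-valued solution $(S,B):=(\wh{S},B)$ of (\ref{wishartsde}) on $[0,\infty)$. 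The passage from the finite-horizon Girsanov theorem to $[0,\infty)$ is harmless: the martingale property in (\ref{girsanov_martingale2}) is assumed on all of $[0,\infty)$, so the measures $\wh{Q}$ on the $\calf_T$ are consistent and the unique strong solutions on the intervals $[0,T]$ patch together.

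The matrix computations (the drift identity and the substitution) are routine once $Q\in GL(p)$, and the genuinely hard work --- global existence of a solution with values in $\cals_p^+$ --- was already done in Theorem \ref{theorem_exwp3} via McKean's argument. The one delicate point is conceptual rather than computational: the weak-to-strong upgrade must be carried out on the \emph{open} domain $\cals_p^+$ and not on $\calm_p(\bbr)$, and it is precisely the hypothesis $\al\geq p+1$ --- through Theorem \ref{theorem_exwp3} --- that keeps the constructed weak solution away from $\partial\cals_p^+$ and thereby makes that upgrade legitimate; this, together with keeping the sign of $U_t$ and the direction of the Brownian shift straight, is what I would expect to be the main thing to get right.
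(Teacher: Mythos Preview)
Your approach is essentially the paper's: apply Girsanov with $U_t=-\sqrt{\wh{S_t}}KQ^{-1}$ to the driftless solution $\wh{S}$ from Theorem \ref{theorem_exwp3}, recognise that $\wh{S}$ then solves the full Wishart SDE with respect to the shifted Brownian motion, and close with the pathwise uniqueness coming from Theorem \ref{theorem_exwp1}.

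There is one slip, ironically the very one you flagged: your definition $B_t=\wh{B_t}+\int_0^t\sqrt{\wh{S_s}}KQ^{-1}\,ds=\wh{B_t}-\int_0^t U_s\,ds$ has the wrong sign and is inconsistent with your own substitution $d\wh{B_t}=dB_t+\sqrt{\wh{S_t}}KQ^{-1}\,dt$ (which \emph{is} the right one for producing the drift $\wh{S_t}K+K^\rmt\wh{S_t}$). The correct shift, matching both formula (\ref{eq_whb}) of the paper's Girsanov theorem and your substitution, is $B_t=\wh{B_t}+\int_0^t U_s\,ds=\wh{B_t}-\int_0^t\sqrt{\wh{S_s}}KQ^{-1}\,ds$.

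The only other difference is cosmetic and concerns the last step. You upgrade the weak solution $(\wh{S},B)$ to a strong one under $\wh{Q}$ directly via Yamada--Watanabe; the paper instead invokes that pathwise uniqueness implies uniqueness in law, and uses this to transfer $T^{\wh{S}}=\infty$ (which holds $\wh{Q}$-a.s. by equivalence of measures) to the hitting time $T^S$ of the strong solution $S$ already furnished by Theorem \ref{theorem_exwp1} on the original space. Both routes are valid and amount to the same thing.
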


\begin{proof}
In this proof, with solution we always mean an $\cals_p^+$-valued solution.\\

With Girsanov's Theorem (Theorem \ref{girsanov}) we are able to conclude that
\beqq B_t := \int_0^t U_s\,dt + \wh{B_t}=-\int_0^t \sqrt{\wh{S_s}}KQ^{-1} \,dt + \wh{B_t} \label{eq_bhat}\eeqq
defines a Brownian motion w.r.t. the equivalentprobability measure $\wh{Q}$ as defined in (\ref{eq_q}).\\

Some calculation shows that
\beam
d\wh{S_t} & = & \sqrt{\wh{S_t}}\,d\wh{B_t}\,Q+Q^\rmt d\wh{B_t}^\rmt \sqrt{\wh{S_t}}+ \al Q^\rmt Q \,dt \nonumber\\
& = &  \sqrt{\wh{S_t}}\,(dB_t+\sqrt{\wh{S_t}}KQ^{-1}\,dt)\,Q + Q^\rmt d\wh{B_t}^\rmt \sqrt{\wh{S_t}}+ \al Q^\rmt Q \,dt \nonumber\\
& = &  \sqrt{\wh{S_t}}\,dB_t\,Q + Q^\rmt d\wh{B_t}^\rmt \sqrt{\wh{S_t}}+ \wh{S_t}K\,dt + \al Q^\rmt Q \,dt \nonumber\\
& = &  \sqrt{\wh{S_t}}\,dB_t\,Q+Q^\rmt dB_t^\rmt \sqrt{\wh{S_t}}+(\wh{S_t} K+K^\rmt \wh{S_t}+ \al Q^\rmt Q)\,dt 
\label{wishartsde2}\eeam
Hence $(\wh{S},B)$ is a solution of (\ref{wishartsde}) on $[0,\infty)$.\\

From Theorem \ref{theorem_exwp1} we know that there exists a unique strong solution $S$ of (\ref{wishartsde2}) on the interval $[0,T^S)$ with $T^S=\inf\{s:\,\det(S_s)=0\}>0$. 
The pathwise uniqueness implies uniqueness in law, thus $S$ and $\wh{S}$ have the same distribution, and so have $T^S$ and $T^{\wh{S}}$. Hence, $T^S=T^{\wh{S}}=\infty$ and $(S,B)$ is an unique strong solution $S$ of (\ref{wishartsde}) on the interval $[0,\infty)$.
\end{proof}

\begin{Remark}
In the case that the matrices $Q^\rmt Q$ and $K$ commute, \citet[p. 748]{bru} has shown that (\ref{girsanov_martingale2}) is a martingale by extending the methods of \citet{pitman_jor} for the one-dimensional case. 
\end{Remark}

\begin{Assumption}
For the rest of this thesis, we will assume that (\ref{girsanov_martingale2}) is a martingale.
\end{Assumption}

Before we end this chapter, we want to compare our results to the one stated in \citet{bru}:

\begin{Theorem}(Cf. \citet[Theorem 2'']{bru})
If $\al\in \Delta_p := \{1,\ldots,p-1\}\cup (p-1,+ \infty)$, $A\in GL(p)$, $B\in \cals_p^{-}$, $s_0\in\cals_p^{+}$ and has all its Eigenvalues distinct, and $M$ is a $p\times p$-dimensional Brownian motion, then the stochastic differential equation
\beqq
dS_t = S_t^{\frac{1}{2}}\,dM_t\,(A^\rmt A)^{\frac{1}{2}} + (A^\rmt A)^{\frac{1}{2}}\,dM_t^\rmt\,S_t^{\frac{1}{2}} + (BS_t + S_tB)\,dt + \al A^\rmt A\,dt, S_0=s_0
\label{sde_bru}\eeqq
has a unique solution on $[0,\tau)$ if $B$ and $(A^\rmt A)^{\frac{1}{2}}$ commute, whereas $\tau$ denotes the first time of collision, i.e. the first time that two eigenvalues of $S$ become equal. With the term unique solution is meant a weak solution that is unique in law.
\label{exwp_bru}\end{Theorem}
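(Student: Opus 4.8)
The plan is to follow \citet[Theorem 2'']{bru}: first strip off the drift matrix $B$ by a change of measure, then analyse the process of eigenvalues of $S$, which is a diffusion with a repulsive drift that is regular precisely away from the collision set.

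\emph{Step 1 (removing $B$).} Since $B\in\cals_p^-$ is symmetric, $BS_t+S_tB$ has the form $S_tK+K^\rmt S_t$ with $K=B$, and $(A^\rmt A)^{1/2}$ plays the role of $Q$; the hypothesis that $B$ commutes with $(A^\rmt A)^{1/2}$ means $Q^\rmt Q=A^\rmt A$ commutes with $K$, so one is in the setting of Theorem \ref{theorem_exwp}. Exactly as in the Remark following that theorem, the commutativity is what permits one to verify that the Girsanov exponential of Theorem \ref{girsanov} is a true martingale, and the corresponding change of measure turns $M$ into a new Brownian motion under which $S$ solves
\[ dS_t = S_t^{1/2}\,dM_t\,(A^\rmt A)^{1/2} + (A^\rmt A)^{1/2}\,dM_t^\rmt\,S_t^{1/2} + \al\,A^\rmt A\,dt . \]
It therefore suffices to treat this driftless equation, and, conjugating by the deterministic matrix $(A^\rmt A)^{-1/2}$ and invoking L\'evy's Theorem \ref{theorem_levy} to recognise the rotated noise as a Brownian motion, one may further normalise $A^\rmt A = I_p$ when convenient, keeping in mind that this conjugation is not orthogonal and hence relabels the collision time.

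\emph{Step 2 (the eigenvalue system).} On the open subset $\mathcal{O}\subset\cals_p^+$ of matrices with $p$ distinct positive eigenvalues, the ordered eigenvalues and a smooth local eigenvector frame are analytic functions of the entries. Writing $S_t=O_t\Lambda_tO_t^\rmt$ with $\Lambda_t=\diag(\lambda_1(t),\dots,\lambda_p(t))$, $\lambda_1>\dots>\lambda_p>0$, and applying the matrix It\^o formula (Theorem \ref{theorem_ito} and its corollary for stochastic intervals, together with Lemma \ref{lemma_trbm}) yields a closed system of stochastic differential equations for $(\Lambda_t,O_t)$ whose coefficients are smooth on $\mathcal{O}$. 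In the normalised case $A^\rmt A=I_p$ the eigenvalue part decouples into the Laguerre-type system
\[ d\lambda_i = 2\sqrt{\lambda_i}\,d\beta_i + \Big(\al + \sum_{j\neq i}\frac{\lambda_i+\lambda_j}{\lambda_i-\lambda_j}\Big)dt , \]
with independent one-dimensional Brownian motions $\beta_1,\dots,\beta_p$; the drift is smooth, hence locally Lipschitz, on the Weyl chamber $C=\{\lambda_1>\dots>\lambda_p>0\}$ but singular on $\partial C$, both where two eigenvalues meet and on the face $\{\lambda_p=0\}$.

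\emph{Step 3 (existence and uniqueness up to $\tau$).} Theorem \ref{theorem_exsde} applied on the open set $C$ (respectively on $\mathcal{O}$ for the full $(\Lambda,O)$ system) gives a unique strong solution, and hence a unique $S_t=O_t\Lambda_tO_t^\rmt$, up to the first exit time from $C$. The remaining point is that this exit is caused by a collision of two eigenvalues and not by $\lambda_p$ reaching $0$: the repulsion term pushes the $\lambda_i$ apart, while the smallest eigenvalue is dominated, in the sense of the comparison theorem underlying Theorem \ref{theorem_expupbesq}, by a squared Bessel process of dimension $\al-(p-1)$; when $\al\in\Delta_p$ --- exactly the range for which the Wishart law is well defined, cf. Remark \ref{centralwishart} and \citet{olkin} --- this comparison (and, for the integers $\al\in\{1,\dots,p-1\}$, the explicit construction $S_t=X_tX_t^\rmt$ with $X$ an Ornstein-Uhlenbeck/Gaussian process as in Lemma \ref{lemma_wd} and Section 3.5) shows that $\{\lambda_p=0\}$ is not hit before the first collision, so the exit time is precisely $\tau$. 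For uniqueness in law, any weak solution on $[0,\tau)$ starting from distinct eigenvalues has its eigenvalue process solving the same locally Lipschitz system on $C$, where pathwise uniqueness holds; this fixes the law of $(\Lambda_t)$, then the conditional law of the frame $O_t$ given $\Lambda$ and the driving noise, and therefore the law of $S$.

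\emph{Main obstacle.} The crux is the boundary analysis in Step 3 --- proving that the eigenvalue diffusion leaves the Weyl chamber only through a collision face and never through $\{\lambda_p=0\}$, and that this holds precisely for $\al\in\Delta_p$; this needs the delicate squared-Bessel comparison for the smallest eigenvalue and, at the borderline integer values $\al\le p-1$, the separate rank-degenerate Gaussian construction. A secondary technical difficulty is controlling the eigenvector process right up to $\tau$, where the It\^o decomposition of $O_t$ degenerates. By contrast, the reduction in Step 1 and the It\^o computation producing the eigenvalue system in Step 2 are routine given the machinery already assembled in Chapters 2 and 3.
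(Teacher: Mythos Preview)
The paper does not prove this theorem at all: it is stated verbatim from \citet[Theorem 2'']{bru} purely for comparison with the paper's own existence results (Theorems \ref{theorem_exwp1}--\ref{theorem_exwp}), and no argument is given. So there is no ``paper's proof'' to compare against; your proposal is effectively a sketch of Bru's original argument, which the paper never reproduces.

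That said, your outline is broadly faithful to Bru's strategy (Girsanov reduction, then the eigenvalue diffusion on the Weyl chamber), but two points in Step~3 are loose. First, the phrasing ``dominated by a squared Bessel process of dimension $\al-(p-1)$'' is backwards: to keep $\lambda_p$ away from $0$ you need a lower comparison, and the actual drift of $\lambda_p$ satisfies $\al+\sum_{j\neq p}\frac{\lambda_p+\lambda_j}{\lambda_p-\lambda_j}\geq \al-(p-1)$ only in the limit $\lambda_p\to 0$, so the comparison is asymptotic rather than a clean pathwise bound; Bru handles this more carefully. Second, and more seriously, for the integer values $\al\in\{1,\dots,p-1\}$ your suggested Gaussian construction $S_t=X_t^\rmt X_t$ with $X$ of size $\al\times p$ produces a matrix of rank at most $\al<p$, hence never in $\cals_p^+$ --- this is incompatible with the hypothesis $s_0\in\cals_p^+$. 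In Bru's treatment the integer-$\al$ case is genuinely different (the process lives on a lower-rank stratum and the ``distinct eigenvalues'' condition refers to the nonzero ones), so invoking the Ornstein--Uhlenbeck square here does not close the gap for a positive-definite start. The conjugation remark in Step~1 is also imprecise: replacing $S_t$ by $(A^\rmt A)^{-1/2}S_t(A^\rmt A)^{-1/2}$ changes the eigenvalues and hence the collision time, so one cannot freely normalise $A^\rmt A=I_p$ without tracking this.
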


Compared to (\ref{sde_bru}), our definition of the Wishart SDE (\ref{eq_wp}) is more general, as we allow the drift matrix $K$ to be an arbitrary matrix, whereas in (\ref{sde_bru}) the matrix $B$ has to be symmetric negative definite.
In the case $\al\in[p+1,\infty)$, the result in Theorem \ref{theorem_exwp} extends the one in Theorem \ref{exwp_bru} because we prove the existence of a strong solution, that is unique up to indistinguishability and takes almost surely values in the cone of all symmetric positive definite matrices. Furthermore, this solution has infinite lifetime independently of any collision of the eigenvalues of our solution.

%%%%%%%%%%%%%%%%%%%%%%%%%%%%%%%%%%%%%%%%%%%%%%%%%%
\section{Square Ornstein-Uhlenbeck Processes and their Distributions}

\begin{Theorem}\ \\
Let $n\in\{p+1,p+2,\ldots\}$, $A\in GL(p)$, $B\in\calm_p(\bbr)$, $s_0\in\cals_p^+$ and $X\thicksim\caloup_{n,p}(A,B,x_0)$ be an Ornstein-Uhlenbeck process with $x_0^\rmt x_0=s_0$. Then there exists a Brownian motion $M\thicksim\calbm_p$ such that the unique strong solution $(S,M)$ in $\cals_p^+$ of the stochastic differential equation
\beqq
dS_t = S_t^{\frac{1}{2}}\,dM_t\,(A^\rmt A)^{\frac{1}{2}} + (A^\rmt A)^{\frac{1}{2}}\,dM_t^\rmt\,S_t^{\frac{1}{2}} + (B^\rmt S_t + S_tB)\,dt + n A^\rmt A\,dt, \quad S_0=s_0
\label{eq_soup}\eeqq
is given by the square Ornstein-Uhlenbeck process $S=X^\rmt X=(X_t^\rmt X_t)_{t\in\bbr_+}$.
\label{soup}\end{Theorem}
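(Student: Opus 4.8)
The plan is to compute $d(X_t^\rmt X_t)$ directly with the matrix-variate partial integration formula, read off the drift, and then build the driving Brownian motion $M$ out of the given Brownian motion $W$ by a square-root normalisation, checking it is a Brownian motion with L\'evy's Theorem; the existence and uniqueness results of Section 4.2 then identify $X^\rmt X$ with the unique strong solution.

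First I would write $X\thicksim\caloup_{n,p}(A,B,x_0)$ in the form $dX_t = X_t B\,dt + dW_t\,A$, so that $dX_t^\rmt = B^\rmt X_t^\rmt\,dt + A^\rmt\,dW_t^\rmt$. Applying Theorem \ref{partialintegration} to $S_t := X_t^\rmt X_t$ gives
\[ dS_t = (dX_t^\rmt)X_t + X_t^\rmt(dX_t) + d[X^\rmt,X]_t^M . \]
The two middle terms contribute $(B^\rmt S_t + S_t B)\,dt$ together with the martingale part $X_t^\rmt\,dW_t\,A + A^\rmt\,dW_t^\rmt X_t$. For the bracket, only the martingale part $\int_0^\cdot dW_s\,A$ of $X$ enters, and using $[W_{ab},W_{cd}]_t = t\,\mathbf{1}_{\{a=c\}}\mathbf{1}_{\{b=d\}}$ one finds $[X^\rmt,X]_{t,ij}^M = \sum_{k=1}^n [X_{ki},X_{kj}]_t = n\,t\,(A^\rmt A)_{ij}$, i.e. $d[X^\rmt,X]_t^M = n\,A^\rmt A\,dt$. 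Hence
\[ dS_t = (B^\rmt S_t + S_t B)\,dt + n\,A^\rmt A\,dt + \big(X_t^\rmt\,dW_t\,A + A^\rmt\,dW_t^\rmt X_t\big), \]
whose drift already matches that of (\ref{eq_soup}).

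Next I would manufacture $M$. Since $A\in GL(p)$ we have $A^\rmt A\in\cals_p^+$ and $\sqrt{A^\rmt A}$ is invertible; on the stochastic interval $[0,T)$ with $T := \inf\{t:\det(S_t)=0\}$ the matrix $S_t = X_t^\rmt X_t$ lies in $\cals_p^+$, so $\sqrt{S_t}$ is invertible there. Put
\[ M_t := \int_0^t (\sqrt{S_s})^{-1}\,X_s^\rmt\,dW_s\,A\,(\sqrt{A^\rmt A})^{-1}, \qquad t\in[0,T). \]
Writing $P_t := (\sqrt{S_t})^{-1}X_t^\rmt$ and $R := A(\sqrt{A^\rmt A})^{-1}$, a short computation gives $d[M_{ij},M_{kl}]_t = (P_tP_t^\rmt)_{ik}(R^\rmt R)_{jl}\,dt$, and since $P_tP_t^\rmt = (\sqrt{S_t})^{-1}(X_t^\rmt X_t)(\sqrt{S_t})^{-1} = I_p$ and $R^\rmt R = I_p$, this is $\mathbf{1}_{\{i=k\}}\mathbf{1}_{\{j=l\}}\,dt$; by Theorem \ref{theorem_levy}, $M$ is a $p\times p$ Brownian motion on $[0,T)$. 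Cancelling square roots gives $\sqrt{S_t}\,dM_t\,\sqrt{A^\rmt A} = X_t^\rmt\,dW_t\,A$, and transposing (both $\sqrt{S_t}$ and $\sqrt{A^\rmt A}$ being symmetric) gives $\sqrt{A^\rmt A}\,dM_t^\rmt\,\sqrt{S_t} = A^\rmt\,dW_t^\rmt X_t$; substituting into the expression for $dS_t$ shows that $(S,M) = (X^\rmt X, M)$ solves (\ref{eq_soup}) on $[0,T)$.

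Finally, (\ref{eq_soup}) is precisely the Wishart SDE (\ref{eq_wp}) with $Q = \sqrt{A^\rmt A}\in GL(p)$ (hence $Q^\rmt Q = A^\rmt A$), $K = B$ and $\al = n\geq p+1$; so Theorem \ref{theorem_exwp1} gives pathwise uniqueness up to $T$, by Remark \ref{remark_pu} our solution $(X^\rmt X, M)$ is therefore strong, and Theorem \ref{theorem_exwp} (under the standing Assumption on the Girsanov exponential) yields $T=\infty$, so that $M$ is a Brownian motion on all of $\bbr_+$ and $(X^\rmt X, M)$ is the unique strong solution of (\ref{eq_soup}) in $\cals_p^+$. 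I expect the only delicate point to be the bookkeeping around the lifetime $T$: $M$ is a priori only defined while $S_t$ is invertible, so one must argue on $[0,T)$ first and only afterwards invoke the existence theorem to identify $T$ with $\infty$; the remaining steps are routine matrix It\^o calculus.
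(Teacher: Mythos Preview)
Your proposal is correct and follows essentially the same approach as the paper: define $M_t=\int_0^t(\sqrt{S_s})^{-1}X_s^\rmt\,dW_s\,A(\sqrt{A^\rmt A})^{-1}$, verify via L\'evy's Theorem that $M\thicksim\calbm_p$ through the identities $P_tP_t^\rmt=I_p$ and $R^\rmt R=I_p$, apply matrix partial integration to obtain $dS_t$, and then invoke pathwise uniqueness (Remark~\ref{remark_pu}(ii) with Theorem~\ref{theorem_exwp}) to upgrade the weak solution to the unique strong one. Your explicit handling of the lifetime $T$ before appealing to Theorem~\ref{theorem_exwp} is in fact slightly more careful than the paper's presentation, which tacitly uses invertibility of $S_t$ from the outset.
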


Note that the class of stochastic differential equations of the form (\ref{eq_soup}) is exactly the class of Wishart SDEs (\ref{eq_wp}) with $Q=\sqrt{A^\rmt A}\in\cals_p^+$, $K=B\in\calm_p(\bbr)$ and\\
$\al=n\in\{p+1,p+2,\ldots\}$. Thus, everything we have established in the foregoing section is still valid for the subclass of SDEs (\ref{eq_soup}).

\begin{proof}
We define
\[ S_t := X_t^\rmt X_t \quad \fa t\in\bbr_+ \]
and
\[ M_t := \int_0^t \sqrt{S_s^{-1}}X_s^\rmt\,dW_s\,A(\sqrt{A^\rmt A})^{-1} \in\calm_p(\bbr)\quad \fa t\in\bbr_+ \]
The matrix square root $\sqrt{A^\rmt A}$ is positive definite and therefore invertible. Now we show that $M$ is a Brownian motion. Because of
\beao
& & E\left( \int_0^t (\sqrt{S_s^{-1}}X_s^\rmt A(A^\rmt A)^{-\frac{1}{2}})^\rmt (\sqrt{S_s^{-1}}X_s^\rmt A(A^\rmt A)^{-\frac{1}{2}})\,ds \right) \\
&= & E\left( \int_0^t (A^\rmt A)^{-\frac{1}{2}} A^\rmt X_s S_s^{-1} X_s^\rmt A (A^\rmt A)^{-\frac{1}{2}} \,ds \right) \\
&= & \int_0^t (A^\rmt A)^{-\frac{1}{2}} A^\rmt A (A^\rmt A)^{-\frac{1}{2}} \,ds \\
&= & t I_p < \infty \textnormal{ a.s.}
\eeao
$M_t$ is a local martingale. Furthermore, observe that
\[ dM_{t,ij} = \sum_{m,n} (\sqrt{S_t^{-1}}X_t)_{im} \,dW_{t,mn}\, (A(\sqrt{A^\rmt A})^{-1})_{nj} \]
and
\beao
d[M_{ij},M_{kl}]_t & = & \sum_{m,n} (\sqrt{S_t^{-1}}X_t)_{im} (\sqrt{S_t^{-1}}X_t)_{km} (A(\sqrt{A^\rmt A})^{-1})_{nj} (A(\sqrt{A^\rmt A})^{-1})_{nl} \,dt \\
& = & (\sqrt{S_t^{-1}}X_t X_t^\rmt \sqrt{S_t^{-1}})_{ik} ( (\sqrt{A^\rmt A})^{-1} A^\rmt A (\sqrt{A^\rmt A})^{-1} )_{jl} \\
& = & (I_p)_{ik} (I_p)_{jl} \,dt \\
& = & \mathbf{1}_{\{i=k\}} \mathbf{1}_{\{i=l\}}  \,dt
\eeao
where we used that
\[ d[W_{mn},W_{m',n'}]_t=dt \Leftrightarrow m=m',\, n=n' \textnormal{ , and zero otherwise} \]
With Theorem \ref{theorem_levy} we con conclude that $M$ is a Brownian motion.\\
Finally, using the partial integration formula shows us that our pair $(S,M)$ is a solution of the stochastic differential equation (\ref{eq_soup}).
\beao
dS_t & = & d(X_t^\rmt X_t) = (dX_t)^\rmt\,X_t + X_t^\rmt\,(dX_t) + d[X^\rmt,X]_t^M \\
& = & (B^\rmt X_t^\rmt\,dt + A^\rmt\,dW_t^\rmt)\,X_t + X_t^\rmt\,(X_tB\,dt + dW_t\,A) + A^\rmt\,d[W_t^\rmt,W_t]_t^M\,A \\
& = & B^\rmt S_t\,dt + A^\rmt\,dW_t\,X_t + S_tB\,dt + X_t^\rmt\,dW_t\,A + A^\rmt\,d[W_t^\rmt,W_t]_t^M\,A\\
& = & (B^\rmt S_t + S_tB)\,dt + A^\rmt\,dW_t\,X_t + X_t^\rmt\,dW_t\,A + A^\rmt\,d[n I_p t]^M A \\
& = & (B^\rmt S_t + S_tB)\,dt + (A^\rmt A)^{\frac{1}{2}}\,dM_t^\rmt\,S_t^{\frac{1}{2}} + S_t^{\frac{1}{2}}\,dM_t\,(A^\rmt A)^{\frac{1}{2}} + nA^\rmt A\,dt \\
& = & S_t^{\frac{1}{2}}\,dM_t\,(A^\rmt A)^{\frac{1}{2}} + (A^\rmt A)^{\frac{1}{2}}\,dM_t^\rmt\,S_t^{\frac{1}{2}} + (B^\rmt S_t + S_tB)\,dt + nA^\rmt A\,dt
\eeao
where we used that
\[ d[W_t^\rmt,W_t]_{t,ij}^M= \sum_{k=1}^n d[W_{t,ki},W_{t,kj}]_t = n \mathbf{1}_{\{i=j\}} dt \]
So far we have shown that $(S,M)$ is a weak solution of (\ref{eq_soup}). From Theorem \ref{theorem_exwp} we know that pathwise uniqueness holds for (\ref{eq_soup}), and thus by Remark \ref{remark_pu} (ii) we have that $(S,M)$ is also a strong solution of (\ref{eq_soup}).
\end{proof}

\begin{Theorem}[Conditional Distribution of the Square Ornstein-Uhlenbeck Process]\ \\
Let $n\in\{p+1,p+2,\ldots\}$, $A\in GL(p)$, $s_0\in\cals_p^+$ and $B\in\calm_p(\bbr)$ with $0\notin-\sigma(B)-\sigma(B)$. The solution of (\ref{eq_soup}) has the conditional distribution 
\beqq S_t|s_0\thicksim\calw_p(n,\Sigma_t,\Sigma_t^{-1}e^{B^\rmt t} s_0 e^{Bt}) \label{cd_soup}\eeqq
with
\beqq \Sigma_t= \cala^{-1}(A^\rmt A)-\cala^{-1}(e^{B^\rmt t} A^\rmt A e^{Bt} ) \eeqq
where $\cala^{-1}$ is the inverse of $\cala:\cals_p\rightarrow\cals_p,X\mapsto-B^\rmt X-XB$.
\label{theorem_dswp}\end{Theorem}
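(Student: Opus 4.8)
The plan is to deduce the conditional law of $S_t$ from the facts already established for the matrix variate Ornstein--Uhlenbeck process, using the identity $S=X^\rmt X$ supplied by Theorem \ref{soup}. Since $s_0\in\cals_p^+$, I would first fix an $n\times p$ matrix $x_0$ with $x_0^\rmt x_0=s_0$ --- for instance the matrix whose top $p\times p$ block is $\sqrt{s_0}$ and whose remaining $n-p$ rows vanish --- and put $X\thicksim\caloup_{n,p}(A,B,x_0)$. By Theorem \ref{soup} the unique strong solution of (\ref{eq_soup}) is $S=X^\rmt X$, so computing the law of $S_t$ given $S_0=s_0$ amounts to computing the law of $X_t^\rmt X_t$ given $x_0$; one should note in passing that the final answer will depend on $x_0$ only through $x_0^\rmt x_0=s_0$, so the particular choice of $x_0$ is immaterial.

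Next I would invoke Theorem \ref{theorem_doup}: the hypothesis $0\notin-\sigma(B)-\sigma(B)$ is exactly what that theorem requires, and it gives
\[ X_t\,|\,x_0\ \thicksim\ \caln_{n,p}\bigl(x_0 e^{Bt},\, I_n\otimes\Sigma_t\bigr),\qquad \Sigma_t=\cala^{-1}(A^\rmt A)-\cala^{-1}(e^{B^\rmt t}A^\rmt A e^{Bt}), \]
with $\cala:X\mapsto-B^\rmt X-XB$, which is precisely the $\Sigma_t$ appearing in the statement. Transposing with Lemma \ref{lemma_normaltranspose} then yields $X_t^\rmt\,|\,x_0\thicksim\caln_{p,n}\bigl(e^{B^\rmt t}x_0^\rmt,\,\Sigma_t\otimes I_n\bigr)$. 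Because $n\geq p+1\geq p$, Lemma \ref{lemma_wd} applies to the $p\times n$ matrix $X_t^\rmt$ (with its ``$\Sigma$'' equal to $\Sigma_t$ and its ``$M$'' equal to $e^{B^\rmt t}x_0^\rmt$) and gives
\[ S_t = X_t^\rmt (X_t^\rmt)^\rmt\ \thicksim\ \calw_p\bigl(n,\,\Sigma_t,\,\Sigma_t^{-1}(e^{B^\rmt t}x_0^\rmt)(e^{B^\rmt t}x_0^\rmt)^\rmt\bigr) = \calw_p\bigl(n,\,\Sigma_t,\,\Sigma_t^{-1}e^{B^\rmt t}s_0 e^{Bt}\bigr), \]
which is (\ref{cd_soup}).

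There is no deep obstacle here; the step that needs care is the bookkeeping of the Kronecker-product conventions --- in $\caln_{n,p}(\cdot,\Sigma\otimes\Psi)$ the left factor is the $n\times n$ ``row'' covariance and the right factor the $p\times p$ ``column'' covariance, so one genuinely has to pass through the transpose of Lemma \ref{lemma_normaltranspose} before Lemma \ref{lemma_wd} (which wants $I_n$ in the right slot) can be applied. I would also record explicitly that $\Sigma_t$ is positive definite for $t>0$: as noted in the proof of Theorem \ref{theorem_doup} it equals $\int_0^t e^{-B^\rmt s}A^\rmt A e^{-Bs}\,ds$, an integral over a nondegenerate interval of matrices in $\cals_p^+$ (this is where $A\in GL(p)$ enters), so $\Sigma_t^{-1}$ and the noncentral Wishart law in (\ref{cd_soup}) are well defined. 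The degenerate endpoint $t=0$, where $\Sigma_0=0$ and $S_0=s_0$ deterministically, is read off directly and needs no argument.
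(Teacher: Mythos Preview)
Your proof is correct and follows exactly the same route as the paper: represent $S$ as $X^\rmt X$ via Theorem \ref{soup}, read off the normal law of $X_t$ from Theorem \ref{theorem_doup}, transpose with Lemma \ref{lemma_normaltranspose}, and apply Lemma \ref{lemma_wd}. One tiny slip in your side remark: the covariance $\Sigma_t$ is $e^{B^\rmt t}\bigl(\int_0^t e^{-B^\rmt s}A^\rmt A e^{-Bs}\,ds\bigr)e^{Bt}=\int_0^t e^{B^\rmt u}A^\rmt A e^{Bu}\,du$, not the unconjugated integral you wrote, but this does not affect your positive-definiteness conclusion.
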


\begin{proof}
The solution $S=X^\rmt X$ is given by a square Ornstein-Uhlenbeck process.
Theorem \ref{theorem_doup} shows that
\[ X_t|x_0 \thicksim\caln_{n,p}(x_0e^{Bt},I_n \otimes \Sigma_t ) \]
with
\[ \Sigma_t= \cala^{-1}(A^\rmt A)-\cala^{-1}(e^{B^\rmt t} A^\rmt A e^{Bt} ) \]
From Lemma \ref{lemma_normaltranspose} we know
\[ X_t^\rmt |x_0 \thicksim\caln_{p,n}(e^{B^\rmt t}x_0^\rmt,\Sigma_t \otimes I_n ) \]
Using Lemma \ref{lemma_wd} we achieve
\[ X_t^\rmt X_t |x_0 = X_t^\rmt (X_t^\rmt)^\rmt |x_0 \thicksim\calw_p(n,\Sigma_t,\Sigma_t^{-1}e^{B^\rmt t}x_0^\rmt x_0e^{Bt}) \]
i.e.
\[ S_t|s_0\thicksim\calw_p(n,\Sigma_t,\Sigma_t^{-1}e^{B^\rmt t} s_0 e^{Bt}) \]
\end{proof}

\begin{Theorem}[Stationary Distribution of the Square Ornstein-Uhlenbeck Process]\ \\
Let $n\in\{p+1,p+2,\ldots\}$, $A\in GL(p)$, $s_0\in\cals_p^+$ and $B\in\calm_p(\bbr)$ with $Re(\sigma(B))\subseteq (-\infty,0)$. Then the solution of (\ref{eq_soup}) has a stationary limiting distribution, that is
\beqq
\calw_p(n,\cala^{-1}(A^\rmt A),0)
\eeqq
where $\cala^{-1}$ is the inverse of $\cala:\cals_p\rightarrow\cals_p,X\mapsto-B^\rmt X-XB$.
\end{Theorem}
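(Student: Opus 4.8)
The plan is to let $t\to\infty$ in the conditional distribution supplied by Theorem \ref{theorem_dswp}, to check that the parameters of the noncentral Wishart law converge to $(\cala^{-1}(A^\rmt A),0)$, to deduce weak convergence of $S_t$ from L\'evy's Continuity Theorem together with the characteristic function of Theorem \ref{cfwishart}, and finally to upgrade the limiting law to a stationary one by invoking the Markov property, exactly as at the end of the proof of Theorem \ref{statd_oup}.

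First I would collect the ingredients. By Theorem \ref{soup} the solution of (\ref{eq_soup}) is $S=X^\rmt X$ with $X\thicksim\caloup_{n,p}(A,B,x_0)$ and $x_0^\rmt x_0=s_0$; since $Re(\sigma(B))\subseteq(-\infty,0)$ implies $0\notin-\sigma(B)-\sigma(B)$, Theorem \ref{theorem_dswp} applies and gives $S_t\mid s_0\thicksim\calw_p(n,\Sigma_t,\Theta_t)$ with $\Sigma_t=\cala^{-1}(A^\rmt A)-\cala^{-1}(e^{B^\rmt t}A^\rmt A e^{Bt})$ and $\Theta_t=\Sigma_t^{-1}e^{B^\rmt t}s_0e^{Bt}$. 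From the proof of Theorem \ref{statd_oup} we have $\lim_{\tto}e^{Bt}=0$; since $\cala^{-1}$ is a continuous linear operator on the finite-dimensional space $\cals_p$, it follows that $\Sigma_t\to\Sigma_\infty:=\cala^{-1}(A^\rmt A)$. Moreover $\Sigma_\infty$ is positive definite (it equals $\int_0^\infty e^{B^\rmt s}A^\rmt A e^{Bs}\,ds$, which is positive definite because $A\in GL(p)$), so $\Sigma_t^{-1}\to\Sigma_\infty^{-1}$ stays bounded and, $e^{B^\rmt t}$ and $e^{Bt}$ both vanishing, $\Theta_t\to 0$.

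Next I would pass to the limit in the characteristic function. By Theorem \ref{cfwishart},
\[ \wh{P^{S_t}}(Z)=\det(I_p-2i\Sigma_t Z)^{-\frac{n}{2}}\,\etr\bigl[i\Theta_t(I_p-2i\Sigma_t Z)^{-1}\Sigma_t Z\bigr],\qquad Z\in\calm_p(\bbr). \]
For each fixed $Z$ we have $\det(I_p-2i\Sigma_t Z)^{-n/2}\to\det(I_p-2i\Sigma_\infty Z)^{-n/2}$, while the bracketed matrix, written as $\Sigma_t^{-1}e^{B^\rmt t}s_0e^{Bt}(I_p-2i\Sigma_t Z)^{-1}\Sigma_t Z$, tends to $0$ since $e^{B^\rmt t},e^{Bt}\to 0$ and all remaining factors stay bounded. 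Hence $\wh{P^{S_t}}(Z)\to\det(I_p-2i\Sigma_\infty Z)^{-n/2}=:f(Z)$, which by Theorem \ref{cfwishart} (now with noncentrality $0$) is precisely the characteristic function of $\calw_p(n,\Sigma_\infty,0)$, and $f$ is continuous at $Z=0$ with $f(0)=1$. L\'evy's Continuity Theorem then gives $P^{S_t}\limw\calw_p(n,\cala^{-1}(A^\rmt A),0)$. (Alternatively one may apply Theorem \ref{statd_oup} to obtain $X_t\limw\caln_{n,p}(0,I_n\otimes\cala^{-1}(A^\rmt A))$ and combine Lemmas \ref{lemma_normaltranspose} and \ref{lemma_wd} with the continuous mapping theorem for $Y\mapsto Y^\rmt Y$.)

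Finally, to see that the limit is a \emph{stationary} distribution: under the standing Assumption, Theorem \ref{theorem_exwp} provides a global unique strong $\cals_p^+$-valued solution of (\ref{eq_soup}) for every initial value in $\cals_p^+$, so $T=\infty$ for all $s_0$ and Theorem \ref{markovsolution} shows that $S$ is a time-homogeneous Markov process on $\cals_p^+$; since the limiting law $\calw_p(n,\cala^{-1}(A^\rmt A),0)$ does not depend on $s_0$, it is stationary, exactly as argued at the end of the proof of Theorem \ref{statd_oup}. I expect the only delicate point to be the bookkeeping in the characteristic-function limit — in particular that $\Theta_t\to 0$, which hinges on $\cala^{-1}(A^\rmt A)$ being invertible so that $\Sigma_t^{-1}$ stays bounded, and checking continuity of $f$ at the origin so that L\'evy's theorem is applicable; everything else is routine.
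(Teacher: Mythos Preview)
Your proposal is correct and follows essentially the same route as the paper's own proof: take the conditional Wishart law from Theorem \ref{theorem_dswp}, pass to the limit in the characteristic function of Theorem \ref{cfwishart} using $e^{Bt}\to 0$ from the proof of Theorem \ref{statd_oup}, apply L\'evy's Continuity Theorem, and conclude stationarity via the Markov property (Theorem \ref{markovsolution}). If anything you are more careful than the paper, which simply asserts $\Theta_t\to 0$ without your justification that $\Sigma_\infty$ is invertible so $\Sigma_t^{-1}$ stays bounded.
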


\begin{proof}
From (\ref{cd_soup}) and Theorem \ref{cfwishart} we know for the solution $S$ of (\ref{eq_soup}) that $S_t$ given $S_0$ has characteristic function
\beqq \wh{P^{S_t}}=\det(I_p-2i\Sigma_t Z)^{-\frac{n}{2}} etr[i\Theta_t(I_p-2i\Sigma_t Z)^{-1}\Sigma_t Z] \eeqq
with
\[ \Sigma_t= \cala^{-1}(A^\rmt A)-\cala^{-1}(e^{B^\rmt t} A^\rmt A e^{Bt} ) \]
and
\[ \Theta_t= \Sigma_t^{-1}e^{B^\rmt t} s_0 e^{Bt} \]
From the proof of Theorem \ref{statd_oup} we know that
\[ Re(\sigma(B))\subseteq (-\infty,0) \Rightarrow \lim_{\tto} \exp(Bt)=0 \]
and 
\[ \lim_{\tto} \Sigma_t=\cala^{-1}(A^\rmt A) \]
Thus
\[ \lim_{\tto} \Theta_t=0 \]
and
\[ \lim_{\tto} \wh{P^{S_t}}=\det(I_p-2i\, \cala^{-1}(A^\rmt A)\, Z)^{-\frac{n}{2}}=:f(Z) \]
With L\'evy's Continuity Theorem it exist a probability measure $\mu$ such that $\wh{\mu}(Z)=f(Z)$ for all $Z\in\calm_p(\bbr)$ and $P^{S_t}\limw\mu$. According to Remark \ref{centralwishart} and Theorem \ref{cfwishart}, $f$ is the characteristic function of a random matrix with central Wishart distribution $\calw_p(n,\cala^{-1}(A^\rmt A),0)$.
Hence, $S$ has limit distribution $\calw_p(n,\cala^{-1}(A^\rmt A),0)$ regardless of any initial value $s_0$. As $S$ is a Markov process (Theorem \ref{markovsolution}) we conclude that $\calw_p(n,\cala^{-1}(A^\rmt A),0)$ is its stationary distribution.
\end{proof}

In the case where $B\in\cals_p^-$, and the matrices $A^\rmt A$ and $B$ commute, it holds that $\cala^{-1}(A^\rmt A)=-\frac{1}{2}A^\rmt A B^{-1}$ and the stationary limiting distribution is $\calw_p(n,-\frac{1}{2}A^\rmt A B^{-1},0)$.

%%%%%%%%%%%%%%%%%%%%%%%%%%%%%%%%%%%%%%%%%%%%%%%%%%
\newpage
\section{Simulation of Wishart Processes}

\label{simwis}

Recall the stochastic differential equation of the Wishart process, that is
\[ dS_s=\sqrt{S_s}\,dB_s\,Q+Q^\rmt\,dB_s^\rmt\,\sqrt{S_s}+(S_s K+K^\rmt S_s+\al Q^\rmt Q)\,ds,\quad S_0=s_0 \]
For every $t\in\bbr_+,\,h>0$ integration over the interval $[t,t+h]$ yields
\beqq S_{t+h}=S_t+\int_t^{t+h} \sqrt{S_s}\,dB_s\,Q + \int_t^{t+h} Q^\rmt\,dB_s^\rmt\,\sqrt{S_s} + \int_t^{t+h} S_s K\,ds + \int_t^{t+h} K^\rmt S_s \,ds + \al Q^\rmt Q h \eeqq
We now try to approximate the stochastic integral above to make the Wishart process suitable for numerical simulation. An easy way of doing this, is the Euler-Maruyama method (see \citet[p. 340]{kloeden}):
\beqq \wh{S}_{t+h}=\wh{S}_t+ \sqrt{\wh{S}_t}(B_{t+h}-B_t)Q + Q^\rmt (B_{t+h}^\rmt-B_t^\rmt)\sqrt{\wh{S}_t} + ( \wh{S}_t K +K^\rmt \wh{S}_t+\al Q^\rmt Q )h \label{disWP}\eeqq
We call $\wh{S}$ the discretized Wishart process. As the Brownian motion has stationary, independent increments, we
know that the distribution of $B_{t+h}-B_t$ is $\caln_p(0,h I_{2p})$ and is independent of all previous increments. Hence, we can use (\ref{disWP}) to simulate $\wh{S}$ for any fixed step size $h>0$. Then we get a process on the mesh $(0,h,2h,\ldots,T)$ for any $T\in\bbr_+$, that is $(\wh{S}_0,\wh{S}_h,\wh{S}_{2h},\ldots,\wh{S}_T)$.\\
However, even under the Assumptions of Theorem \ref{theorem_exwp}, this discretized Wishart process can become negative definite such that we have to stop our simulation before we reach $T$, because the square root in (\ref{disWP}) is not well-defined anymore.\\
To solve this , we observe that $\wh{S}\rightarrow S$ for $h\rightarrow 0$. Thus, we can expect $\wh{S}$ to remain positive semidefinite as long as $h$ is sufficiently small. Hence, we introduce a variable step size to our algorithm:\\
Suppose we have already given the discretized Wishart process $(\wh{S}_0,\wh{S}_h,\wh{S}_{2h},\ldots,\wh{S}_t)$ and the discretized Brownian motion $(0,B_h,B_{2h},\ldots,B_t)$ up to a time $t<T-h$. Suppose further that we calculate $\wh{S}_{t+h}$ (and thus $B_{t+h}$) according to (\ref{disWP}) and that $\wh{S}_{t+h}$ is negative definite, i.e. (at least) its smallest eigenvalue becomes negative. Then, we cut our step size by half, calculate $\wh{S}_{t+\frac{h}{2}}$ and check again if the smallest eigenvalue of $\wh{S}_{t+\frac{h}{2}}$ is negative. We continue this iteratively, until (hopefully) $\wh{S}_{t+\frac{h}{2^n}}$ is positive semidefinite or, the step size falls under a certain value, say $\approx 2.2\times 10^{-16}$ (that is the constant \emph{eps} in MATLAB). In the last case, the step size converges to zero.\\
In order to get the value $\wh{S}_{t+\frac{h}{2}}$, we need to draw $B_{t+\frac{h}{2}}$ conditionally on the given values $(0,B_h,B_{2h},\ldots,B_t,B_{t+h})$. Because of the Markov property of the Brownian motion (see Theorem \ref{markovsolution}), this is the same as drawing $B_{t+\frac{h}{2}}$ conditionally on $(B_t,B_{t+h})$. For now, we only consider the $i,j-th$ entry $B_{ij}$ of $B$. From \citet[p.84]{glasserman} we know that
\beao B_{ij,t+\frac{h}{2}}| (B_{ij,t}=x,B_{ij,t+h}=x+y) & \eqd & \frac{\frac{h}{2}x+\frac{h}{2}(x+y)}{h}+\sqrt{\frac{\frac{h}{2}\frac{h}{2}}{h}}Z_{ij} \\ 
& = & x+\frac{1}{2}y+\frac{\sqrt{h}}{2}Z_{ij} 
\eeao
where $Z_{ij}\thicksim\caln_1(0,1)$ and $\eqd$ denotes distributional equivalence. Written in matrix notation, we get for the increment of $B$
\beqq (B_{t+\frac{h}{2}}-B_t)| (B_{t}=X,B_{t+h}=X+Y) \eqd \frac{1}{2}Y+\frac{\sqrt{h}}{2}Z \eeqq
where $Z\thicksim\caln_{p,p}(0,I_{2p})$. A sample implementation can be found in the appendix. Now we give a few examples.

\begin{Example}
First we begin with the one-dimensional square Bessel process, i.e. a solution of
\[ dX_t=2\sqrt{X_t}\,d\beta_t+\al\,dt, \quad X_0=x_0:=0.5 \]
where $\beta$ denotes a one-dimensional Brownian motion. From Theorem \ref{theorem_expupbesq} we know that this process never becomes negative for any choice of $\al,x_0\geq 0$ and remains positive for $x_0>0$ and $\al\geq 2$. Here are two sample paths over the interval $[0,1]$ and initial step size $h=10^{-3}$:
\includegraphics{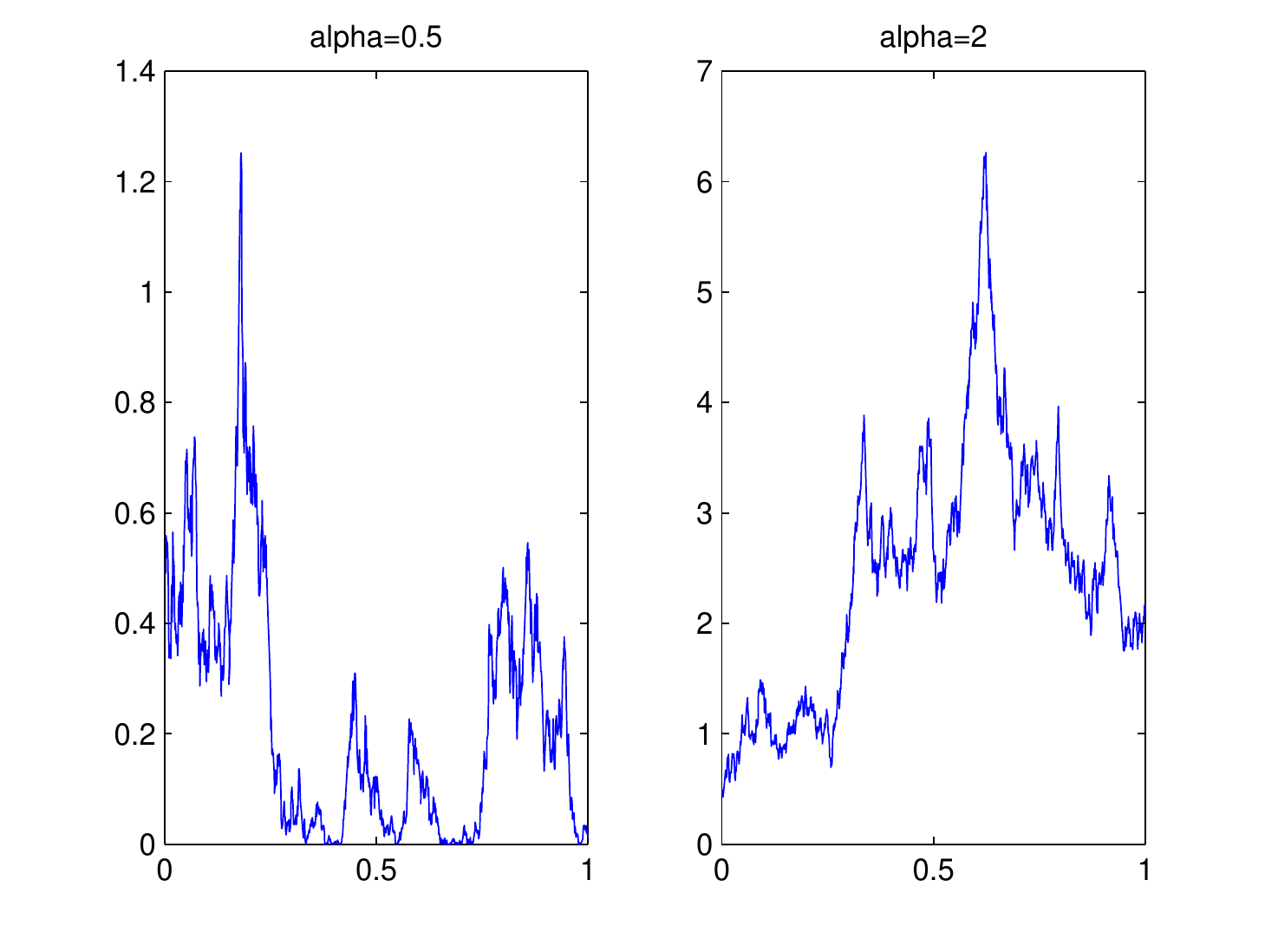}\\
In the case $\alpha=0.5$, the algorithm had to reduce the initial step size to $1.5625\cdot10^{-5}$ at at least one point in order to guarantee non-negativity, whereas for $\alpha=2$ this was not necessary. Observe that in the case $\alpha=0.5<2$, the square Bessel process can become zero, but gets reflected instantaneously.
\end{Example}

\begin{Example} 
Our next example is the 2-dimensional Wishart process
\[ dS_t=\sqrt{S_t}\,dB_t\,Q+Q^\rmt dB_t^\rmt \sqrt{S_t}+(S_t K+K^\rmt S_t+ \al Q^\rmt Q)\,dt,\quad S_0=s_0 \]
with a 2-dimensional Brownian motion $B$, $\al=3$, $Q=(\begin{smallmatrix} 1 & 2 \\ 0 & -3 \end{smallmatrix})$, $K=-4I_2$ and $s_0=I_2$. We show sample paths for the three different entries of $S$, that are $S_{11}$, $S_{22}$ and $S_{12}$, and for $\frac{S_{12}}{S_{11}^2+S_{22}^2}$, what would be the correlation in a stochastic volatility model.\\
\includegraphics{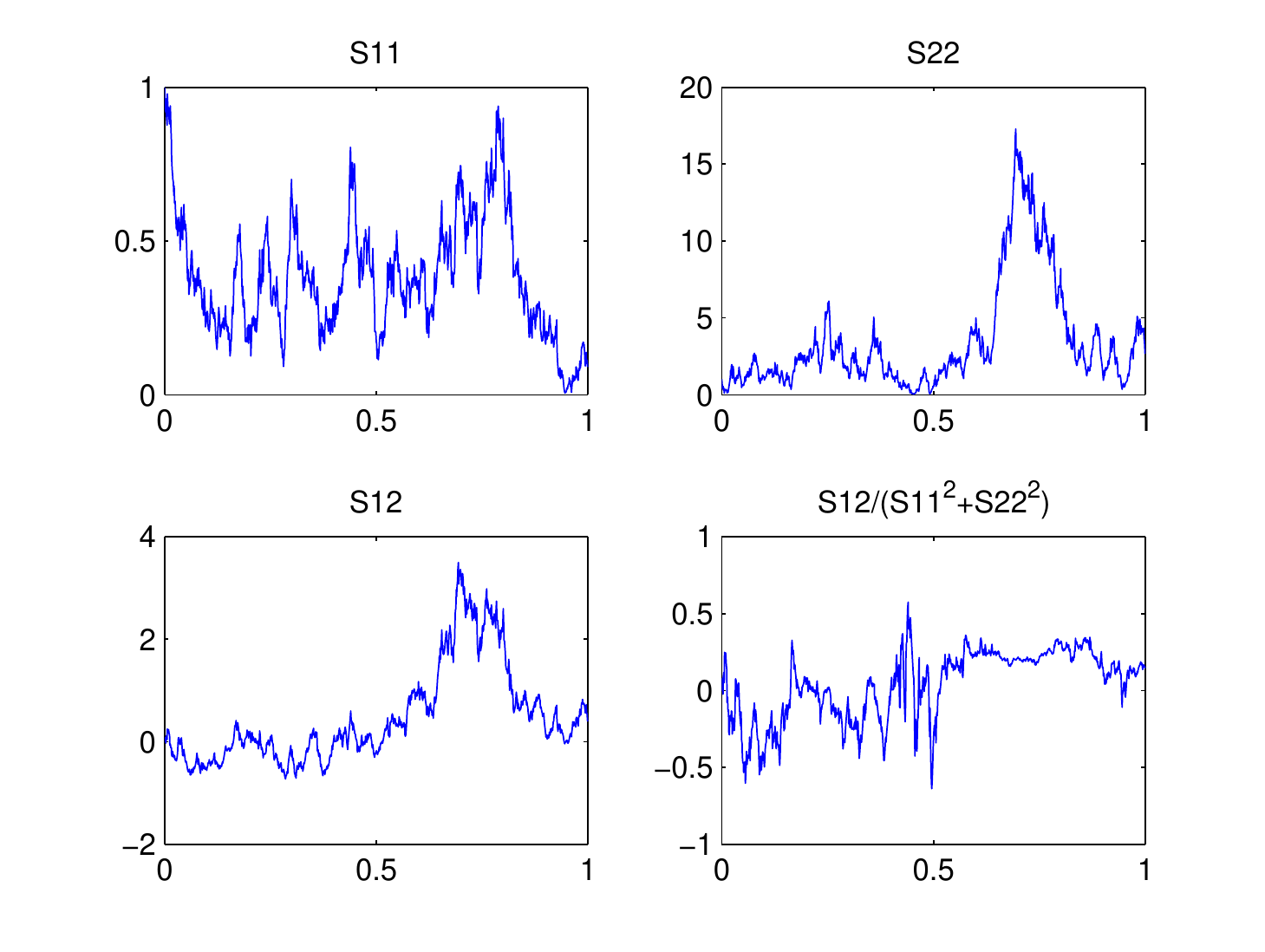}\\
The algorithm had to reduce the initial step size of $h=10^{-3}$ by half at some points. We have\\
\includegraphics[scale=0.8]{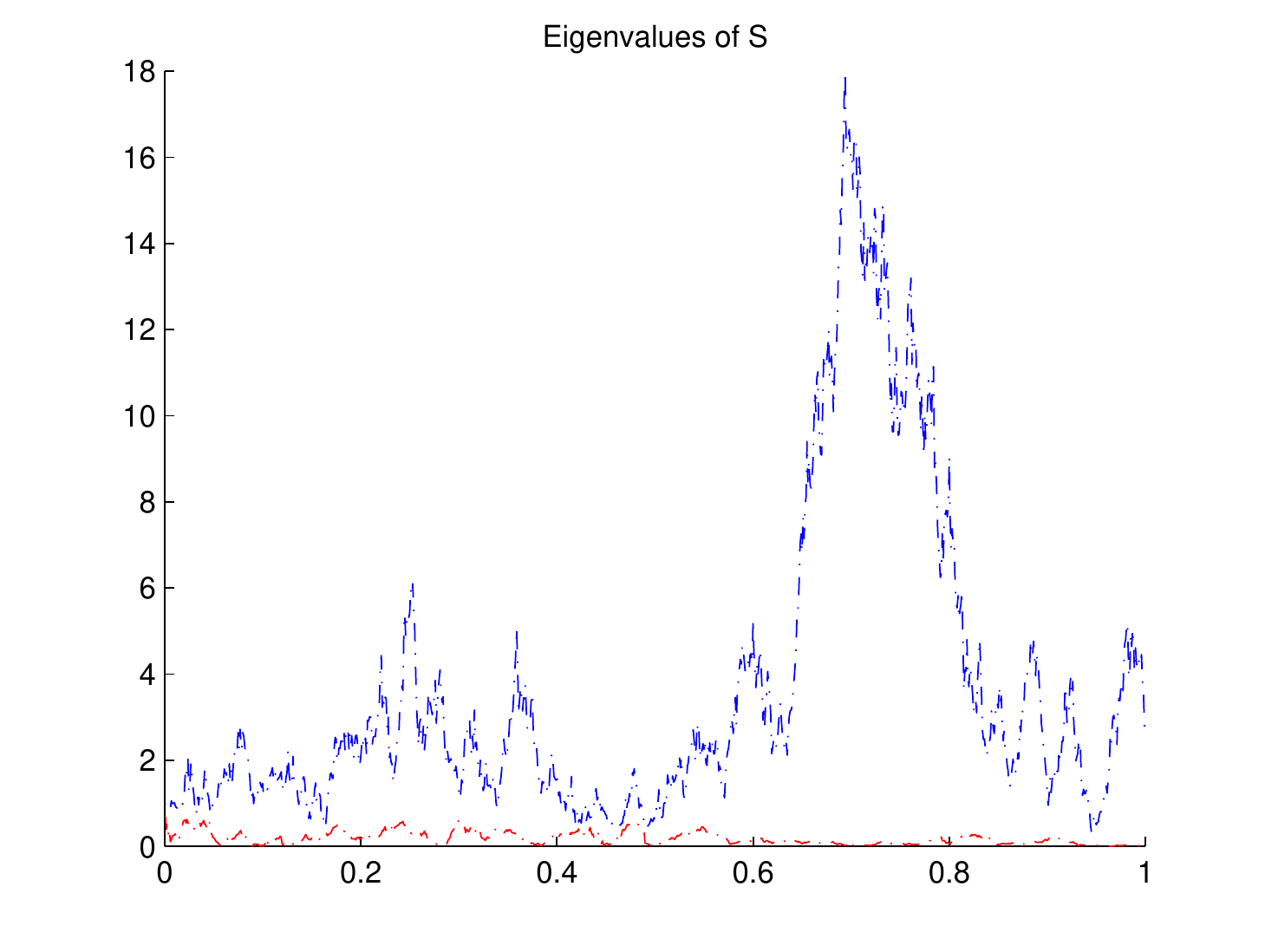}\\
for the eigenvalues of $S$.
\end{Example}

\begin{Example}
The last example are two 5-dimensional solutions of the stochastic differential equation
\beqq dS_t=\sqrt{S_t}\,dB_t+ dB_t^\rmt \sqrt{S_t} + (-4 S_t + \al_i )\,dt,\quad S_0=0.1\cdot I_5, i=1,2 \label{wp_example3}\eeqq
For the first one, we have $\al_1=7.2$, and the smallest and the largest eigenvalue of $S$ look like\\
\includegraphics{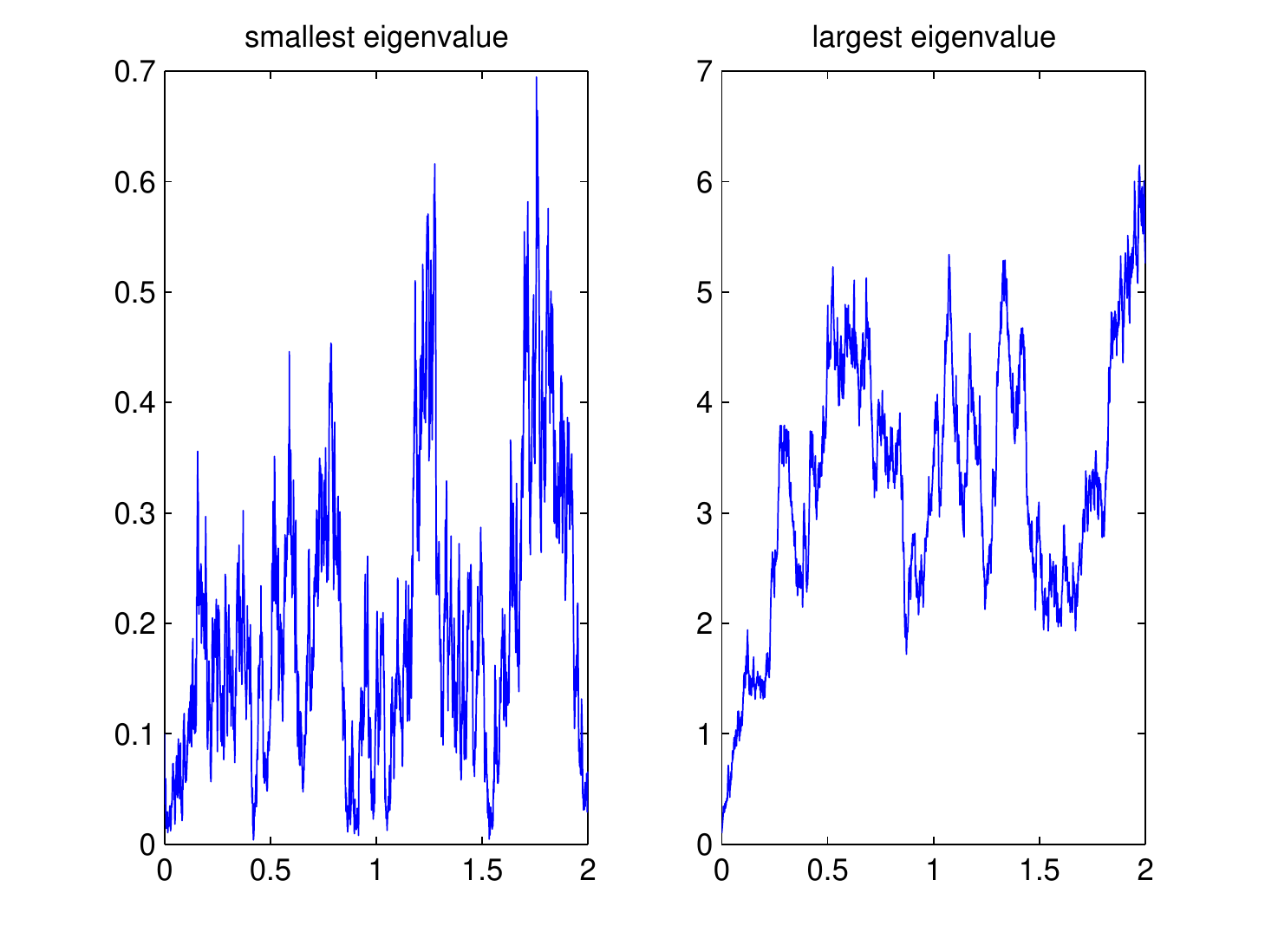}\\
In the second case, $\al_2=3.5$, we don't know if there exists any solution of (\ref{wp_example3}) for $i=2$. Thus, we do not know whether our simulated process corresponds to a solution of (\ref{wp_example3}) for $i=2$, and if it does, it does not have to be a Wishart process by Definition \ref{def_wp} (because we demand the Wishart process to be a strong solution). It may be noted, that the algorithm had to reduce the initial step size of $h=10^{-3}$ to $7.8125\cdot 10^{-6}$ at some points. Again, we shall have a look at the eigenvalues of our simulated process:\\
\includegraphics{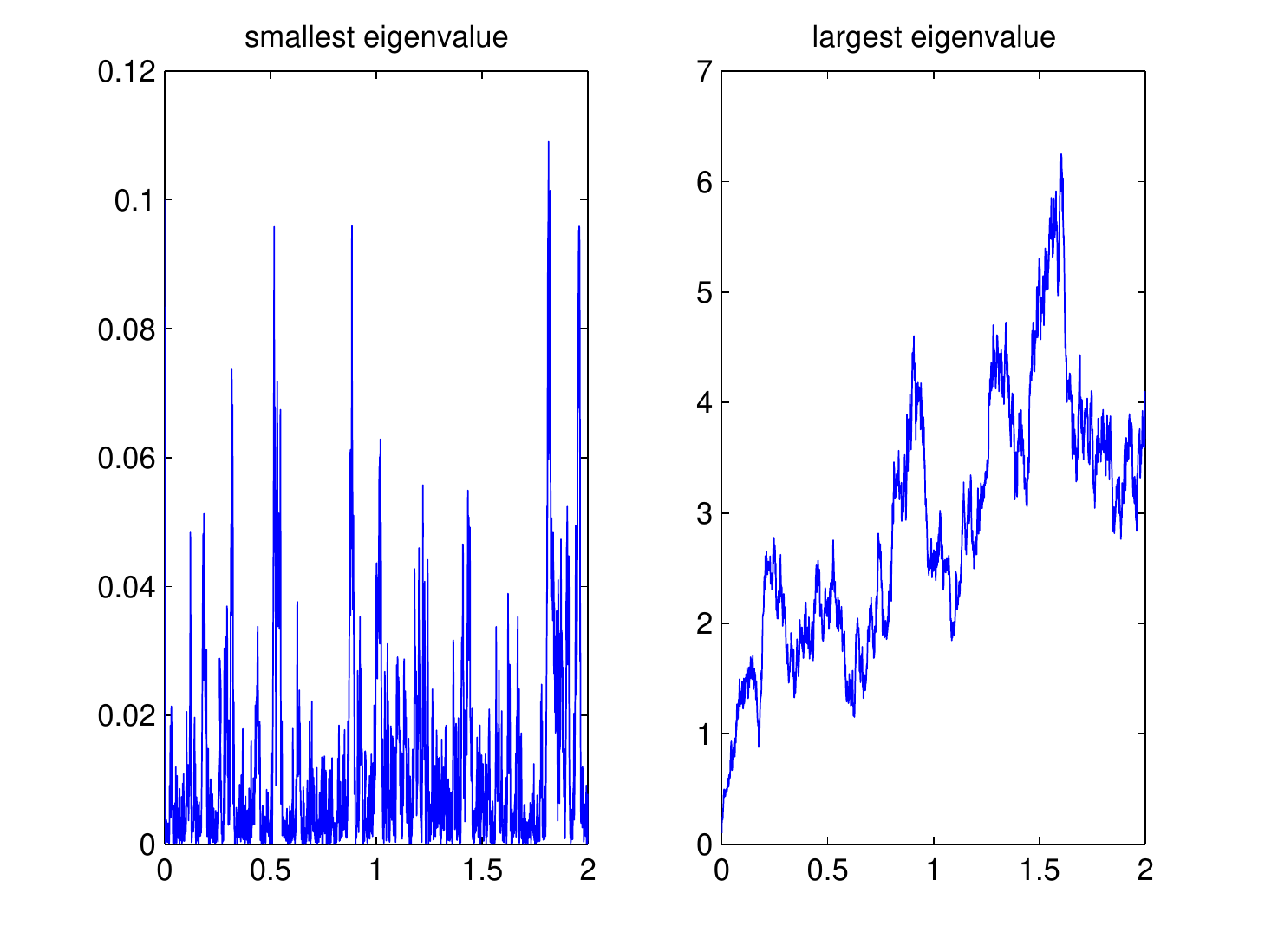}
\end{Example}

\chapter{Financial Applications}

We first focus on two well-known financial models that are based on the one-dimensional Wishart processes, that is in fact a generalized squared Bessel process. Later, we consider the actual matrix variate case.

%%%%%%%%%%%%%%%%%%%%%%%%%%%%%%%%%%%%%%%%%%%%%%%%%%
\section{CIR Model}\label{cir}

According to Theorem \ref{theorem_exwp}, we get for $p=1$, $q> 0$, $k\in\bbr$, $\al\geq p+1=2$, $s_0>0$ and an one-dimensional Brownian motion $B$ that the stochastic differential equation
\[ dS_t=2q\sqrt{S_t}\,dB_t+(2kS_t+\al q^2)\,dt,\quad S_0=s_0 \]
has a unique, strong and positive solution in $(0,\infty)$. For $k<0$ and the new parametrization $\sigma=2q$, $a=-2k$ and $b=\frac{-\al q^2}{2k}$ the stochastic differential equation gets the form
\beqq dS_t=\sigma\sqrt{S_t}\,dB_t+a(b-S_t)\,dt \label{eq_cir}\eeqq
that is the stochastic differential equation of the CIR process. It is mean reverting as $a>0$ with  $b\geq\frac{-q^2}{k}$.\\
As \citet[p.122]{glasserman} has shown, $S_t|s_0$ is distributed as $\frac{\sigma^2(1-\exp(-at))}{4a}$ times a\\ noncentral chi-square random variable with $\frac{4ab}{\sigma^2}$ degrees of freedom and noncentrality parameter $\frac{4a\exp(-at)}{\sigma^2(1-\exp(-at))}s_0$.\\
From now on we follow \citet[p.184f.]{gou}. Let's assume that the interest rate $r$ follows a stochastic differential equation of the form (\ref{eq_cir}),
\beqq dr_t=\sigma\sqrt{r_t}\,dW_t+a(b-r_t)\,dt \label{eq_cir2}\eeqq
where $W$ is a Brownian motion under a risk-neutral probability measure $Q$. Then, the price of a zero-coupon bond at time $t$ with time to maturity $h$ is given by
\beqq B(t,t+h)=E_t^Q\left[\exp\left(-\int_t^{t+h}r_{\tau}\,d\tau\right)\right] \label{eq_zcb}\eeqq
where $E_t^Q$ denotes the conditional expectation $E^Q[\cdot|\sigma\{r_s:\,s\leq t\}]$ under the measure $Q$.
Then \citet{cox} have shown that
\[ B(t,t+h)=\exp(-f(h)r_t-g(h)] \]
with functions
\beao
f(h) & = & \frac{2}{c+a}-\frac{4c}{(c+a)[(c+a)\exp(ch)+c-a]} \\
g(h) & = & -\frac{ab(c+a)h}{\sigma^2}+\frac{2ab}{\sigma^2}\ln\left(\frac{(c+a)\exp(ch)+c-a}{2c}\right)
\eeao
with $c:=\sqrt{a^2+2\sigma^2}$.\\
Hence, we have a closed form solution for $B(t,t+h)$ that is exponential affine in $r_t$.

%%%%%%%%%%%%%%%%%%%%%%%%%%%%%%%%%%%%%%%%%%%%%%%%%%
\section{Heston Model}
A process $S$ of the form (\ref{eq_cir}) can also be used to model the volatility in a stochastic volatility Black-Scholes model according to Heston
\[ d(\ln(X_t))=\mu_t\,dt+\sqrt{S_t}\,dW_t \]
where the stock price at time $t$ is denoted by $X_t$. We refer to \citet[Chapter 2.2.2.]{gou} for details.

%%%%%%%%%%%%%%%%%%%%%%%%%%%%%%%%%%%%%%%%%%%%%%%%%%
\section{Factor Model for Bonds}
In contrast to section \vref{cir} where the risk-free rate $r$ followed a stochastic differential equation of the form (\ref{eq_cir2}), we now want to use a factor model to consider corporate bonds jointly with a long term government bond (e.g. T-bond). Again, we summarize the results of \citet[chapter 3.5.2.]{gou}.\\
Denote by $\lambda_{i,t}$ the default intensity for firm $i$, $i=1,\ldots,K$.
Let us assume that
\beam
r_t & = & c+\tr(CS_t) \label{eq_r}\\
\lambda_{i,t} & = & d_i+\tr(D_iS_t) \label{eq_lambda}\quad\fa i=1,\ldots,K
\eeam
where $c,d_i\geq 0$ are nonnegative, $C,D_i\in\cals_p^+$ and $S\thicksim\calwp_p(Q,K,\al,s_0)$, i.e.
\beqq dS_t=\sqrt{S_t}dB_tQ+Q^\rmt dB_t^\rmt \sqrt{S_t}+(KS_t+S_tK^\rmt+ \al Q^\rmt Q)dt,\quad S_0=s_0 \label{finance_wishart}\eeqq
under the conditions of Theorem \ref{theorem_exwp}.\\
Equation (\ref{eq_r}) is the factor representation for the risk-free rate and equation (\ref{eq_lambda}) the factor representation for the corporate bonds.\\
Observe, that $\tr(CS_t)>0$ for all $t$. Indeed, as $C\in\cals_p^+$ there exists an orthogonal matrix such that $C=UDU^\rmt$ where $D$ is a diagonal matrix of eigenvalues $\mu_k\geq 0$. Denote by $u_i\in\bbr^p$, $i=1,\ldots,p$, the columns of $U$. Then we have $C=\sum_{k=1}^p\mu_k u_k u_k^\rmt$ and thus
\[ \tr(CS_t)=\sum_{k=1}^p\mu_k\tr(u_k u_k^\rmt S_t)=\sum_{k=1}^p\mu_k u_k^\rmt S_t u_k \geq 0 \]
With the convention $\lambda_{0,t}=0$ we get for the price $B_0$ of a zero-coupon bond and the prices of corporate bonds $B_i$ the formula
\beqq B_i(t,t+h)=E_t^Q\left[\exp\left(-\int_t^{t+h}r_{\tau}+\lambda_{i,\tau}\,d\tau\right)\right] \quad\fa i=0,\ldots,K \label{eq_zcb2}\eeqq
under a risk neutral measure $Q$.
If we insert (\ref{eq_r}) and (\ref{eq_lambda}) into (\ref{eq_zcb2}) we get
\beqq B_i(t,t+h)=\exp(-h(c+d_i)) E_t^Q\left[\etr\left(-(C+D_i)\int_t^{t+h}S_{\tau}\,d\tau\right)\right] \quad\fa i=0,\ldots,K \label{eq_bond}\eeqq
with the convention $d_0,D_0\equiv0$. \citet{gou} shows that there exists a closed form expression for (\ref{eq_bond}).

%%%%%%%%%%%%%%%%%%%%%%%%%%%%%%%%%%%%%%%%%%%%%%%%%%
\section{Matrix Variate Stochastic Volatility Models}
An application for matrix variate stochastic processes can be found in \citet{fonseca}, which model the dynamics of $p$ risky assets by
\beqq dX_t=\diag(X_t)[(r\mathbf{1}+\lambda_t)\,dt+\sqrt{S_t}\,dW_t] \eeqq
where $r$ is a positive number, $\mathbf{1}=(1,\ldots,1)\in\bbr^p$, $\lambda_t$ a $p$-dimensional stochastic process, interpreted as the risk premium, and $Z$ a $p$-dimensional Brownian motion. The volatility process $S$ is the Wishart process of (\ref{finance_wishart}).\\

In contrast to a continous model of the form (\ref{eq_fonseca}), another multivariate stochastic volatility model for the logarithmic stock price process can be given by
\beqq dY_t=(\mu+\Sigma_t\beta)\,dt+\Sigma_t^{\frac{1}{2}}\,dW_t,\quad Y_0=0 \eeqq
where $\mu,\beta\in\bbr^p$, $W$ denotes a $p$-dimensional Brownian motion and $\Sigma$ is given by a Lévy-driven positive semidefinite OU type process, see \cite{stelzer} for details. In this case, the volatility is not continuous anymore and has jumps.\\

If one wants to model the volatility with a time continuous stochastic process, e.g. for economic reasons, one could also suggest the model
\beqq dY_t=(\mu+S_t\beta)\,dt+S_t^{\frac{1}{2}}\,dW_t,\quad Y_0=0 \eeqq
where the process $\Sigma$ was substituted by the Wishart process of (\ref{finance_wishart}).

\appendix

\chapter{MATLAB Code for Simulating a Wishart Process}
This is an implementation in MATLAB of the algorithm described in section \vref{simwis}.

\begin{verbatim}
function [S,eigS,timestep,minh]=Wishart(T,h,p,alpha,Q,K,s_0)
%
% Simulates the p-dimensional Wishart process on the interval [0,T] 
% that follows the stochastic differential equation 
%   dS_t=sqrt{S_t}*dB_t*Q+Q'*dB_t'*sqrt{S_t}+(S_t*K+K'*S_t+alpha*Q'*Q)dt
% with initial condition S_0=s_0.
%
% Method of discretization: Euler-Maruyama
% Starting step size: h
% In order to guarantee positive semidefiniteness of S, the step size will be
% reduced iteratively if necessary.
%
% Output:
% S is a three dimensional array of the discretized Wishart process. 
% eigS is a matrix consisting the eigenvalues of S
% timestep is the vector of all timesteps in [0,T]
% minh is the smallest step size used, i.e. minh=min(diff(timestep))
%
% Author: Oliver Pfaffel
% Email: O.Pfaffel@gmx.de
% June 17, 2008
%
%--------------------------------------------------------------------

horg=h;
minh=h;
timestep=0;

[V_0,D_0]=eig(s_0);
eigS=sort(diag(D_0)');
drift_fix=alpha*Q'*Q;

B_old=0;
B_inc=sqrt(h)*normrnd(0,1,p,p); 
vola=V_0*sqrt(D_0)*V_0'*B_inc*Q;
drift=s_0*K;
S_new = s_0+vola+vola'+(drift+drift'+drift_fix)*h;

[V_new,D_new]=eig(S_new);
eigS=[eigS;sort(diag(D_new)')];

S=cat(3,s_0,S_new);
t=h;
timestep=[timestep;t];

flag=0;

while t+h<T,
    
    B_old=B_old+B_inc;
    B_inc=sqrt(h)*normrnd(0,1,p,p);
    
    S_t=S_new; V_t=V_new; D_t=D_new;
    
    sqrtm_S_t=V_t*sqrt(D_t)*V_t';
    vola=sqrtm_S_t*B_inc*Q;
    drift=S_t*K;
    S_new = S_t+vola+vola'+(drift+drift'+drift_fix)*h;
    
    [V_new,D_new]=eig(S_new);
    
    mineig=min(diag(D_new));
    
    while mineig<0,
        
        h=h/2;
        minh=min(minh,h);
        flag=1;
        
        if h<eps, error('Step size converges to zero'), return, end
        
        B_inc=0.5*B_inc+sqrt(h/2)*normrnd(0,1,p,p);

        vola=sqrtm_S_t*B_inc*Q;
        drift=S_t*K;
        S_new = S_t+vola+vola'+(drift+drift'+drift_fix)*h;

        mineig=min(eig(S_new));
        
    end
    
    if flag==0, 
        eigS=[eigS;sort(diag(D_new)')];
        S=cat(3,S,S_new);
        t=t+h;
        timestep=[timestep;t];
    end
    
    if flag==1,
        [V_new,D_new]=eig(S_new);
        eigS=[eigS;sort(diag(D_new)')];
        S=cat(3,S,S_new);
        t=t+h;
        timestep=[timestep;t];
        flag=0;
        h=horg;
    end      
    
end

h_end=T-t;

if h_end>0,
    
    B_inc=sqrt(h_end)*normrnd(0,1,p,p);
    
    S_t=S_new; V_t=V_new; D_t=D_new;

    vola=V_t*sqrt(D_t)*V_t'*B_inc*Q;
    drift=S_t*K;
    S_new = S_t+vola+vola'+(drift+drift'+drift_fix)*h;
    
    S=cat(3,S,S_new);
    eigS=[eigS;sort(eig(S_new)')];
    timestep=[timestep;T];
    
end

\end{verbatim}

% \bibliography{references}

\end{document}